\newcommand{\Tfrac}[2]{%
  \ooalign{%
    $\genfrac{}{}{1.2pt}1{#1}{#2}$\cr%
    $\color{white}\genfrac{}{}{.4pt}1{\phantom{#1}}{\phantom{#2}}$}%
}
\newcommand{\Dfrac}[2]{%
  \ooalign{%
    $\genfrac{}{}{1.2pt}0{#1}{#2}$\cr%
    $\color{white}\genfrac{}{}{.4pt}0{\phantom{#1}}{\phantom{#2}}$}%
}
\tikzstyle{bull}=[circle,draw=black,fill=black!80]
\tikzstyle{holl}=[circle,draw=black]
\def\MT@register@subst@font{\MT@exp@one@n\MT@in@clist\font@name\MT@font@list
   \ifMT@inlist@\else\xdef\MT@font@list{\MT@font@list\font@name,}\fi}
\def\@listi{\leftmargin\leftmargini
              \parsep 0\p@ \@plus2\p@
              \topsep 2\p@ \@plus1\p@
              \itemsep 2\p@ \@plus1\p@}
\let\@listI\@listi
\newtheorem{theorem}{Theorem}[section]
\newtheorem{lemma}[theorem]{Lemma}
\newtheorem{proposition}[theorem]{Proposition}
\newtheorem{corollary}[theorem]{Corollary}
\theoremstyle{definition}
\newtheorem{definition}[theorem]{Definition}
\newtheorem{example}[theorem]{Example}
\newcommand{\la}{\langle}
\newcommand{\ra}{\rangle}
\newcommand{\Al}[1][A]{\ensuremath{\mathbf{#1}} }
 \newcommand{\K}{\mathbb{K}}
\renewcommand{\phi}{\varphi}
\newcommand{\A}{\Al}
 \newcommand{\Fm}{\Al[Fm]}
 \newcommand{\DNE}{\mathsf{DN}}
 \newcommand{\Eq}{\mathsf{E}} 
 \newcommand{\SDM}{\mathbb{SDM}}
 \newcommand{\sdm}{\mathsf{SDM}}
 \newcommand{\OC}{\mathbb{O}}
 \newcommand{\DN}{\mathbb{DN}}
 \newcommand{\BA}{\mathbb{B}}
 \newcommand{\DM}{\mathbb{DM}} 
 \newcommand{\PDL}{\mathbb{PL}}
  \newcommand{\VV}{\mathbb{V}}
 \newcommand{\PP}{\mathcal{P}}
 \newcommand{\Alg}{\mathsf{Alg}}
 \let\Omega=\varOmega
 \let\Gamma=\varGamma
 \let\Lambda=\varLambda
 \let\Sigma=\varSigma
 \let\Psi=\varPsi
 \let\Delta=\varDelta
 \let\Pi=\varPi
 \let\Theta=\varTheta
\newcommand{\Ru}{\mathcal{R}}
\newcommand{\Su}{\mathcal{S}}
\newcommand{\ru}{\mathsf{r}}
\newcommand{\Leibniz}{{\boldsymbol{\varOmega}}}
\newcommand{\Mt}{\mathbb{M}}
\newcommand{\nnot}{\mathop{\sim}}                  % not
\renewcommand{\neg}{\nnot}
\newcommand{\alg}[1]{\mathbf{#1}}                  % algebras
 \let\vDash=\models
\let\epsilon=\Phi
\newcommand{\eval}[2][\right]{\relax
   \ifx#1\right\relax \left.\fi#2#1\rvert}
\newcommand{\tuple}[1] {{\langle #1\rangle}}
\begin{document}

\title{ 
Finite axiomatizability of logics \linebreak
of distributive lattices with negation
%Axiomatizing logics of \linebreak
%distributive lattices with 
 %negation
  }

\author{S\'ergio Marcelino}
\address{
SQIG - Instituto de Telecomunica\c c\~oes,
Departamento de Matem\'atica, Instituto Superior T\'ecnico, 
Universidade de Lisboa,  Lisboa, Portugal
}
\email{sergiortm@gmail.com}
%\thanks{The second author was supported by the Universities UK International Rutherford Fund Strategic Partnership Grant 2018/19, held at the University of Birmingham.}
%
\author{Umberto Rivieccio}
\address{Departamento de Inform\'atica e Matem\'atica Aplicada,
Universidade Federal do Rio Grande do Norte,
Natal (RN), Brasil}
\email{urivieccio@dimap.ufrn.br}
\thanks{Research funded by FCT/MCTES through national funds and when applicable co-funded by EU under the project UIDB/EEA/50008/2020
and by the Conselho Nacional de Desenvolvimento Científico e Tecnol\'ogico (CNPq, Brazil), under the grant 313643/2017-2 (Bolsas de Produtividade em Pesquisa - PQ)}

%\date{\today}

\maketitle

\begin{abstract}
This paper focuses on  order-preserving logics defined from varieties
of distributive lattices with negation, and in particular on the problem of
whether these can be axiomatized 
%the issue of axiomatizability 
by means of finite Hilbert calculi. 
On the side of negative results, we  provide a syntactic condition on the equational presentation
of a variety that entails 
%We provide a sufficient syntactic condition on the equational presentation for 
 failure of finite axiomatizability for the corresponding logic.
%We establish, in particular, 
%and we use it to show 
An application of this result is that
the logic of all distributive lattices with negation is not finitely axiomatizable;
likewise, %another consequence %of our  general result 
%is that 
we establish that
the order-preserving  logic %associated to 
of the variety of all Ockham algebras is also
 not finitely axiomatizable.
 On the positive side, we show that
 an arbitrary subvariety of semi-De Morgan algebras 
is axiomatized by a finite number of equations
 if and only if
 the corresponding order-preserving logic is
  axiomatized by a finite Hilbert calculus.
This equivalence also holds for every subvariety of a
 Berman variety of Ockham algebras.
We obtain, as a corollary, a new proof that the implication-free
  fragment of intuitionistic logic is finitely axiomatizable, as well as a new 
  Hilbert calculus for it. 
  Our proofs are constructive in that they 
  allow us to effectively convert an equational presentation
  of a variety of algebras into
a Hilbert calculus for the corresponding order-preserving logic, and viceversa. 
 We also consider the assertional %companions of the order-preserving
  logics associated to the above-mentioned varieties,
 showing in particular that %result is then generalized to 
the assertional logics of finitely axiomatizable subvarieties of semi-De Morgan algebras are finitely axiomatizable as well.
\end{abstract}

%Abstract: We show that  semi-de morgan logic is not finitely axiomatizable, 
%furthermore, we show this "bad" behaviour is shared with its most famous extension: the $\neg,\land,\lor$-fragment of intuitionistic logic!

 %\subsection{Axiomatization}~\\
 
 \section{Introduction}
 \label{sec:intro}
 
 In the present paper, we study logics associated to subvarieties of %the variety 
 the class $\DN$ of \emph{distributive lattices with negation}  (Definition~\ref{def:dn}) considered
 for instance in the papers~\cite{Ce99,Ce07}. 
$\DN$ is a variety that  includes many well-known classes of algebras of non-classical logics,
such as (semi-)De Morgan algebras, Stone algebras, pseudo-complemented distributive lattices
and Ockham algebras,
 therefore providing  a common semantical framework for the study of the corresponding logics.

 We will be mostly concerned with the order-preserving logics associated 
 to the above-mentioned varieties, focusing in particular  on the issue of 
 whether
 they can be axiomatized or not by means of a Hilbert calculus
 consisting of finitely many rule schemata; if this is the case,
 the logic will be called \emph{finitely based}.
 
On the side of negative results, 
we are going to show that the order-preserving logic associated to the variety 
$\DN$ 
is not finitely based;
the same holds for the order-preserving logic of all Ockham algebras (Definition~\ref{def:sdmock}).
 %{\ {red}
 Indeed, we will give a syntactic criterion regarding the equations that axiomatize
 (relatively to  $\DN$)
a variety $\VV \subseteq \DN$    
% a subvariety of 
 %{\ {blue} over 
% $\DN$ 
 %} 
 implying that the same holds for the corresponding logic.  
% {\ {blue}, covering in particular also Ocham variety.}
 %showing that order-preserving logic of Ocham variety is not finitely based}.
 %}.
On the positive side, we will 
show how to obtain a finite Hilbert calculus that is complete with respect to 
 the logic
of semi-De Morgan algebras, entailing  that the latter is finitely based.
% {\ {red}Moreover, we show that the same holds for the logic associated to every finitely axiomatizable subvariety of semi-De Morgan algebras}.
% {\ {red} (as all the varieties finitely axiomatizable over it)}, and  will introduce a finite Hilbert calculus for it.
The same techniques  will allow us to obtain finite calculi for the logics associated to
 so-called Berman varieties of Ockham algebras~\cite{Ber77}.
As a corollary of our results, we will also obtain a finite axiomatization 
for the logic of pseudo-complemented distributive lattices (i.e.~the implication-free fragment
of intuitionistic logic) alternative to the one introduced in~\cite{ReV94}.

Our proof strategies 
are discussed in more detail in Sections~\ref{sec:dn} and \ref{sec:sdm}, but we 
give here an introductory account on the  
finite axiomatizability problem for order-preserving logics and the difficulties one faces.
First of all, let us clarify the meaning of the terms ``order-preserving logic'' and ``finite Hilbert calculus''. 
% {\ {blue} shouldn't this sentence be after the next paragraph?}

Let $\K$ be a class (say, a variety)
of algebras such that each algebra $\A \in \K$ has a bounded lattice
reduct $\la A; \land, \lor, \bot, \top \ra$
%with top element $1$.
One of the standard ways of associating 
a (finitary) Tarskian logic $ \vdash^{\leq}_{\K} $  to $\K$  is the following.
One lets $\emptyset \vdash^{\leq}_{\K}  \phi$ if and only if
the equation $\phi \approx \top$ is valid in $\K$ and,
for all $\Gamma \cup \{ \phi \} \subseteq Fm$ such that $\Gamma \neq \emptyset$, 
one lets
$
\Gamma \vdash^{\leq}_{\K} \phi
$
iff
there is a natural number $n$ and formulas
$\gamma_1, \ldots, \gamma_n \in \Gamma$
%are
%is a finite set $\Gamma' = \{ \gamma_1, \ldots, \gamma_n \} \subseteq \Gamma$
such that 
the equation 
$  \gamma_1 \land \ldots \land \gamma_n \land \phi \approx \phi $
is valid in $\K$. 
%, where 
%$\bigwedge \Gamma' := \gamma_1 \land \ldots \land \gamma_n$.
%This is the 
Thus $\vdash^{\leq}_{\K} $ is by definition a finitary logic,
called
the \emph{order-preserving logic} of the class $\K$.
Note that
%, if $\K$ is a variety of algebras having a bounded distributive lattice reduct (as is the case of $\DN$), then 
$\vdash^{\leq}_{\K} $ coincides with the logic defined by the class of matrices
$%\CMt =  
\{ \la \A, F \ra  :  \A \in \K, \, F \subseteq A \text{ is a non-empty lattice filter of } \A %\text{ or } F = \emptyset 
\}$.
Other logics may of course be defined from $\K$, for instance, the class of matrices
$\{ \la \A, \{ \top \} \ra  :  \A \in \K
\}$ also determines a (stronger) logic associated to %the variety 
$\K$.
Following~\cite{Ja06a}, we call   this 
 the \emph{$\top$-assertional logic} of the class of algebras $\K$
(denoted $\vdash^{\top}_{\K} $)
and will be considered in Section~\ref{sec:top}.

By a \emph{Hilbert calculus} we mean a logical calculus whose every rule schema
is a pair 
%$\la \Gamma, \phi \ra$ 
$\frac{\Gamma}{\phi}$
where $\Gamma$ is a finite (possibly empty) set of formulas and $\phi$ is a formula;
we say that such a calculus is \emph{finite} when it consists of finitely many rule schemata. 
Following~\cite[p.~607]{R91}, we call a
logic that can be axiomatized by a finite Hilbert  \emph{finitely based}. 
%in~\cite[p.~607]{R91}.
In~\cite[Sec.~2.1]{SDMWollic} the authors introduce a finite calculus for the order-preserving logic
of the variety $\SDM$ of semi-De Morgan algebras (Definition~\ref{def:sdmock}). This, however,
is not a Hilbert calculus \emph{stricto sensu}, because it involves sequent-style rule schemata
such as the following: from $\la \phi, \psi \ra$ infer $\la \nnot \psi, \nnot \psi \ra$. The ``axioms'' of the calculus introduced in~\cite{SDMWollic}, on the other hand, are examples of what are usually called (single-premiss)  %formula-to-formula
 Hilbert rules. 
Finite Hilbert calculi for the order-preserving logics
of De Morgan algebras ($\DM$) and pseudo-complemented distributive lattices $(\PDL)$
%$\vdash^{\leq}_{\DM} $ and $\vdash^{\leq}_{\PDL} $ 
can be found
in the papers~\cite{F97,ReV94}. We note in this respect 
that $\vdash^{\top}_{\PDL} = \, \vdash^{\leq}_{\PDL} $, while %the logic
$\vdash^{\top}_{\DM} $
%{\ {blue} which is the well-known Belnap-Dunn logic}, 
is strictly stronger  than $\vdash^{\leq}_{\DM} $,  
which is the well-known Belnap-Dunn logic.
%{\ {blue} which is the ETL!}
$\vdash^{\top}_{\DM} $ is the \emph{Exactly True Logic} introduced
and axiomatized by means of a Hilbert calculus in~\cite{PiRi}; see also~\cite{Ri,AlPrRi}%see~\cite{Ri,AlPrRi} for a Hilbert axiomatization of $\vdash^{\top}_{\DM} $
.

 A closer look at the order-preserving logic $\vdash^{\leq}_{\SDM} $
 associated to
 semi-De Morgan  algebras 
 explains the choice of a hybrid calculus in~\cite{SDMWollic},
as well as the challenge one faces when trying to axiomatize $\vdash^{\leq}_{\K} $ (for %each class 
$\K \subseteq \DN$) by means of a calculus that is Hilbert in  the  strict sense. %/} our  restricted sense. 
%{\ {red} maybe avoid Hilbert, when it is simply an }. 
In fact, the consequence relation of 
each order-preserving logic $\vdash^{\leq}_{\K} $
%semi-De Morgan logic 
corresponds to the lattice order
on $\K$,
%on semi-De Morgan algebras, 
in the sense that one has
$\phi \vdash^{\leq}_{\K} \psi$ if and only if the inequality $\phi \leq \psi$ (taking the latter as a shorthand for the 
equation $\phi \land \psi \approx \phi$) is valid in $\K$. 
Such a partial order relation on each 
$\A \in \K$
%semi-De Morgan algebra 
enjoys certain (meta-)properties that need to be mirrored by the logical calculus.
Indeed, every order-preserving logic $\vdash^{\leq}_{\K}$
is selfextensional (see Section~\ref{sec:dn}); moreover, observe that, 
%{\ {blue}Related to self-extensionality! }
%This entails, 
%for instance, 
%that,
if $\K \vDash \phi \leq \psi$, then $\K \vDash \nnot \psi \leq \nnot \phi$, but also
$\K \vDash \phi \lor \gamma \leq \psi \lor \gamma$ for every $\gamma \in Fm$, and so on. 

In~\cite{SDMWollic}, the above meta-properties are imposed %forced to hold 
by adding suitable sequent-style rule schemata
such as the one mentioned above (from $\la \phi, \psi \ra$ infer $\la \nnot \psi, \nnot \phi \ra$).
As is well known, pure Hilbert calculi (\emph{stricto sensu}) 
lack the expressive power needed to directly impose %ensure directly 
such (meta-)properties,
%are known to be quite poor tools for 
%ensuring  such properties, 
which is one of the  reasons of   interest in more expressive (e.g.~sequent-style) calculi.   
%In other respects, 
However, Hilbert calculi also allow for more fine-grained analyses of logics and,
being very close to the algebraic semantics, they are  generally better suited for the study of logics from an algebraic point of view
 (see e.g.~\cite[p.~414]{F97}).
%this weakness of Hilbert calculi makes them suitable for  more fine-grained 
%analyses of logical systems, for they allow one to see the difference between logics   

A first approach to the above-mentioned axiomatizability problem  suggests the following strategy. Take a basic set of Hilbert rule schemata
$\Ru$ (which are sound w.r.t.~$\vdash^{\leq}_{\K} $) and recursively close it  under the application of rule schemata
as follows: whenever $\la \phi, \psi \ra \in \Ru$, add to $\Ru$ also %the schemata 
$\la \nnot \psi, \nnot \phi \ra $,
$\la \phi \lor \gamma, \psi \lor \gamma \ra $, etc. 
%It is not difficult to show that 
Such a process is indeed bound to succeed, and allows one to
 show that the derivability relation $\vdash_\Ru$  thus obtained coincides with
$\vdash^{\leq}_{\K} $. The non-trivial question is  whether some \emph{finite}
subset %of schemata 
$\Ru_0 \subseteq \Ru$ also suffices or not. 
The main result of the present paper consists in providing a  sufficient condition for the negative result to hold
  as well as
a few   conditions that are sufficient for ensuring a positive answer.
%both regarding semi-De Morgan logic and the implication-free fragment of intuitionistic logic; 
As we shall see, the answer relies crucially on the soundness of certain
rule schemata.
%; all the positive results are obtained by applications of what is
%essentially the same method. 

%{\ {blue} 
%%put diagram/table summarizing the main results
%
%%
%%we need to introduce the notions of:
%
%%selfextensional logic (somewhere?)
%
%we are in the non-algebraizable (even non-protoalgebraic) setting...therefore 
%the isomorphisms between
%logics and subvarieties are not to be taken for granted,
%nor the immediate possibility of translating between (finite) equational basis for $\VV$ and a (finite) axiomatizations of
%$\vdash^\leq_\VV$.
%
%
%
%
%%preliminaries in universal algebra (equations, varieties)
%
%%logical preliminaries: \emph{logical matrix}, used in proofs
% 
%% \emph{substitution} ($\sigma$)
% 
%% given a set of rules $\Ru$, the consequence $\vdash_{\Ru}$ determined by $\Ru$
%}

We note for the algebraic logician that the logics considered in the present paper are not algebraizable in the sense of Blok and Pigozzi,
and indeed they are easily shown to be non protoalgebraic either (see e.g.~\cite{FJa09} for the relevant definitions).
This is one of the challenges of our study, for 
one cannot rely on the existence of the translations between
equations and formulas  that are provided by the general theory of algebraizable logics.
Thus, %no standard recipe is available
in this setting,
there is no standard recipe 
for obtaining a Hilbert axiomatization
of a given logic from
% turning
an equational presentation of %a given class of algebras into
%a Hilbert axiomatization
%of 
the corresponding %logic.
class of algebras.
Also, no isomorphism is readily available between (say)
the lattice of subquasivarieties of $\DN$
and the lattice of finitary extensions of $\vdash^{\leq}_{\DN} $ (but see Theorem~\ref{thm:ramon} in Section~\ref{sec:var}).

The paper is organized as follows. Section~\ref{sec:var} collects
the fundamental definitions on algebras and logics, as well as a few useful lemmas.
In Section~\ref{sec:dn} we give a recipe for obtaining a (potentially infinite)
Hilbert axiomatization for the logic $\vdash^{\leq}_{\K} $ for each class $\K \subseteq \DN$.
We investigate conditions entailing that the above-mentioned axiomatization must be infinite, and in particular
we show that $\vdash^{\leq}_{\DN} $ is not finitely based; the same holds for
the logic $\vdash^{\leq}_{\OC} $
of the variety of all Ockham algebras (Definition~\ref{def:sdmock}).
By contrast, we show in Section~\ref{sec:sdm} that, for an arbitrary variety $\K \subseteq \SDM$,
where $\SDM$ is the class of semi-De Morgan algebras (Definition~\ref{def:sdmock}),
the logic $\vdash^{\leq}_{\K} $ is finitely based if and only if $\K$ is axiomatized by a finite
number of equations (in particular, $\vdash^{\leq}_{\SDM} $ is itself finitely based).
In Section~\ref{sec:ber} we adapt our proof techniques to show
that, unlike the whole variety $\OC$ of Ockham algebras, every Berman subvariety $\OC_n^m \subseteq \OC$
% of Ockham algebras 
determines a logic $\vdash^{\leq}_{\OC_n^m} $ that is finitely based. 
In Section~\ref{sec:top}
we briefly consider $\top$-assertional logics associated to varieties of distributive lattices with negation,
showing in particular that $\vdash^{\top}_{\SDM} $ is finitely based. 
Lastly, Section~\ref{sec:conc} contains some
concluding remarks and suggestions for further research. 

\label{th:subva}

 \section{Algebraic and logical preliminaries} 
\label{sec:var}

\subsection{Algebras}

We  adopt the standard conventions and notation of modern universal algebra,
for which we refer the reader to~\cite{BuSa00}. All algebras considered in the present paper
are bounded (distributive) lattices (Definition~\ref{def:lat}) enriched with a unary 
\emph{negation} operation $\neg$ on which different requirements will be imposed,
giving rise to the various classes of interest.  The algebraic (as well as the logical) language $\{ \land, \lor, \neg, \bot,\top\}$, consisting of a conjunction (interpreted as the lattice meet on algebras), a disjunction (the join), a negation and truth constants (the top and bottom of the lattice) 
will stay fixed throughout the paper.
We shall denote by $\Fm$ the algebra of formulas over this language, freely generated 
by a denumerable set of variables (denoted $x,y,z$ etc.), and by $Fm$ the  universe of this algebra. 
We shall mostly be interested in equational classes of algebras, i.e.~\emph{varieties}. 
An \emph{equation} is a pair of algebraic terms $\la \phi, \psi \ra \in Fm \times Fm$,
and
every set $\Eq \subseteq \PP (Fm \times Fm)$ of equations determines a variety which will be denoted by
$\VV_{\Eq}$.

 \begin{definition}[\cite{BuSa00}]
\label{def:lat}
A \emph{bounded
distributive lattice} is an algebra 
$\Al = \la A; \land, \lor,\bot,\top \ra$ of type $\la 2,2,0,0 \ra$
such that 
the following equations are satisfied: 
\begin{enumerate}[(L1)]
\item \label{Itm:L1}  $x \lor y \approx y \lor x$ \qquad $x \land y \approx y \land x$. 
\item \label{Itm:L2}  $x \lor (y \lor z) \approx (x \lor y) \lor z$ \qquad $x \land (y \land z) \approx (x \land y) \land z$. 
\item \label{Itm:L3}  $x \lor x \approx x$ \qquad $x \land x \approx  x$. 
\item \label{Itm:L4}  $x \lor (x \land y) \approx x$ \qquad $x \land (x \lor y) \approx x$. 
\item \label{Itm:L5}  $x \land \bot \approx \bot$ \qquad $x \lor \top \approx \top$. 
\item \label{Itm:L6}  $x \land (y \lor z ) \approx (x \land y) \lor (x \land z)$. 
\end{enumerate}
\end{definition}

 \begin{definition}[\cite{Ce99,Ce07}] %[\cite{Sa87},~Def.~2.2]
\label{def:dn}
A \emph{%(distributive) 
distributive lattice with negation} is an algebra 
$\Al = \la A; \land, \lor, \neg,\bot,\top \ra$ of type $\la 2,2,1,0,0 \ra$
such that $\la A; \land, \lor, \bot, \top \ra$ is a bounded distributive lattice  (Definition~\ref{def:lat})
%with minimum $\bot$ and maximum $\top$ 
and the following equations are satisfied: 
\begin{enumerate}[(N1)]
\item \label{Itm:N1} $\nnot \bot\approx \top$. 
\item $\nnot (x \lor y) \approx  \nnot x \land \nnot y$.
%\item \label{Itm:Dor} 
%$$\nnot \bot\approx \top \qquad \nnot (x \lor y) \approx  \nnot x \land \nnot y$$
%\item \label{Itm:SD2} $\nnot \nnot (x \land y) \approx  \nnot \nnot x \land \nnot \nnot y$. 
%\item \label{Itm:SD3} $\nnot x \approx \nnot \nnot \nnot x$. 
\end{enumerate}
%A \emph{lower quasi-De Morgan lattice} is a semi-De Morgan lattice additionally satisfying $ x \ll  \nnot \nnot x$ 
%(here and in the remainder of the paper, $\alpha \ll \beta$ is a shorthand for the equation $\alpha \approx \alpha \land \beta$).
%
%An  \emph{Ockham lattice} is a semi-De Morgan lattice that also satisfies the other De Morgan equation,
%namely $\nnot (x \land y) \approx  \nnot x \lor \nnot y$. 
%A \emph{De Morgan lattice} is a semi-De Morgan lattice additionally satisfying the \emph{double negation} (or  \emph{involutive}) \emph{law}:
%$ x \approx  \nnot \nnot x$. 
%A  \emph{(semi-)De Morgan algebra} is a (semi-)De Morgan lattice whose lattice reduct is bounded (thus we  include constant symbols $0$ and $1$ in the algebraic signature), in which case we require the following identities to be satisfied: 
%$0 \approx \nnot 1$ and $1 \approx \nnot 0$. The variety of semi-De Morgan algebras will be denoted by $\SDM$.
\end{definition}
We shall denote by $\DN$ the variety of distributive lattices with negation,
and by $\DNE$ the set of equations  axiomatizing this class according to Definition~\ref{def:dn}.

The choice of the class of distributive lattices with negation
as our base variety is due to the following reasons. 
On the one hand,  $\DN$ is sufficiently general to include many algebras of non-classical logics that interest us,
in particular pseudo-complemented distributive lattices and semi-De Morgan algebras (our original case study).
On the other hand, the two items of~Definition~\ref{def:dn} are  some minimal equational requirements  ensuring that the connective $\nnot$
indeed behaves like a negation (in particular, $\nnot$ is order-reversing); also, the theory of
$\DN$ is sufficiently well developed to allow us to rely on a few algebraic lemmas. 
Besides $\DN$, we shall be mainly working with the subvarieties introduced below.

 \begin{definition}[\cite{Sa87}]
\label{def:sdmock}
A %(distributive) 
distributive lattice with negation 
$\Al = \la A; \land, \lor, \neg,\bot,\top \ra$ is: 
\begin{itemize}%[(i)]
\item a \emph{semi-De Morgan algebra}, if $\A$ satisfies the following equations:
\begin{enumerate}[(SDM1)]
\item \label{Itm:SD1} $\nnot \top \approx \bot$. 
\item \label{Itm:SD2} $\nnot \nnot (x \land y) \approx  \nnot \nnot x \land \nnot \nnot y$. 
\item \label{Itm:SD3} $\nnot x \approx \nnot \nnot \nnot x$. 
\end{enumerate}

\item a \emph{De Morgan algebra}, if $\A$ is a semi-De Morgan algebra satisfying:
\begin{enumerate}[(DM)]
\item \label{Itm:DM} $\nnot \nnot x \approx  x$. 
\end{enumerate}

\item a \emph{pseudo-complemented distributive lattice} (\emph{$p$-lattice}, for short), if $\A$ is a semi-De Morgan algebra satisfying:
\begin{enumerate}[(PL)]
\item \label{Itm:PL} $x \land \nnot (x \land y) \approx x \land \nnot y $. 
\end{enumerate}

\item an \emph{Ockham algebra}, if $\A$ satisfies (SDM\ref{Itm:SD1}) plus the following equation:
\begin{enumerate}[(O)]
%\item \label{Itm:O1} $\nnot \top \approx \bot$. 
\item \label{Itm:O1} $\nnot (x \land y) \approx  \nnot x \lor \nnot y$. 
\end{enumerate}
\end{itemize}
\end{definition}
We shall also be interested in the so-called \emph{Berman varieties} of Ockham algebras~\cite{Ber77},
defined via the following terms. Let $\nnot^0 x := x$ and
$\nnot^{n+1} x := \nnot \nnot^{n} x$. For $m \geq 1$ and $n \geq 0$, the variety
$\OC_n^m$ is defined as the subclass of those Ockham algebras that satisfy
the equation $\nnot^{2m+n} x \approx \nnot^n x$. The class of Boolean algebras, viewed as a subvariety of $\DN$, will be denoted by $\BA$; also recall from the preceding Section that $\SDM$, $\DM$ and $\PDL$ denote, respectively,
the variety of semi-De Morgan algebras, De Morgan algebras and  $p$-lattices.
The following inclusions (all proper) hold among the above-defined varieties:
$\BA \subseteq \DM  \subseteq \SDM \subseteq \DN$, 
$\BA \subseteq \DM  \subseteq \OC_n^m \subseteq \OC \subseteq \DN$ and
$\BA \subseteq \PDL \subseteq \SDM \subseteq \DN$.

\begin{figure}[!h]
\centering
%\[
%\lipsum[1]

\adjustbox{scale=1,center, %font=\normalsize
}{
\begin{tikzcd} %[font=\Huge]
%[%transform canvas={scale=.7}] %[scale=0.3]
	& {\Large\mathbb{DN}} \\
	{\mathbb{O}} && {\mathbb{SDM}} \\
	{\mathbb{O}^m_n} \\
	{\mathbb{DM}} && {\mathbb{PL}} \\
	& {\mathbb{B}}
	\arrow[from=2-1, to=1-2, no head]
	\arrow[from=1-2, to=2-3, no head]
	\arrow[from=2-1, to=3-1, no head]
	\arrow[from=2-3, to=4-3, no head]
	\arrow[from=3-1, to=4-1, no head]
	\arrow[from=2-3, to=4-1, no head]
	\arrow[from=4-3, to=5-2, no head]
	\arrow[from=4-1, to=5-2, no head]
\end{tikzcd}
% https://q.uiver.app/?q=WzAsNyxbMSwwLCJcXExhcmdlXFxtYXRoYmJ7RE59Il0sWzAsMSwiXFxMYXJnZVxcbWF0aGJie099Il0sWzIsMSwiXFxMYXJnZVxcbWF0aGJie1NETX0iXSxbMCwyLCJcXExhcmdlXFxtYXRoYmJ7T31ebV9uIl0sWzIsMywiXFxMYXJnZVxcbWF0aGJie1BMfSJdLFswLDMsIlxcTGFyZ2VcXG1hdGhiYntETX0iXSxbMSw0LCJcXExhcmdlXFxtYXRoYmJ7Qn0iXSxbMSwwLCIiLDAseyJzdHlsZSI6eyJoZWFkIjp7Im5hbWUiOiJub25lIn19fV0sWzAsMiwiIiwwLHsic3R5bGUiOnsiaGVhZCI6eyJuYW1lIjoibm9uZSJ9fX1dLFsxLDMsIiIsMix7InN0eWxlIjp7ImhlYWQiOnsibmFtZSI6Im5vbmUifX19XSxbMiw0LCIiLDAseyJzdHlsZSI6eyJoZWFkIjp7Im5hbWUiOiJub25lIn19fV0sWzMsNSwiIiwyLHsic3R5bGUiOnsiaGVhZCI6eyJuYW1lIjoibm9uZSJ9fX1dLFsyLDUsIiIsMix7InN0eWxlIjp7ImhlYWQiOnsibmFtZSI6Im5vbmUifX19XSxbNCw2LCIiLDAseyJzdHlsZSI6eyJoZWFkIjp7Im5hbWUiOiJub25lIn19fV0sWzUsNiwiIiwyLHsic3R5bGUiOnsiaGVhZCI6eyJuYW1lIjoibm9uZSJ9fX1dXQ==
%\[
%\begin{tikzcd}
%	& {\Large\mathbb{DN}} \\
%	{\Large\mathbb{O}} && {\Large\mathbb{SDM}} \\
%	{\Large\mathbb{O}^m_n} \\
%	{\Large\mathbb{DM}} && {\Large\mathbb{PL}} \\
%	& {\Large\mathbb{B}}
%	\arrow[from=2-1, to=1-2, no head]
%	\arrow[from=1-2, to=2-3, no head]
%	\arrow[from=2-1, to=3-1, no head]
%	\arrow[from=2-3, to=4-3, no head]
%	\arrow[from=3-1, to=4-1, no head]
%	\arrow[from=2-3, to=4-1, no head]
%	\arrow[from=4-3, to=5-2, no head]
%	\arrow[from=4-1, to=5-2, no head]
%\end{tikzcd}
%\]
}
\caption{Varieties of distributive lattices with negation, ordered by inclusion.} \label{fig:varieties}
\end{figure}
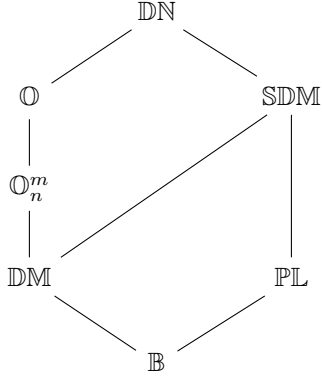

Since its introduction about %This class has been introduced by H.P.~Sankappanavar about
 three decades ago~\cite{Sa87}, semi-De Morgan algebras have been studied
especially in the setting of universal algebra~\cite{Palma2003}
and duality theory~\cite{Ho96,Ce99,Ce07}. On the other hand, %as mentioned earlier, 
a logic associated to semi-De Morgan algebras (here denoted $ \vdash^{\leq}_{\SDM} $) has been first considered
in the recent paper~\cite{SDMWollic}. 
%whose main result is the introduction of a display-type calculus for $ \vdash^{\leq}_{\SDM} $. 
Having been introduced in the late 1970's, Ockham lattices are slightly older than semi-De Morgan algebras;
logics associated to (Berman subvarieties of) Ockham lattices are considered in~\cite{MaLin18,MaLin20}.

 De Morgan algebras (i.e.~involutive semi-De Morgan algebras) are worth mentioning in the present context
especially because of their logical interpretation.
In fact, since the 1970's with the seminal papers by N.~Belnap~\cite{Be76,Be77},
the variety $\DM$ %of De Morgan algebras  
has been associated to and studied as the standard semantics of 
the Belnap-Dunn four-valued logic (see e.g.~\cite{F97}).
Indeed, %denoting by $\DM$ the variety of De Morgan algebras, 
%we have that
the consequence relation
$\vdash^{\leq}_{\DM} $ is precisely the Belnap-Dunn logic 
(on the other hand, $\vdash^{\top}_{\DM} $ is strictly stronger than
$\vdash^{\leq}_{\DM} $).
Sub(quasi)varieties of $\DM$ %De Morgan algebras 
have also been studied from a logical point of view
in the more recent papers~\cite{Ri,Pr,AlPrRi}.
From a technical point of view, we shall also be interested in exploiting the structural relation between semi-De Morgan and De Morgan algebras
stated in Lemma~\ref{lem:fey}.

%Another outstanding subclass of semi-De Morgan algebras is the variety of 
The study of 
%pseudo-complemented distributive lattices (
$p$-lattices %for short),
%whose study 
can be traced back to the 1920's with V.~Glivenko's classical work on intuitionistic logic. 
%Formally,
%a \emph{%(distributive) 
%pseudo-complemented distributive lattice ($p$-lattice)} can be defined as a 
%semi-De Morgan algebra
%that additionally satisfies 
%$x \land \nnot (x \land y) \approx x \land \nnot y$.
%Alternatively,
%a $p$-lattice may be defined as a bounded distributive lattice $\A$ expanded with an extra operation $\nnot$
%that satisfies the following implications: $a \land b = 0$ iff $a \leq \nnot b $, for all $a,b \in A$.
%%
From a logical point of view, the importance of $p$-lattices stems from their relation with intuitionism. 
In fact, it is well known that $p$-lattices are precisely the implication-free subreducts of Heyting algebras:
in logical terms, this entails that  the logic $ \vdash^{\leq}_{\PDL} $, or equivalently 
$ \vdash^{\top}_{\PDL} $ (both defined as in Section~\ref{sec:intro}),
captures the implication-free fragment of intuitionistic logic. 

We end the Section with a few algebraic lemmas that will be used to make sure
that certain rules are sound with respect to particular subclasses of 
$\DN$.
%semi-De Morgan logic.

\begin{lemma}
\label{lem:forleibn}
Let $\A %\in \SDM
$ be a  semi-De Morgan algebra 
and $a,b,c \in A$. Then,
\begin{enumerate}[(i)]
\item $\nnot (a \land b) = \nnot (\nnot \nnot a \land b) = \nnot ( a \land \nnot \nnot b) = \nnot ( \nnot \nnot a \land \nnot \nnot b)  $.
\item $\nnot (\nnot (\nnot a \land b) \land c) \leq  \nnot (a  \land c)$.
\end{enumerate}
\end{lemma}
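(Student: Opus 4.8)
The statement is a pair of algebraic identities/inequalities in an arbitrary semi-De Morgan algebra, so the plan is to derive everything syntactically from the defining equations (SDM1)--(SDM3) together with the lattice axioms and (N1)--(N2) of $\DN$ (in particular the order-reversing behaviour of $\nnot$, which follows from (N\ref{Itm:N2})). The first thing I would record explicitly is the key derived fact that $a \leq \nnot\nnot a$ for all $a$: this comes from (SDM\ref{Itm:SD3}) plus order-reversal. Indeed, from $\nnot x \approx \nnot\nnot\nnot x$ and the fact that $\nnot$ reverses order, one gets that $\nnot\nnot$ is a closure-type operator; more concretely, applying $\nnot$ to the trivial inequality and chasing (SDM\ref{Itm:SD3}) yields $a\leq \nnot\nnot a$. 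I would also note the dual-flavoured monotonicity: $\nnot\nnot$ is monotone (compose two order-reversals), and, crucially, that $\nnot\nnot(a\land b) = \nnot\nnot a \land \nnot\nnot b$, which is just (SDM\ref{Itm:SD2}).

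For part (i), the strategy is to prove each of the three claimed equalities by showing two inequalities. Take $\nnot(a\land b) = \nnot(\nnot\nnot a \land b)$ as representative. Since $a \leq \nnot\nnot a$ we get $a \land b \leq \nnot\nnot a \land b$, and applying the order-reversing $\nnot$ gives $\nnot(\nnot\nnot a \land b) \leq \nnot(a\land b)$. For the reverse inequality, the cleanest route is to use (SDM\ref{Itm:SD3}): $\nnot(a\land b) = \nnot\nnot\nnot(a\land b)$, and then I want to compare $\nnot\nnot(a\land b)$ with $\nnot\nnot(\nnot\nnot a \land b)$. By (SDM\ref{Itm:SD2}), $\nnot\nnot(a\land b) = \nnot\nnot a \land \nnot\nnot b$ and $\nnot\nnot(\nnot\nnot a \land b) = \nnot\nnot\nnot\nnot a \land \nnot\nnot b = \nnot\nnot a \land \nnot\nnot b$ (using (SDM\ref{Itm:SD3}) again to collapse $\nnot\nnot\nnot\nnot a = \nnot(\nnot\nnot\nnot a) = \nnot(\nnot a) = \nnot\nnot a$). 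Hence $\nnot\nnot(a\land b) = \nnot\nnot(\nnot\nnot a \land b)$, and applying $\nnot$ once more and using (SDM\ref{Itm:SD3}) twice gives $\nnot(a\land b) = \nnot(\nnot\nnot a\land b)$. The same argument symmetrically handles the $b$ slot, and combining the two handles the $\nnot\nnot a \land \nnot\nnot b$ version; alternatively the fourth term follows by applying the second equality to the already-transformed first one.

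For part (ii), I would first apply part (i) to rewrite $\nnot(\nnot(\nnot a \land b)\land c)$. Set $d := \nnot(\nnot a \land b)$. Note $d = \nnot\nnot\nnot(\text{something})$-type term, so $d$ is in the image of $\nnot$; by (SDM\ref{Itm:SD3}), terms of the form $\nnot e$ satisfy $\nnot e = \nnot\nnot\nnot e$, i.e. $\nnot\nnot d = \nnot\nnot\nnot(\nnot a\land b)$, which in turn, again by (SDM\ref{Itm:SD3}), equals $\nnot(\nnot a \land b) = d$. So $d$ is a fixpoint of $\nnot\nnot$. Now I want $\nnot(d \land c) \leq \nnot(a\land c)$, for which by order-reversal it suffices to show $a \land c \leq d \land c$, i.e. $a \land c \leq d = \nnot(\nnot a \land b)$. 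Equivalently (order-reversal again, and $a\leq\nnot\nnot a$), it suffices to show $\nnot a \land b \leq \nnot(a \land c)$ -- but that is not quite in the right form, so instead I would argue directly: $a \leq \nnot\nnot a = \nnot(\nnot a)$, and since $\nnot a \land b \leq \nnot a$ we get $\nnot(\nnot a) \leq \nnot(\nnot a \land b) = d$; hence $a \leq d$, so $a \land c \leq d \land c$, and applying $\nnot$ finishes it.

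\textbf{Main obstacle.} None of the steps is deep; the only real care is bookkeeping with the multiple occurrences of $\nnot$ and making sure each collapse $\nnot^{k} \to \nnot^{k-2}$ is licensed by (SDM\ref{Itm:SD3}) applied to an argument that is genuinely of the form $\nnot(\cdot)$. The one point where a misstep is easy is in part (i): one must resist assuming $\nnot\nnot$ is the identity (that is (DM), not available here) and instead only use $a \leq \nnot\nnot a$ in one direction plus the idempotency-on-the-nose identity $\nnot\nnot\nnot\nnot a = \nnot\nnot a$ in the other. Once the two auxiliary facts ($a\leq\nnot\nnot a$ and $\nnot\nnot a\land\nnot\nnot b = \nnot\nnot(a\land b)$) are isolated at the outset, both parts are short inequality chases.
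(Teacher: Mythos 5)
There is a genuine gap: your ``key derived fact'' $a \leq \nnot\nnot a$ is \emph{not} valid in semi-De Morgan algebras, and your proof of (ii) depends on it essentially. (SDM\ref{Itm:SD3}) only says that $\nnot\nnot$ is the identity on elements of the form $\nnot x$; it does not make $\nnot\nnot$ a closure operator on all of $A$. A counterexample: take the chain $0<a<b<1$ with $\nnot 0=1$, $\nnot a=a$, $\nnot b=a$, $\nnot 1=0$. This is order-reversing, satisfies (N\ref{Itm:N1})--(N2) and (SDM\ref{Itm:SD1})--(SDM\ref{Itm:SD3}) (on a chain, (N2) amounts to $\nnot$ being antitone and (SDM\ref{Itm:SD2}) to $\nnot\nnot$ being monotone, and $\nnot\nnot\nnot x=\nnot x$ is checked directly), yet $\nnot\nnot b=a<b$. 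The inequality $x\leq\nnot\nnot x$ is exactly what distinguishes the subvariety of so-called demi-$p$-lattices inside $\SDM$; it holds in $\PDL$ and $\DM$ but not in general, which is precisely the trap of ``not assuming (DM)'' that you warned against in a different guise.

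The damage is uneven. Part (i) survives, because your ``reverse inequality'' computation via (SDM\ref{Itm:SD2}) and (SDM\ref{Itm:SD3}) --- namely $\nnot\nnot(\nnot\nnot a\land b)=\nnot\nnot\nnot\nnot a\land\nnot\nnot b=\nnot\nnot a\land\nnot\nnot b=\nnot\nnot(a\land b)$, then apply $\nnot$ and collapse with (SDM\ref{Itm:SD3}) --- already proves the \emph{equality} outright; the half that invokes $a\leq\nnot\nnot a$ is both wrong and redundant and should simply be deleted. (This is in fact a nice self-contained argument; the paper just cites \cite[Lemma~1.1]{Ce07} for (i).) Part (ii) does not survive: the step ``$a\leq\nnot\nnot a$, hence $a\leq\nnot(\nnot a\land b)=d$'' is unjustified, and without it you cannot get $a\land c\leq d\land c$. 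The repair is to avoid comparing $a$ with $\nnot\nnot a$ altogether: from $\nnot a\land b\leq\nnot a$ and antitonicity you get the \emph{true} inequality $\nnot\nnot a\leq\nnot(\nnot a\land b)$, hence $\nnot\nnot a\land c\leq\nnot(\nnot a\land b)\land c$, hence $\nnot(\nnot(\nnot a\land b)\land c)\leq\nnot(\nnot\nnot a\land c)$, and only at this last stage do you use part (i) to rewrite $\nnot(\nnot\nnot a\land c)$ as $\nnot(a\land c)$. That is the paper's argument: the double negation on $a$ is introduced \emph{under} an outer $\nnot$ via (i), never by the (false) pointwise inequality.
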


\begin{proof}
(i). See~\cite[Lemma~1.1]{Ce07}.

(ii). Let $a,b,c \in A$. Observe that, by the preceding item, 
$\nnot (a \land b) = \nnot (\nnot \nnot a \land b)$.
Since $\nnot$ is order-reversing, from $\nnot a \land b \leq \nnot a $ we have
 $\nnot \nnot a \land c \leq  \nnot (\nnot a \land b)   \land c  $
 and
 $  \nnot (\nnot (\nnot a \land b)   \land c ) \leq \nnot (\nnot \nnot a \land c ) = \nnot ( a \land c )$.
\end{proof}

Let $\Al = \la A; \land, \lor, \neg, 0, 1  \ra$ be a semi-De Morgan algebra.
Defining $A^* := \{ \nnot a : a \in A \}$ and
$a \lor^* b : = \nnot \nnot  ( a \lor  b)$ for all $a,b \in A^*$,
we consider the algebra
$\Al^* = \la A^*; \land, \lor^*, \nnot, 0, 1  \ra$.
It is easy to show that $A^*$ is indeed closed under the operations
$\{\land, \lor^*, \nnot, 0, 1 \}$. Moreover, we have the following result, which may be viewed as a generalization
of Glivenko's theorem relating Heyting and Boolean algebras.

\begin{lemma}[\cite{Sa87}, Thm.~2.4]
\label{lem:fey}
If $\Al$ is a semi-De Morgan algebra, then
$\Al^*$
is a De Morgan algebra.
\end{lemma}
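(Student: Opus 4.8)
**Plan for proving Lemma~\ref{lem:fey} ($\Al^*$ is a De Morgan algebra).**

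The strategy is to verify directly that $\Al^* = \la A^*; \land, \lor^*, \nnot, 0, 1 \ra$ satisfies the bounded-distributive-lattice axioms together with the De Morgan equations, making essential use of Lemma~\ref{lem:forleibn}(i), which tells us that $\nnot$ is insensitive to inserting or removing double negations inside a meet. First I would record the basic facts: $0 = \nnot\top$ and $1 = \nnot\bot$ lie in $A^*$; $A^*$ is closed under $\land$ because for $\nnot a, \nnot b \in A^*$ we have $\nnot a \land \nnot b = \nnot(a \lor b) \in A^*$ by (N2); and $A^*$ is closed under $\lor^*$ and under $\nnot$ by construction (note $\nnot\nnot a = \nnot\nnot\nnot\nnot a$ via (SDM3), so every element of $A^*$ is a fixed point of $\nnot\nnot$, i.e.\ $\nnot\nnot x = x$ for all $x \in A^*$). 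This last observation is the linchpin: once we know $\nnot\nnot$ acts as the identity on $A^*$, the De Morgan law (DM) $\nnot\nnot x \approx x$ holds in $\Al^*$ for free.

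Next I would check that $\la A^*; \land, \lor^* \ra$ is a lattice. The meet $\land$ is the restriction of the meet of $\Al$, so commutativity, associativity, idempotency of $\land$ are inherited. For $\lor^*$: commutativity is immediate; idempotency follows since $a \lor^* a = \nnot\nnot(a \lor a) = \nnot\nnot a = a$ for $a \in A^*$; associativity $\nnot\nnot(\nnot\nnot(a\lor b)\lor c) = \nnot\nnot(a\lor(\nnot\nnot(b\lor c)))$ should follow by using Lemma~\ref{lem:forleibn}(i) (dualized appropriately, i.e.\ applied to $\nnot$ of a disjunction rewritten via (N2)) to strip the inner double negations before reassociating $\lor$ in $\Al$ and reapplying $\nnot\nnot$. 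The absorption laws $a \lor^* (a \land b) = a$ and $a \land (a \lor^* b) = a$ for $a,b \in A^*$ are the most delicate: the first reduces to $\nnot\nnot(a \lor (a\land b)) = \nnot\nnot a = a$ using lattice absorption in $\Al$; the second requires showing $a \land \nnot\nnot(a \lor b) = a$, i.e.\ $a \leq \nnot\nnot(a\lor b)$, which holds because $a \leq a \lor b \leq \nnot\nnot(a\lor b)$ (as $\nnot\nnot$ is a closure-type operator, $x \leq \nnot\nnot x$ — this follows from Lemma~\ref{lem:forleibn} or standard semi-De Morgan facts). Boundedness ($x \land 0 = 0$, $x \lor^* 1 = 1$) is routine. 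Distributivity $a \land (b \lor^* c) = (a \land b) \lor^* (a \land c)$ for $a,b,c \in A^*$ is the computational heart: one would expand the right side as $\nnot\nnot((a\land b)\lor(a\land c))$, use distributivity in $\Al$ to get $\nnot\nnot(a \land (b \lor c))$, and then match it with the left side $a \land \nnot\nnot(b\lor c)$; closing this gap is exactly the kind of identity Lemma~\ref{lem:forleibn}(i) is designed to supply (rewrite $\nnot\nnot(a \land (b\lor c))$, pull the double negation off the $b \lor c$, and compare).

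Finally, the negation axioms for $\Al^*$: (N1) $\nnot 0 = \nnot\nnot\top = \top = 1$ — but we must check $\top = 1$ as elements of $A^*$, and indeed $1 = \nnot\bot = \nnot\nnot\nnot\bot$... more carefully, $\nnot 0 = \nnot\nnot\top$, which equals $1$ iff $\nnot\nnot\top = \nnot\bot$; this follows from $\nnot\top = \bot = \nnot 0$... I would untangle this using (SDM1) $\nnot\top \approx \bot$ and the fixed-point property. For the De Morgan interaction with $\lor^*$: $\nnot(a \lor^* b) = \nnot\nnot\nnot(a\lor b) = \nnot(a \lor b) = \nnot a \land \nnot b$ using (SDM3) and (N2) — clean. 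The remaining De Morgan law $\nnot(a \land b) = \nnot a \lor^* \nnot b$ unwinds to $\nnot(a\land b) = \nnot\nnot(\nnot a \lor \nnot b) = \nnot\nnot\nnot(a \land b)$ via (N2)-style reasoning... wait, $\nnot a \lor \nnot b$ need not simplify directly, so instead $\nnot\nnot(\nnot a \lor \nnot b)$: apply $\nnot\nnot$ and use that on $A^*$-elements we reduce to $\nnot(a \land b)$ by (SDM3) after rewriting — this is where I'd be most careful.

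\textbf{Main obstacle.} The genuine difficulty is the distributivity check and the associativity of $\lor^*$, since these are where $\lor^*$ (an "inflated" join) could in principle misbehave; everything hinges on Lemma~\ref{lem:forleibn}(i) letting us commute $\nnot\nnot$ past meets freely, and on the dual statement for joins, which one may need to derive. The De Morgan equations themselves are comparatively easy once the fixed-point property $\nnot\nnot x = x$ on $A^*$ is in hand. One could alternatively cite~\cite{Sa87}, Thm.~2.4 and merely sketch; since the excerpt already attributes the result there, I would present the verification of the key identities and defer the fully routine lattice axioms to the reader.
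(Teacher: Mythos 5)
The paper does not actually prove this lemma: it is imported verbatim from Sankappanavar (\cite{Sa87}, Thm.~2.4), so there is no in-paper argument to compare your route against. Your plan of direct verification is reasonable, and most individual steps do go through: the fixed-point property $\nnot\nnot x = x$ on $A^*$ via (SDM\ref{Itm:SD3}); closure of $A^*$ under $\land$ via (N2); associativity of $\lor^*$ via $\nnot(\nnot\nnot u \lor c) = \nnot\nnot\nnot u \land \nnot c = \nnot u \land \nnot c = \nnot(u \lor c)$; and distributivity via (SDM\ref{Itm:SD2}), which gives $\nnot\nnot(a \land (b\lor c)) = \nnot\nnot a \land \nnot\nnot(b \lor c) = a \land \nnot\nnot(b\lor c)$ for $a \in A^*$ directly, with no need for Lemma~\ref{lem:forleibn}(i). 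Note also that under Definition~\ref{def:sdmock} a De Morgan algebra is just a semi-De Morgan algebra satisfying $\nnot\nnot x \approx x$, so the law $\nnot(a\land b) = \nnot a \lor^* \nnot b$ that you flag as the place you would "be most careful" is not among the axioms you must check; it follows afterwards from (N2) and involutivity in $\Al^*$.

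The one genuine error is the appeal to ``$x \leq \nnot\nnot x$'' as a standard semi-De Morgan fact. It is false in general: the identity $x \land \nnot\nnot x \approx x$ carves out the proper subvariety of demi-$p$-lattices. A concrete counterexample is the three-element chain $0 < a < 1$ with $\nnot 0 = \nnot a = 1$ and $\nnot 1 = 0$, which satisfies (N1), (N2) and (SDM\ref{Itm:SD1})--(SDM\ref{Itm:SD3}) yet has $\nnot\nnot a = 0 < a$. Fortunately, the only instance you need --- $a \leq \nnot\nnot(a \lor b)$ for $a,b \in A^*$, in the absorption law --- survives by a different argument: $\nnot$ is antitone by (N2), hence $\nnot\nnot$ is monotone, so $a = \nnot\nnot a \leq \nnot\nnot(a\lor b)$. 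With that repair (and the same monotonicity argument substituted wherever you invoked the ``closure-type operator'' property), your outline does yield a complete proof.
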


{% \ {red}

%[Maybe here we'll need further algebraic lemmas about distributive lattices with negation or Ockham algebras, to show that certain rules are sound. The text that follows may be moved to another section, specifically on SDM, together with characterization
%of the reduced models, 1-assertional logics etc. (stuff not copied yet from the beforechina file).]

The preceding Lemma is interesting for us because of the following logical consequence.
Let $\phi $ be a  formula in the language of semi-De Morgan logic. 
Define the formula $\phi^*$ recursively as follows:
$$
\phi^* : =
\begin{cases}
 %\phi &\mbox{ if }\phi = \top  \\
  \nnot \nnot \phi &\mbox{ if }\phi \in Var \cup \{ \top \} \\
    \nnot \phi^*_1 &\mbox{ if }\phi = \nnot \phi_1 \\
\phi_1^* \land \phi_2^* & \mbox{ if } \phi = \phi_1 \land \phi_2 \\
\nnot \nnot (  \phi_1^* \lor \phi_2^*)  & \mbox{ if } \phi = \phi_1 \lor \phi_2.
\end{cases}
$$

\begin{lemma}
\label{lem:us}
Let $\la \phi, \psi \ra$ be a rule that is sound w.r.t.~$\vdash^{\leq}_{\DM} $ (i.e.~the Belnap-Dunn logic).
Then $\la \phi^*, \psi^* \ra$ is sound w.r.t.~$\vdash^{\leq}_{\SDM} $.
\end{lemma}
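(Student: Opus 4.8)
The plan is to exploit the fact that the order-preserving logic $\vdash^{\leq}_{\K}$ of any class $\K \subseteq \DN$ is determined by the lattice order, so that soundness of a rule $\la \phi, \psi \ra$ with respect to $\vdash^{\leq}_{\K}$ amounts to the inequality $\phi \leq \psi$ holding in every member of $\K$. Thus the statement to prove reduces to: if $\phi \leq \psi$ holds in every De Morgan algebra, then $\phi^* \leq \psi^*$ holds in every semi-De Morgan algebra $\A$. The key observation is that the translation $(\cdot)^*$ on formulas is built precisely to mirror, termwise, the passage from $\A$ to the De Morgan algebra $\A^*$ of Lemma~\ref{lem:fey}: the clauses of the definition of $\phi^*$ match the operations $\{\land, \lor^*, \nnot, 0, 1\}$ of $\A^*$ after composing variables (and $\top$) with double negation.

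The main step I would carry out is an interpretation lemma: for every semi-De Morgan algebra $\A$ and every assignment $h \colon Var \to A$, letting $h^* \colon Var \to A^*$ be the assignment $h^*(x) := \nnot\nnot h(x)$, one has $\phi^{\A}(h) \mathbin{=} (\phi^*)^{\A}(h)$ is \emph{not} quite right; rather the correct identity is $(\phi^*)^{\A}(h) = \phi^{\A^*}(h^*)$, i.e.\ evaluating the translated formula $\phi^*$ in $\A$ gives the same element of $A^*$ as evaluating the original formula $\phi$ in the De Morgan algebra $\A^*$ under the double-negated assignment. This is proved by induction on the structure of $\phi$. The base cases ($\phi$ a variable or $\top$) hold by the definition of $h^*$ and because $\top^{\A^*} = \nnot\nnot\top$ (using that $1 \in A^*$). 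The $\land$ case is immediate since $\land$ is the same operation in $\A$ and in $\A^*$. The $\lor$ case uses that the join of $\A^*$ is $a \lor^* b = \nnot\nnot(a \lor b)$, which matches the clause $\phi^* = \nnot\nnot(\phi_1^* \lor \phi_2^*)$. The $\nnot$ case uses that $\nnot$ is the same operation in $\A$ and in $\A^*$, together with closure of $A^*$ under $\nnot$; here Lemma~\ref{lem:forleibn}(i) may be invoked to keep everything inside $A^*$ and to reconcile the idempotent-like behaviour of $\nnot$ on doubly negated elements, though in fact the clause $\phi^* = \nnot \phi_1^*$ already matches $\nnot^{\A^*}$ directly.

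Once this interpretation lemma is established, the conclusion is immediate: suppose $\la \phi, \psi \ra$ is sound for $\vdash^{\leq}_{\DM}$, so $\phi^{\B}(g) \leq \psi^{\B}(g)$ for every De Morgan algebra $\B$ and every assignment $g$. Fix a semi-De Morgan algebra $\A$ and an assignment $h$. By Lemma~\ref{lem:fey}, $\A^*$ is a De Morgan algebra, so $\phi^{\A^*}(h^*) \leq \psi^{\A^*}(h^*)$; since $A^*$ is a subposet of $A$ (the order on $\A^*$ is the restriction of the order on $\A$, as $\land$ agrees), this inequality holds in $\A$ as well, and by the interpretation lemma it reads $(\phi^*)^{\A}(h) \leq (\psi^*)^{\A}(h)$. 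As $\A$ and $h$ were arbitrary, $\phi^* \leq \psi^*$ holds throughout $\SDM$, i.e.\ $\la \phi^*, \psi^* \ra$ is sound w.r.t.\ $\vdash^{\leq}_{\SDM}$.

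\textbf{Expected obstacle.} The only delicate point is the bookkeeping in the $\nnot$-clause of the induction and the verification that the partial order of $\A^*$ is genuinely inherited from $\A$ — i.e.\ that $a \leq^* b$ iff $a \leq b$ for $a,b \in A^*$ — which is what lets me transport the De Morgan inequality back down into $\A$; this follows because the meet of $\A^*$ is the meet of $\A$, but it should be stated explicitly. A secondary subtlety is making sure the base clause for $\top$ is consistent: one needs $(\top^*)^{\A} = \nnot\nnot\top = \top^{\A^*}$, which holds since $\top^{\A} \in A^*$ and $\nnot\nnot$ fixes the top. None of this involves heavy computation; the argument is essentially a structural induction plus an application of Lemma~\ref{lem:fey}.
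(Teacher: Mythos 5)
Your proposal is correct and follows essentially the same route as the paper: the paper argues by contraposition that a failure of $\phi^* \leq \psi^*$ in a semi-De Morgan algebra $\A$ yields a failure of $\phi \leq \psi$ in the De Morgan algebra $\A^*$ of Lemma~\ref{lem:fey}, which is exactly your interpretation identity $(\phi^*)^{\A}(h) = \phi^{\A^*}(h^*)$ read in the other direction. Your write-up merely makes explicit the structural induction and the order-inheritance point that the paper leaves as ``easy to check.''
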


\begin{proof}
By contraposition, assume $\la \phi^*, \psi^* \ra$ is not sound in $\vdash^{\leq}_{\SDM} $.
Then there is a semi-De Morgan algebra $\A$ that witnesses
the failure of the inequality $\phi^* \leq \psi^* $.
It is then easy to check that $\A^*$ (which is a De Morgan algebra, by Lemma~\ref{lem:fey})  witnesses
the failure of $\phi \leq \psi $, contradicting the assumption that 
$\la \phi, \psi \ra$ is sound w.r.t.~the Belnap-Dunn logic.
\end{proof}
}

\subsection{Logics}

%We shall henceforth denote by $\Fm$ the formula algebra 
 the propositional language 
%$\{\land, \lor, \nnot, \bot, \top \}$, freely generated by a denumerable set of variables, and by $Fm$ the  universe of this algebra. 
Here, a \emph{logic}
is a structural (Tarskian)  consequence relation on $\Fm$, that is, a
subset of $\PP(Fm) \times Fm$. Logics will be denoted by $\vdash$ with suitable subscripts, regardless
of the way (syntactical or semantical) they are defined. A logic can, for instance, be defined through a \emph{logical matrix}, 
i.e.~a pair $\Mt = \la \A, D \ra$ where $\A$ is an algebra and $D \subseteq A$ a set of designated elements. 
One sets $\Gamma \vdash_\Mt \phi $ iff  for every homomorphism $h \colon Fm \to A$, we have
$h(\phi) \in D$ whenever $h(\Gamma) \subseteq D$.
Similarly, a class of logical matrices defines a logic by considering the intersection of the logics defined by each member of the class.
Another way is by considering a class of partially ordered algebras $\K$, giving rise to the order-preserving logic $\vdash^{\leq}_{\K} $
defined in the Introduction. Indeed, for a class $\K$ of lattice-ordered algebras,
$\vdash^{\leq}_{\K} $ is the logic defined by the class of all matrices
$\la \A, D \ra$ such that $\A \in \K$ and $D$ is a lattice filter of $\A$.

We shall also be interested in logics defined through Hilbert calculi consisting of a finite or denumerable set of rule schemata. By a \emph{Hilbert rule} we mean a pair $\la \Gamma, \phi \ra$, 
usually denoted $\frac{\Gamma}{\psi}$, where $\Gamma \cup \psi \subseteq Fm$. 
When $\Gamma$ is a singleton (say, $\Gamma = \{ \phi \}$ for some
$\phi \in Fm$), we speak of a \emph{formula-to-formula} rule, usually written $\frac{\phi}{\psi}$.  
We shall
write
$\Tfrac{\varphi}{\psi}$ to denote the ``bidirectional rule'', which is really just
 an abbreviation for the pair of formula-to-formula 
rules
$\{ \frac{\varphi}{\psi}, \frac{\psi}{\phi} \}$.
Every set $\Ru$ of Hilbert rules determines a logic $\vdash_{\Ru}$
in the standard way, and
we write $\Gamma \vdash_{\Ru} \phi$ whenever there is a Hilbert derivation
of $\phi$ from $\Gamma$ that uses the rules in $\Ru$.

Below we state formally a result that will be central to our study of the relation between order-preserving logics
and varieties of distributive lattices with negation. 

Recall that a logic is said to be \emph{non-pseudo-axiomatic}
if the set of its theorems is the set of formulas that are derivable from every formula~\cite[p.~78]{Ja06a}.
Every order-preserving logic $\vdash^{\leq}_{\K} $
considered in the present paper is non-pseudo-axiomatic. Moreover,
since all algebras in $\K \subseteq \DN$ have a lattice reduct,
$\vdash^{\leq}_{\K} $ is \emph{semilattice-based} relative to $\land$ and $\K$~\cite[p.76]{Ja06a}.
%(L\ref{Itm:L1}), (L\ref{Itm:L2}), (L\ref{Itm:L3})\label{def:lat}
Therefore, we can apply~\cite[Thm.~3.7]{Ja06a} to obtain the following.

\begin{theorem}
\label{thm:ramon}
There is a dual isomorphism between
the set of all subvarieties of $\DN$, ordered by inclusion,
and 
the set of logics $\vdash^{\leq}_{\K} $,
ordered by extension. 
%with $\K \subseteq \DN$ ordered by inclusion,
%and the set of all subvarieties of $\DN$. 
The isomorphism is given by $\K \mapsto \, \vdash^{\leq}_{\K}$.
\end{theorem}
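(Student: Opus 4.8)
The plan is to reduce the statement to \cite[Thm.~3.7]{Ja06a} and to verify carefully the hypotheses that make that theorem applicable. The general result of Jansana characterizes, for a logic that is semilattice-based relative to a connective $\land$ and a class $\K$, a dual isomorphism between certain varieties and the extensions of the logic; the work here is to check that the order-preserving logics $\vdash^{\leq}_{\K}$ for $\K \subseteq \DN$ fit into that framework and that the relevant class of varieties is exactly the class of all subvarieties of $\DN$.

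First I would recall the two structural facts already noted in the excerpt: every $\vdash^{\leq}_{\K}$ (for $\K \subseteq \DN$) is non-pseudo-axiomatic, and it is semilattice-based relative to $\land$ and $\K$, since every algebra in $\DN$ has a bounded lattice reduct and the order is definable by $\phi \leq \psi \iff \phi \land \psi \approx \phi$. These are precisely the hypotheses of \cite[Thm.~3.7]{Ja06a}, so that theorem yields a dual isomorphism between the lattice of subvarieties of the relevant algebraic counterpart of $\vdash^{\leq}_{\DN}$ and the lattice of finitary extensions of $\vdash^{\leq}_{\DN}$, with the map given by $\K \mapsto \, \vdash^{\leq}_{\K}$ on one side and by taking the $\land$-semilattice-based algebraic counterpart on the other.

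The second step is to identify that algebraic counterpart with $\DN$ itself. Here I would argue that the class of algebras arising from $\vdash^{\leq}_{\DN}$ via the standard semilattice-based construction is exactly $\DN$: since $\DN$ is a variety presented by the equations $\DNE$ and the order-preserving logic of a variety of lattice-ordered algebras ``remembers'' all the equations (an equation $\phi \approx \psi$ holds in $\A \in \DN$ iff both $\phi \vdash^{\leq}_{\A} \psi$ and $\psi \vdash^{\leq}_{\A} \phi$), no algebra outside $\DN$ can be added without changing the logic, and conversely every subvariety $\K \subseteq \DN$ gives a genuinely different logic by Theorem~\ref{thm:ramon}'s injectivity. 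Combined with the fact that the extensions appearing in Jansana's correspondence are exactly the logics of the form $\vdash^{\leq}_{\K}$ for $\K$ a subvariety, this pins down both lattices and the isomorphism between them, and also shows the map is well-defined, order-reversing, and bijective.

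I expect the main obstacle to be bookkeeping rather than conceptual: one must state precisely which version of ``semilattice-based'' and which closure operators (subquasivariety versus subvariety, finitary extension versus arbitrary extension) appear in \cite{Ja06a}, and make sure that the passage from the general theorem — which a priori speaks of subquasivarieties and may involve the algebraic counterpart in the sense of the lattice of $\vdash^{\leq}_{\DN}$-filters — down to the clean statement about \emph{varieties} of $\DN$ is legitimate. The key point enabling this is that $\DN$ and all the classes we care about are equationally defined and that $\vdash^{\leq}_{\K}$ determines $\K$ as a variety (not merely as a quasivariety), so that the quasivariety-level isomorphism of \cite[Thm.~3.7]{Ja06a} restricts to the sublattice of varieties on the algebraic side and, correspondingly, to the extensions that are themselves order-preserving logics of varieties on the logical side. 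Once that alignment is made explicit, the dual isomorphism and the formula $\K \mapsto \, \vdash^{\leq}_{\K}$ follow immediately.
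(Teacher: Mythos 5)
Your proposal follows essentially the same route as the paper, which likewise obtains the statement directly from \cite[Thm.~3.7]{Ja06a} after observing that each $\vdash^{\leq}_{\K}$ is non-pseudo-axiomatic and semilattice-based relative to $\land$ and $\K$; your extra bookkeeping identifying the algebraic counterpart with $\DN$ and restricting the correspondence to varieties is sensible and consistent with what the paper leaves implicit. One small caveat: do not justify injectivity ``by Theorem~\ref{thm:ramon}'s injectivity'' (that is the statement being proved); rely instead only on your earlier, correct observation that $\vdash^{\leq}_{\K}$ recovers the full equational theory of $\K$ via $\phi \approx \psi$ iff $\phi \dashv\vdash^{\leq}_{\K} \psi$.
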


In the present paper, we will study the problem of
obtaining, from
a basis $\Eq$ for the  equational theory of $\K \subseteq \DN$, 
a set of rules
that form a basis for the logic
$\vdash^{\leq}_{\K}$; in particular, 
 we shall be interested in conditions 
 ensuring that the  set of rules may be taken to be finite.

 \section{The order-preserving logic of $\DN$
 %distributive lattice with negation} %The base logic
 }
 \label{sec:dn}
 
 In this Section we introduce an infinite Hilbert calculus for  
 the order-preserving logic of the variety $\DN$. 
 Our calculus is obtained by translating the set $\DNE$ of equations  that axiomatize $\DN$
 into a set $\Ru^\DNE$ of bidirectional rules, which we  then suitably enlarge  in order to ensure
 that the corresponding inter-derivability relation is a congruence of $\Fm$. %{\ {blue}self-extensionality!}. 
 After  showing that the denumerable set $\Ru_{\omega}$ of rules thus
obtained axiomatizes $\vdash^{\leq}_{\DN}$
(Corollary~\ref{cor:compdn}), 
we will proceed to show that $\Ru_{\omega}$ 
cannot be replaced by any finite set. This is the main result of this Section: 
\emph{the order-preserving logic of $\DN$ is not finitely based} (Theorem~\ref{th:dnnotfi}). 
We note that most of the results that we proceed to prove below also hold
for more general classes than $\DN$, and thus for logics weaker than 
$\vdash^{\leq}_{\DN}$ (for instance, Lemma~\ref{cong} only relies on having the set of commutativity rules
$\Ru_{\mathsf{C}}$ defined below, etc.). In view of future research, this suggests
the project of  applying
our techniques
to more general logics/classes of algebras. %in future research. 
Given a set of equations $\Eq :=\{\varphi_i\approx \psi_i :  i\in I \} \subseteq Fm \times Fm$,
 we define the following set of bidirectional rules:
$$\Ru^\Eq :=\Big\{\Tfrac{\varphi_i}{\psi_i}:i\in I\Big\}
.$$

Note that every rule in $\Ru^\Eq$  is formula-to-formula.
%has only one formula as premiss;
%we call such rules \emph{formula-to-formula}. 

Following standard notation, we  use $x,y,z$ etc.~to denote
 variables  used in  equations and $p,q,r$ etc.~to denote  logical variables.
 %, but obviously both can be thought as the same objects. 
 For instance, the equations  (L\ref{Itm:L1}) in Definition~\ref{def:lat} give us
 $\Tfrac{p\lor q}{q\lor p}$ 
 and  $\Tfrac{p\land q}{q\land p}$, and so on.

Given a set $ \Ru \subseteq Fm \times Fm$ of formula-to-formula rules, 
%let $\Ru_{\mathsf{cg}}$ be the closure of $\Ru$ given by the following prescriptions:
%\qquad\qquad\qquad\qquad\qquad\qquad 
%if $\frac{\varphi}{\psi}\in \Ru_{\mathsf{cg}}$, then 
%$\frac{\varphi\lor \gamma}{\psi\lor \gamma},\frac{\varphi\land \gamma}{\psi\land \gamma},\frac{\neg \psi}{\neg \varphi}\in \Ru_{\mathsf{cg}}$ for every $\gamma\in Fm$.
%Let $\Ru$ be a set of rules, and 
let 
$\{q_i:i<\omega\}$ be a set of fresh variables such that 
$\{q_i:i<\omega\}\cap \mathsf{var}(\Ru)=\emptyset$. Define:
\begin{align*}
% \item[] $
 \Ru_0 &:= \Ru\\  
% \item[] $
 \Ru_{n+1} &: =\Big\{\frac{\varphi\lor q_n}{\psi\lor q_n}:\frac{\varphi}{\psi}\in \Ru_n\Big\}\cup \Big\{\frac{\varphi\land q_n}{\psi\land q_n}:\frac{\varphi}{\psi}\in \Ru_n\Big\}\cup 
\Big\{\frac{\neg \psi}{\neg \varphi}:\frac{\varphi}{\psi}\in \Ru_n\Big\}\\
%
%\item[] $
\Ru_\omega&:=\bigcup_{n<\omega}\Ru_n %$
\end{align*}
%
%Given $R\subseteq Fm \times Fm$ and letting $R^+$ be the closure of $R$ by 
% 
%\qquad\qquad\qquad\qquad\qquad\qquad $\frac{\varphi}{\psi}\in R^+$ then 
%$\frac{\varphi\lor \gamma}{\psi\lor \gamma},\frac{\varphi\land \gamma}{\psi\land \gamma},\frac{\neg \psi}{\neg \varphi}\in R^+$ for every $\gamma\in \Fm$.
%
Let us also fix  the set $\Ru_{\mathsf{C}}=\{\frac{p\land q}{q\land p}, \frac{p\lor q}{q\lor p}\}$ 
%{\ {red}put a footnote somewhere saying that we will never add explicitly these rules as
%they come directly from the equations in DN...} 
and  
%$\Ru_{\mathsf{F}}=\{\frac{}{\top},\frac{p\,,\,q}{p\land q}\}$.
$\Ru_{\mathsf{F}}=\{\frac{}{\top},\frac{p\,,\,q}{p\land q}, \frac{p}{p \lor q}\}$.
As the notation suggests, the set $\Ru_{\mathsf{C}}$ ensures that the conjunction and disjunction
are commutative, while the rules in $\Ru_{\mathsf{F}}$ say that the designated elements
are (non-empty) lattice filters of the algebraic models of the logic\footnote{Observe that
$\Ru_{\mathsf{C}} \subseteq \Ru^{\DNE}$, so we will not need to worry about adding $\Ru_{\mathsf{C}}$ when dealing
with $\vdash^\leq_{\DN}$ and stronger logics.}.
%{\ {red} C for commutativity and F for filter}

Recall that a logic $\vdash$ is said to be \emph{selfextensional} if the inter-derivability 
relation $ \dashv \vdash$ is a congruence of the formula algebra $\Fm$.
Obviously, every order-preserving logic $\vdash^{\leq}_{\K}$ is  selfextensional:
thus one needs to ensure that the syntactic counterpart of $\vdash^{\leq}_{\K}$
also enjoys this property.

\begin{lemma}\label{cong}
Let $\Ru \subseteq Fm \times Fm$ be a set of formula-to-formula rules such that 
 $\Ru_{\mathsf{C}} \subseteq \Ru $. Then 
%and
%$\vdash{=}\vdash_{\Ru_{\mathsf{cg}}}$ 
the inter-derivability relation $\dashv\vdash_{\Ru_\omega}$ is a congruence of $\Fm$.
\end{lemma}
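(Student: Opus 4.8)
The plan is to show that $\dashv\vdash_{\Ru_\omega}$ is reflexive, symmetric, transitive, and compatible with each of the three connectives $\land,\lor,\neg$. Reflexivity, symmetry, and transitivity of an inter-derivability relation hold for any logic, so the only real content is the congruence (substitution) property: if $\varphi\dashv\vdash_{\Ru_\omega}\varphi'$ and $\psi\dashv\vdash_{\Ru_\omega}\psi'$, then $\neg\varphi\dashv\vdash_{\Ru_\omega}\neg\varphi'$, $\varphi\land\psi\dashv\vdash_{\Ru_\omega}\varphi'\land\psi'$, and $\varphi\lor\psi\dashv\vdash_{\Ru_\omega}\varphi'\lor\psi'$.

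First I would treat negation. Suppose $\varphi\vdash_{\Ru_\omega}\varphi'$, witnessed by a Hilbert derivation $\chi_0=\varphi,\chi_1,\dots,\chi_k=\varphi'$ in which each step is an instance of a rule $\frac{\alpha}{\beta}\in\Ru_\omega$. I want to transform this into a derivation of $\neg\varphi$ from $\neg\varphi'$; the idea is to read the derivation \emph{backwards}. The key observation is that $\Ru_\omega$ is closed (up to the choice of fresh variables, which can always be renamed since substitution instances of rules are again instances) under the operation $\frac{\alpha}{\beta}\mapsto\frac{\neg\beta}{\neg\alpha}$: this is exactly the third clause in the definition of $\Ru_{n+1}$. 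Hence from the step $\chi_i\Rightarrow\chi_{i+1}$ obtained by an instance of $\frac{\alpha}{\beta}$, I get the step $\neg\chi_{i+1}\Rightarrow\neg\chi_i$ by the corresponding instance of $\frac{\neg\beta}{\neg\alpha}\in\Ru_\omega$. Chaining these in reverse order gives $\neg\varphi'\vdash_{\Ru_\omega}\neg\varphi$; symmetrically, from $\varphi'\vdash_{\Ru_\omega}\varphi$ we get $\neg\varphi\vdash_{\Ru_\omega}\neg\varphi'$, so $\neg\varphi\dashv\vdash_{\Ru_\omega}\neg\varphi'$.

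Next I would handle $\land$ and $\lor$ by a similar mechanism, this time using the first two clauses of $\Ru_{n+1}$. To show $\varphi\land\psi\dashv\vdash_{\Ru_\omega}\varphi'\land\psi$ when $\varphi\dashv\vdash_{\Ru_\omega}\varphi'$, take the derivation of $\varphi'$ from $\varphi$ and substitute, in the fresh variable $q_n$ attached to each rule at the relevant stage, the formula $\psi$ (again after renaming $q_n$ away from the variables occurring in $\psi$); each step $\chi_i\Rightarrow\chi_{i+1}$ via $\frac{\alpha}{\beta}$ becomes $\chi_i\land\psi\Rightarrow\chi_{i+1}\land\psi$ via the instance of $\frac{\alpha\land q_n}{\beta\land q_n}\in\Ru_\omega$ with $q_n:=\psi$. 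This yields $\varphi\land\psi\vdash_{\Ru_\omega}\varphi'\land\psi$ and, by symmetry, inter-derivability. The same argument with the $\lor q_n$ clause gives $\varphi\lor\psi\dashv\vdash_{\Ru_\omega}\varphi'\lor\psi$. Then I would use commutativity: since $\Ru_{\mathsf C}\subseteq\Ru=\Ru_0\subseteq\Ru_\omega$, we have $\varphi\land\psi\dashv\vdash_{\Ru_\omega}\psi\land\varphi$ and likewise for $\lor$, so the same one-sided replacement also works in the second coordinate, and composing the two (via transitivity) gives $\varphi\land\psi\dashv\vdash_{\Ru_\omega}\varphi'\land\psi'$ and $\varphi\lor\psi\dashv\vdash_{\Ru_\omega}\varphi'\lor\psi'$.

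The main obstacle is bookkeeping with the fresh variables $\{q_i:i<\omega\}$: the definition of $\Ru_{n+1}$ introduces a \emph{specific} fresh variable $q_n$ at each stage, so one must be careful that (i) the variable one wants to instantiate is genuinely disjoint from the data it is replaced by, and (ii) the closure properties I invoke ("$\Ru_\omega$ is closed under $\frac{\alpha}{\beta}\mapsto\frac{\neg\beta}{\neg\alpha}$ and under adjoining $\land\psi$ / $\lor\psi$") hold \emph{modulo renaming of variables and passing to substitution instances}, which is legitimate because $\vdash_{\Ru_\omega}$ is by definition closed under substitution instances of its rules. I would state this as a small preliminary remark — that $\Ru_\omega$ is closed under the three operations "negate-and-swap", "conjoin a fresh variable", "disjoin a fresh variable", when these are understood up to substitution — and then the three congruence clauses follow uniformly by the backwards/substituted-derivation construction sketched above. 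Everything else is the routine verification that $\dashv\vdash$ is an equivalence relation.
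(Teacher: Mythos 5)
Your proposal is correct and follows essentially the same route as the paper: the paper's proof likewise transforms an $\Ru_\omega$-derivation of $\varphi\vdash_{\Ru_\omega}\psi$ step by step into derivations of $\varphi\lor\gamma\vdash_{\Ru_\omega}\psi\lor\gamma$, $\varphi\land\gamma\vdash_{\Ru_\omega}\psi\land\gamma$ and $\neg\psi\vdash_{\Ru_\omega}\neg\varphi$, using the closure of $\Ru_\omega$ under the three lifting operations, and then derives the congruence property. You merely make explicit two points the paper leaves implicit, namely the instantiation of the fresh variable $q_n$ and the use of $\Ru_{\mathsf{C}}$ to handle replacement in the second coordinate, both of which are handled correctly.
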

\begin{proof}
By construction, %any derivation 
we have that any $\Ru_\omega$-derivation of 
 $\varphi\vdash_{\Ru_\omega} \psi$ can easily be transformed in derivations showing that
 $\varphi\lor \gamma \vdash_{\Ru_\omega}\psi\lor \gamma$, $\varphi\land \gamma\vdash_{\Ru_\omega} \psi\land \gamma$ and $\neg \psi\vdash_{\Ru_\omega} \neg \varphi$ (cf.~the proof of Lemma~\ref{prop:cong}).
 Hence, if $\varphi_i \dashv\vdash_{\Ru_\omega}\psi_i$ we have $\varphi_1\land\varphi_2\dashv\vdash_{\Ru_\omega}\psi_1\land \psi_2$,
 $\varphi_1\land\varphi_2\dashv\vdash_{\Ru_\omega}\psi_1\land \psi_2$ and $\neg\psi_i\dashv\vdash_{\Ru_\omega}\neg \varphi_i$.  
%{\ {red}
%This follows easily, %by induction on the size of the proof of 
%as any derivation of $\varphi\vdash_{\Ru_{\mathsf{cg}}} \psi$ can be transformed in a derivation of each of the above entailments as each rule can be transformed in the suitable variation. (THIS NEEDS TO BE SAID BETTER--OR NOT SAID AT ALL)}
\end{proof}

%Let $\Ru$ be a set of rules, and let 
%$\{q_i:i<\omega\}$ be a set of fresh variables such that 
%$\{q_i:i<\omega\}\cap \mathsf{var}(\Ru)=\emptyset$. Define:
%\begin{itemize}
% \item[] $\Ru_0 := \Ru$  
%
% \item[] $\Ru_{n+1}: =\{\frac{\varphi\lor q_n}{\psi\lor q_n}:\frac{\varphi}{\psi}\in \Ru_n\}\cup \{\frac{\varphi\land q_n}{\psi\land q_n}:\frac{\varphi}{\psi}\in \Ru_n\}\cup 
%\{\frac{\neg \psi}{\neg \varphi}:\frac{\varphi}{\psi}\in \Ru_n\}$
%
%\item[] $\Ru_\omega:=\bigcup_{n<\omega}\Ru_n$
%\end{itemize}
%
%Clearly, by structurality, we have 
%$\vdash_{\Ru_\omega}= \, \vdash_{\Ru_{\mathsf{cg}}}$.
%% $R^+\subseteq\, \vdash_{R_\omega}=\vdash_{R^+}$.
%
%{\ {red} Can we just introduce $\Ru_\omega$ straight away (and state Lemma~\ref{cong} about $\Ru_\omega$)? S: YES}

The following lemma is an immediate consequence of the definition of 
$\Ru^{\Eq}$.
 
\begin{lemma}\label{lem:Req}
Let $\vdash$ be a consequence relation over $\Fm$.
 If $\Ru^{\Eq}\subseteq~ \vdash$ and $\dashv\vdash$ is a congruence of $\Fm$, 
 then the quotient $\Fm/{\dashv\vdash}$ satisfies all the %(in)
 equalities in $\Eq$.
 In particular, if $\mathsf{DN}\subseteq \Eq$, then  
 $\Fm/{\dashv\vdash}$ is a
 distributive lattice with negation (Definition~\ref{def:dn})
% $\nnot$-lattice (algebra) 
 with the order given by $\vdash$.
\end{lemma}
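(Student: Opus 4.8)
The statement to establish is Lemma~\ref{lem:Req}: assuming $\Ru^{\Eq}\subseteq\,\vdash$ and that $\dashv\vdash$ is a congruence of $\Fm$, the quotient algebra $\Fm/{\dashv\vdash}$ satisfies every equation in $\Eq$; and when $\DNE\subseteq\Eq$, this quotient is a distributive lattice with negation whose lattice order is induced by $\vdash$. The approach is entirely routine quotient-algebra bookkeeping, so I will just lay out the steps.

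\textbf{First step: the quotient is well-defined.} Since $\dashv\vdash$ is assumed to be a congruence of the formula algebra $\Fm$, the quotient $\Fm/{\dashv\vdash}$ is an algebra of the same similarity type, with operations defined on equivalence classes $[\varphi]$ in the usual way ($[\varphi]\land[\psi]:=[\varphi\land\psi]$, and likewise for $\lor$, $\neg$, and the constants $[\bot],[\top]$). Nothing here needs the hypothesis $\Ru^{\Eq}\subseteq\,\vdash$; it is just the definition of a quotient by a congruence.

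\textbf{Second step: verifying the equations of $\Eq$.} Fix an equation $\varphi_i\approx\psi_i$ in $\Eq$. The associated bidirectional rule $\Tfrac{\varphi_i}{\psi_i}$, which unpacks to the pair $\{\frac{\varphi_i}{\psi_i},\frac{\psi_i}{\varphi_i}\}$, belongs to $\Ru^{\Eq}$, hence to $\vdash$ by hypothesis; so $\varphi_i\dashv\vdash\psi_i$, i.e. $[\varphi_i]=[\psi_i]$ in the quotient. To conclude that the quotient \emph{satisfies the equation} $\varphi_i\approx\psi_i$ as a universally quantified identity, I need this to hold under every assignment of the variables, not just the canonical one sending $x\mapsto[x]$. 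This follows because equations are closed under substitution: for any substitution $\sigma$ on $Fm$, the rule $\frac{\sigma\varphi_i}{\sigma\psi_i}$ (and its converse) is again an instance of the rule schema $\Tfrac{\varphi_i}{\psi_i}$, hence is in $\vdash$ by structurality, so $[\sigma\varphi_i]=[\sigma\psi_i]$; and every assignment of quotient elements to variables is realized by $x\mapsto[\sigma x]$ for a suitable $\sigma$. (If one prefers to keep structurality implicit in ``rule schema'', one can instead invoke the standard fact that a congruence-quotient satisfies an identity iff the identity holds on generators, using that the variable classes generate $\Fm/{\dashv\vdash}$.) Either way, $\Fm/{\dashv\vdash}\vDash\varphi_i\approx\psi_i$ for all $i\in I$.

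\textbf{Third step: the $\DNE$ case and the order.} If $\DNE\subseteq\Eq$, then in particular all of (L\ref{Itm:L1})--(L\ref{Itm:L6}) and (N\ref{Itm:N1})--(N2) are among the equations verified in the second step, so by Definition~\ref{def:lat} and Definition~\ref{def:dn} the quotient $\Fm/{\dashv\vdash}$ is a distributive lattice with negation. Finally, I should check that the lattice order of this algebra coincides with the order induced by $\vdash$, i.e. that $[\varphi]\leq[\psi]$ iff $\varphi\vdash\psi$ (here I read ``the order given by $\vdash$'' as $\varphi\vdash\psi$; on a selfextensional semilattice-based logic this matches the intended reading). In the quotient, $[\varphi]\leq[\psi]$ means $[\varphi]\land[\psi]=[\varphi]$, i.e. $\varphi\land\psi\dashv\vdash\varphi$; the direction $\varphi\vdash\varphi\land\psi$ gives $\varphi\vdash\psi$ by composing with the (derivable, given the lattice equations now available) rule $\frac{\varphi\land\psi}{\psi}$, and conversely $\varphi\vdash\psi$ together with the meet-semilattice structure yields $\varphi\vdash\varphi\land\psi$, while $\varphi\land\psi\vdash\varphi$ is the other half of a derivable lattice rule. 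Thus the two orders agree.

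\textbf{Main obstacle.} There is no serious obstacle: the lemma is labelled ``an immediate consequence'' in the text, and the proof is a direct unwinding of definitions. The only point requiring a modicum of care is the passage in the second step from ``$[\varphi_i]=[\psi_i]$ under the canonical assignment'' to ``the equation holds universally in the quotient'' — this rests on structurality of $\vdash$ (closure of $\Ru^{\Eq}$ under substitution instances) together with the fact that the variable-classes generate $\Fm/{\dashv\vdash}$, and it is worth stating explicitly rather than leaving it silent.
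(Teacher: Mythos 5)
Your first two steps are correct and are exactly the argument the paper has in mind: the paper states this lemma \emph{without proof}, introducing it as ``an immediate consequence of the definition of $\Ru^{\Eq}$'', and the content it relies on later is precisely what you establish --- each $\varphi_i\approx\psi_i\in\Eq$ yields $[\varphi_i]=[\psi_i]$ because the corresponding bidirectional rule lies in $\vdash$, and universality follows from structurality (substitution instances of the rules in $\Ru^{\Eq}$ remain in $\vdash$) together with the fact that every element of $\Fm/{\dashv\vdash}$ is of the form $[\sigma x]$. Your remark that this last passage deserves to be made explicit is fair.

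The one place where you overreach is the third step. The rules $\frac{p\land q}{q}$ and $\frac{p\,,\,q}{p\land q}$ are \emph{not} derivable from the hypotheses of the lemma: those hypotheses only give $\Ru^{\Eq}\subseteq\,\vdash$ and that $\dashv\vdash$ is a congruence, and they force no non-symmetric consequences. Concretely, define $\Gamma\vdash\varphi$ iff some $\gamma\in\Gamma$ satisfies $\VV_{\Eq}\vDash\gamma\approx\varphi$; this is a structural consequence relation containing $\Ru^{\Eq}$ whose inter-derivability relation is a congruence, yet $p\land q\not\vdash q$. (Projection only becomes derivable once $\Ru_{\mathsf{F}}$ is added, as in the proof of Theorem~\ref{thm:compl0}.) So the equivalence ``$[\varphi]\leq[\psi]$ iff $\varphi\vdash\psi$'' cannot be established under the lemma's hypotheses; the clause ``with the order given by $\vdash$'' should be read as saying that the lattice order of the quotient is the one determined by the relation $\dashv\vdash$ via the meet (i.e.\ $[\varphi]\leq[\psi]$ iff $\varphi\land\psi\dashv\vdash\varphi$), which requires no further argument. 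This does not affect the part of the lemma that is actually used later, namely that $\Fm/{\dashv\vdash}$ belongs to $\VV_{\Eq}$.
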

 %
%\begin{proof}
%Easy from the definition of $R^{\Eq}$.
%\end{proof}

%Given a set of rules $\Ru$, we let 
%$\Ru_{\mathsf{F}} : = \Ru_\omega \cup  \{\frac{}{\top},\frac{p\,,\,q}{p\land q}\}$.
%Also, g
Given a set of equations $\Eq \subseteq Fm \times Fm$,
we denote by $\VV_{\Eq}$ the variety axiomatized by $\Eq$.

\begin{theorem}
\label{thm:compl0}
Let  $\Eq \subseteq Fm \times Fm$ be a set of equations such that
 $\mathsf{DN}\subseteq \Eq$. 
 Then
 $\Ru^{\Eq}_\omega \cup \Ru_{\mathsf{F}}$ %=R^{\Eq}_\omega\cup \{\frac{}{\top},\frac{p\,,\,q}{p\land q}\}
  axiomatizes $\vdash^\leq_{\VV_{\Eq}}$.
 
% (dropping the request of being non-empty filter in def of $\vdash^\leq_{\VV_{\Eq}}$ corresponds to dropping $\frac{}{\top}$)

\end{theorem}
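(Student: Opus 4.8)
The plan is to establish the two inclusions between $\vdash_{\Ru^{\Eq}_\omega \cup \Ru_{\mathsf{F}}}$ and $\vdash^\leq_{\VV_{\Eq}}$ separately, soundness being routine and completeness being the substance.

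\textbf{Soundness.} First I would check that every rule in $\Ru^{\Eq}_\omega \cup \Ru_{\mathsf{F}}$ is sound with respect to $\vdash^\leq_{\VV_{\Eq}}$. The rules in $\Ru_{\mathsf{F}}$ are sound because the designated sets in the defining matrices are non-empty lattice filters (this is the very definition of $\vdash^\leq_{\K}$). For $\Ru^{\Eq}_\omega$, one argues by induction on the stage $n$ in the construction of $\Ru_\omega$. The base case $\Ru^{\Eq}_0 = \Ru^{\Eq}$ holds because each bidirectional rule $\Tfrac{\varphi_i}{\psi_i}$ comes from an equation $\varphi_i \approx \psi_i$ valid in $\VV_{\Eq}$, so $\varphi_i \leq \psi_i$ and $\psi_i \leq \varphi_i$ hold in every algebra of $\VV_{\Eq}$. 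For the inductive step, one uses the fact that in any distributive lattice with negation, $a \leq b$ implies $a \lor c \leq b \lor c$, $a \land c \leq b \land c$, and $\neg b \leq \neg a$ (the last because $\neg$ is order-reversing in $\DN$, as noted after Definition~\ref{def:dn}); since $\Ru_{n+1}$ is obtained from $\Ru_n$ by exactly these three operations, soundness propagates. Hence $\vdash_{\Ru^{\Eq}_\omega \cup \Ru_{\mathsf{F}}} \subseteq \vdash^\leq_{\VV_{\Eq}}$.

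\textbf{Completeness.} This is the main step. The standard strategy is a Lindenbaum--Tarski argument applied to $\vdash_{\Ru^{\Eq}_\omega \cup \Ru_{\mathsf{F}}}$. Since $\DNE \subseteq \Eq$ we have $\Ru_{\mathsf{C}} \subseteq \Ru^{\Eq}$, so by Lemma~\ref{cong} the inter-derivability relation $\dashv\vdash_{\Ru^{\Eq}_\omega}$ is a congruence of $\Fm$; the same congruence works for the larger calculus since adding $\Ru_{\mathsf{F}}$ does not add new inter-derivabilities between single formulas that would break congruence (more carefully: one checks $\dashv\vdash_{\Ru^{\Eq}_\omega \cup \Ru_{\mathsf{F}}}$ is still a congruence, which follows because $\Ru_{\mathsf F}$ is already compatible). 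By Lemma~\ref{lem:Req}, the quotient algebra $\A := \Fm/{\dashv\vdash_{\Ru^{\Eq}_\omega \cup \Ru_{\mathsf{F}}}}$ is a member of $\VV_{\Eq}$, with the lattice order induced by $\vdash$. Now suppose $\Gamma \nvdash_{\Ru^{\Eq}_\omega \cup \Ru_{\mathsf{F}}} \phi$. One forms a matrix $\la \A, D \ra$ where $D$ is the image under the quotient map of the theory $\{\psi : \Gamma \vdash_{\Ru^{\Eq}_\omega \cup \Ru_{\mathsf{F}}} \psi\}$ (or, if $\Gamma = \emptyset$, one uses the set of theorems). Using the rules of $\Ru_{\mathsf{F}}$ one checks that $D$ is a non-empty lattice filter of $\A$: it contains $\top$ (the rule $\frac{}{\top}$), is closed under $\land$ (the rule $\frac{p,q}{p\land q}$), and is upward closed (the rule $\frac{p}{p\lor q}$ together with the lattice identities ensures $a \in D$ and $a \leq b$ give $b = a \lor b \in D$). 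The quotient map is then a homomorphism $Fm \to A$ sending $\Gamma$ into $D$ but $\phi$ outside $D$, witnessing $\Gamma \nvdash^\leq_{\VV_{\Eq}} \phi$. This yields the converse inclusion and completes the proof.

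\textbf{Anticipated obstacle.} The only delicate point I foresee is verifying that $D$ as defined is genuinely upward closed with respect to the lattice order of $\A$, and that this order coincides with the consequence order $\vdash$ on the quotient. This requires knowing that the derived rules $\frac{\varphi}{\varphi \lor \gamma}$ and $\frac{\varphi \land \gamma}{\varphi}$ (and their consequences) are available --- which they are, since the lattice axioms are among $\DNE \subseteq \Eq$ and hence among $\Ru^{\Eq}$, so the quotient really is a lattice and $a \leq b$ in $\A$ is equivalent to $\varphi \vdash_{\Ru^{\Eq}_\omega \cup \Ru_{\mathsf{F}}} \psi$ for representatives. Once this bookkeeping is in place, the argument is the familiar one; I would present the filter-closure verification carefully and refer to Lemmas~\ref{cong} and~\ref{lem:Req} for the algebraic content. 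The result then specializes to $\Ru_\omega^{\DNE} \cup \Ru_{\mathsf F}$ axiomatizing $\vdash^\leq_{\DN}$, which is Corollary~\ref{cor:compdn}.
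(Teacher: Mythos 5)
Your proposal is correct in substance and follows the same route as the paper: soundness by inspection of the rules, completeness by a Lindenbaum--Tarski quotient using Lemmas~\ref{cong} and~\ref{lem:Req}, with the filter conditions on $D$ secured by $\Ru_{\mathsf{F}}$. The one place where you diverge from the paper is the choice of congruence, and it is the one place where your argument leaves a real obligation unpaid: you quotient by $\dashv\vdash_{\Ru^{\Eq}_\omega \cup \Ru_{\mathsf{F}}}$ and assert that this is still a congruence ``because $\Ru_{\mathsf{F}}$ is already compatible,'' but Lemma~\ref{cong} only covers $\dashv\vdash_{\Ru^{\Eq}_\omega}$, and $\Ru_{\mathsf{F}}$ contains rules (the axiom $\frac{}{\top}$ and the two-premiss rule $\frac{p\,,\,q}{p\land q}$) that fall outside the formula-to-formula closure construction, so compatibility of the enlarged inter-derivability relation with $\nnot$ is not automatic. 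The paper avoids this entirely: it quotients by the \emph{smaller} relation $\equiv\;=\;\dashv\vdash_{\Ru^{\Eq}_\omega}$ (known to be a congruence by Lemma~\ref{cong}), and takes $F = \Gamma^{\vdash}/{\equiv}$ where $\Gamma^{\vdash}$ is the theory generated by $\Gamma$ in the \emph{full} calculus; compatibility of $F$ with $\equiv$ is then immediate from $\vdash_{\Ru^{\Eq}_\omega}\,\subseteq\,\vdash$. Your version can be repaired (once soundness and Lemma~\ref{lem:Req} are in hand, one checks that $\phi\dashv\vdash\psi$ implies $\phi\approx\psi$ is valid in $\VV_{\Eq}$, which in turn implies $\phi\equiv\psi$ since $\Fm/{\equiv}\in\VV_{\Eq}$, so the two relations coincide), but as written that justification is missing; the cleaner fix is simply to adopt the paper's asymmetric choice of congruence and designated set.
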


\begin{proof}
%Let $\Ru' := \Ru^{\Eq}_\omega \cup \Ru_{\mathsf{F}}$ and $\vdash := \vdash_{\Ru'}$.
Let $\vdash := \, \vdash_{\Ru^{\Eq}_\omega \cup \Ru_{\mathsf{F}}}$.
It is clear that 
%all rules in $\Ru'$ are contained in $\vdash^\leq_{\VV_{\Eq}}$.
$\vdash \, \subseteq \, \vdash^\leq_{\VV_{\Eq}}$.
To prove completeness, 
assume $\Gamma\not\vdash \varphi$ for some $\Gamma \cup \{\phi\} \subseteq Fm$. 
By Lemma~\ref{cong} and the fact that $R_{\mathsf{C}}\subseteq R^\mathsf{DN} \subseteq R^{\Eq}$, the relation 
$\dashv \vdash_{R^\Eq_\omega}$
is a congruence of $\Fm$, which in this proof we denote  by $\equiv$.
%For all $\psi_1, \psi_2 \in Fm$, we write $\psi_1{\equiv}\psi_2$ whenever  $\psi_1 \dashv \vdash_{R^\Eq_\omega} \psi_2$.
Consider the matrix $\tuple{\Fm/{\equiv},F}$ where 
%$F=\Gamma^{\vdash_{R^{\mathsf{Eq}}_{\mathsf{F}}}}/{\equiv}$
$F= %\text{Th}_{\vdash} (
\Gamma^{\vdash}\! /{\equiv}$
(observe that %The fact that 
$\vdash_{R^\Eq_\omega}\,\subseteq\, \vdash$ implies that $F$ is compatible with $\equiv$).
%$\psi_1{\equiv}\psi_2$ whenever  $\psi_1\vdash_{R^\Eq_\omega} \psi_2$ and  $\psi_2\vdash_{R^\Eq_\omega} \psi_1$...
% (we must use Lemma~\ref{cong} and thus demand that $R_{\mathsf{C}}\subseteq R^\mathsf{DN}$ which is true because it is among the equations...comment...)
It follows from $R^{\Eq}\subseteq\, \vdash$ and Lemma~\ref{lem:Req} that 
 $\Fm/{\equiv}$ is in $\VV_{\Eq}$. In particular,  $\Fm/{\equiv}$ is a lattice.
Thus, to show that $F$ is a  non-empty lattice filter, it suffices to use the rules in $\Ru_{\mathsf{F}}$.
% the rule $\frac{p}{p\lor q}$ which is in $\Ru^{\Eq}$ and 
%the rule $\frac{p\,,\,q}{p\land q}$.
%  (in particular, closure for conjunction comes from this new BALANCED rule...)
%Also observe that is not empty comes from $\frac{}{\top}$! (if we don't want we can remove, those are the pseudo-theorem...)
To conclude the proof, observe that the canonical projection map $\pi \colon Fm \to Fm/{\equiv}$
is a valuation that satisfies all formulas in $\Gamma$ but not $\phi$.
\end{proof}

\begin{corollary}
\label{cor:compdn}

$\vdash_{\Ru^{\mathsf{DN}}_\omega\cup \Ru_{\mathsf{F}}}=\,\vdash^\leq_{\DN}$.

\end{corollary}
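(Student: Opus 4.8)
The plan is simply to specialize Theorem~\ref{thm:compl0} to the case $\Eq := \DNE$. Since the hypothesis of that theorem requires only that $\mathsf{DN} \subseteq \Eq$, and here this inclusion holds as an equality, the theorem applies verbatim and yields that $\Ru^{\DNE}_\omega \cup \Ru_{\mathsf{F}}$ axiomatizes $\vdash^{\leq}_{\VV_{\DNE}}$. It then remains only to observe that $\VV_{\DNE} = \DN$, which is nothing more than the definition of $\DN$ as the variety axiomatized by the equations of Definition~\ref{def:dn}. Rewriting $\vdash^{\leq}_{\VV_{\DNE}}$ as $\vdash^{\leq}_{\DN}$ gives exactly the claimed identity $\vdash_{\Ru^{\DNE}_\omega\cup \Ru_{\mathsf{F}}}=\,\vdash^{\leq}_{\DN}$.

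There is no real obstacle here: all the substantive content is already contained in Theorem~\ref{thm:compl0} (soundness being straightforward, and completeness resting on Lemma~\ref{cong}, which guarantees that $\dashv\vdash_{\Ru^{\DNE}_\omega}$ is a congruence of $\Fm$, together with Lemma~\ref{lem:Req}, which places the Lindenbaum--Tarski quotient in $\DN$, and the rules $\Ru_{\mathsf{F}}$, which force the relevant deductively closed set to be a non-empty lattice filter). The only point worth flagging is the side condition $\Ru_{\mathsf{C}} \subseteq \Ru^{\DNE}$ needed to invoke Lemma~\ref{cong}; but this is precisely the footnote observation that the commutativity rules $\Ru_{\mathsf{C}}$ come for free from the lattice equations~(L\ref{Itm:L1}) already present in $\DNE$, so nothing extra has to be adjoined and the corollary follows at once.
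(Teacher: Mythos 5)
Your proposal is correct and is exactly how the paper obtains this corollary: it is the instance of Theorem~\ref{thm:compl0} with $\Eq = \DNE$, together with the observation (already made in the paper's footnote) that $\Ru_{\mathsf{C}} \subseteq \Ru^{\DNE}$ so Lemma~\ref{cong} applies. Nothing further is needed.
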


Recall that an \emph{atomic formula} is a propositional variable or a constant belonging to our language ($\bot$ or $\top$).

\begin{definition}
The \emph{$\neg$-depth} of an occurrence of an atomic formula $\varphi$ in $\psi$ is the number of $\neg$-headed subformulas of $\psi$ %$\neg$with main connective $\neg$ 
with that occurrence of $\phi$. In other words, we consider the tree representation of $\psi$ and a leaf labelled $\phi$ (representing the ocurence of interest) and count the
number of $\neg$-labelled nodes that are ancestors of that leaf. % marking the ocurrence of .
The $\neg$-depth of a formula $\psi$ is the maximum $\neg$-depth of the atomic subformulas of $\psi$. 
The $\neg$-depth of a set of rules $\Ru$ is the maximum $\neg$-depth among the formulas in $\Ru$.
%A set of rules has maximum $\neg$-depth
We say that a rule $\ru$ is {\em $\neg$-balanced}  if all occurrences of all variables in
$\ru$ have the same the same $\nnot$-depth.
We say that a set of rules $\Ru$ is {\em $\neg$-balanced} if every  rule $ \ru \in \Ru$
is  $\neg$-balanced.
%occurrence of a variable %, or 0-ary connective, 
%in each rule of $R$ is at the same $\nnot$-depth.
\end{definition}

We shall now focus on invariants of logics axiomatized by $\neg$-balanced rules
having  $\nnot$-depth $k < \omega$. This will allow us to single out certain non-finitely based logics
extending $\vdash^\leq_{\DN}$. To this end, we shall also need the following function.

Let $\{q_{\varphi}:\varphi\in Fm\}$ be a fresh set of variables.
For all $k<\omega$ and for all $\varphi,\psi\in Fm$, let $f_{k}: Fm \to  Fm$ be given by: 
% $f_{k}(\top)=\top$, $f_{k}(p)=p$,
% $f_{k}(\varphi\land \psi)=f_{k}\varphi\land f_{k}\psi$, $f_{k}(\varphi\lor \psi)=f_{k}\varphi\lor f_{k}\psi$
% and $$f_{k}(\neg \varphi)= 
%\begin{cases}
%q_{\neg \varphi} &\mbox{ if } k=0\\
% \neg f_{k-1}(\varphi) &\mbox{ if } k>0
%\end{cases}
%$$
\begin{align*}
f_{k}(\top)&: =\top\\
f_{k}(\bot)&:=\bot\\
 f_{k}(p)&:=p\\
 f_{k}(\varphi\land \psi)&:=f_{k}(\varphi)\land f_{k}(\psi)\\
f_{k}(\varphi\lor \psi)&:=f_{k}(\varphi)\lor f_{k}(\psi)\\
 f_{k}(\neg \varphi)&:= 
\begin{cases}
q_{\neg \varphi} &\mbox{ if } k=0\\
 \neg f_{k-1}(\varphi) &\mbox{ if } k>0
\end{cases}
\end{align*}
Essentially,  $f_k$  replaces in $\phi$ every subformula $\psi$ occurring at  $\neg$-depth $k$ whose main connective is $\neg$ by a fresh  variable indexed by $\psi$.
We extend $f_k$ to sets of formulas, rules 
and sets of rules in the expected way: $f_k(\Gamma)=\{f_k(\varphi):\varphi\in \Gamma\}$, $f_k(\frac{\Gamma}{\varphi})=\frac{f_k(\Gamma)}{f_k(\varphi)}$ and 
$f_k(\mathcal{R})=\{f(\Ru): \ru \in \mathcal{R}\}$.  

\begin{lemma}\label{zip}
Let $\Ru$ be a  set of rules that is
$\neg$-balanced and has $\nnot$-depth $k$.
 Then $\Gamma\vdash_{\Ru} \varphi$ implies $f_n(\Gamma)\vdash_{\Ru} f_n(\varphi)$ for every $n>k$.
  
\end{lemma}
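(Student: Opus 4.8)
The statement to prove is Lemma~\ref{zip}: if $\Ru$ is $\neg$-balanced of $\nnot$-depth $k$, then $\Gamma \vdash_{\Ru} \varphi$ implies $f_n(\Gamma) \vdash_{\Ru} f_n(\varphi)$ for every $n > k$. The natural approach is induction on the length of a Hilbert derivation of $\varphi$ from $\Gamma$ using the rules in $\Ru$. The base case is when $\varphi \in \Gamma$, which is immediate since then $f_n(\varphi) \in f_n(\Gamma)$. The inductive step: $\varphi$ is obtained by an instance $\frac{\sigma(\delta_1),\ldots,\sigma(\delta_m)}{\sigma(\delta)}$ of some rule $\frac{\delta_1,\ldots,\delta_m}{\delta} \in \Ru$, via a substitution $\sigma$, where each $\sigma(\delta_j)$ has already been derived from $\Gamma$; by the induction hypothesis $f_n(\Gamma) \vdash_{\Ru} f_n(\sigma(\delta_j))$ for each $j$. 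It then suffices to show that there is a substitution $\tau$ such that $f_n(\sigma(\delta_j)) = \tau(\delta_j)$ for all $j$ and $f_n(\sigma(\delta)) = \tau(\delta)$, so that the very same rule of $\Ru$ applies again (via $\tau$) to produce $f_n(\sigma(\delta))= f_n(\varphi)$ from the $f_n(\sigma(\delta_j))$.

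\textbf{The key commutation lemma.} The heart of the argument is therefore a claim of the form: \emph{if $\vartheta$ is a formula of $\nnot$-depth $\leq k < n$ in which every occurrence of every variable sits at the same $\nnot$-depth $d$ (so $\vartheta$ is ``$\neg$-balanced'' as the variables of a rule in $\Ru$ must be), and $\sigma$ is any substitution, then $f_n(\sigma(\vartheta)) = \tau(\vartheta)$, where $\tau$ is the substitution sending each variable $p$ of $\vartheta$ to $f_{n-d}(\sigma(p))$.} This is proved by induction on the structure of $\vartheta$. The cases for $\top, \bot$ and for $\land, \lor$ are routine pushing of $f_n$ through the connectives, using that $\land,\lor$ do not change the $\nnot$-depth. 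The $\neg$ case is where balancedness is used: if $\vartheta = \neg \vartheta_1$ with every variable of $\vartheta_1$ at $\nnot$-depth $d-1 \geq 0$, then since the $\nnot$-depth of $\vartheta$ is $\leq k < n$ we have $n > 0$, so $f_n(\sigma(\neg\vartheta_1)) = \neg f_{n-1}(\sigma(\vartheta_1))$, and the inner induction hypothesis (applied with $n-1$ in place of $n$, still $> k-1 \geq d-1$) gives $f_{n-1}(\sigma(\vartheta_1)) = \tau(\vartheta_1)$ with the \emph{same} $\tau$ because $(n-1)-(d-1) = n-d$. The crucial point that balancedness buys us is precisely that \emph{all} occurrences of a given variable $p$ in $\vartheta_j$ must be assigned the same value $f_{n-d}(\sigma(p))$ by $\tau$ — if two occurrences of $p$ sat at different $\nnot$-depths, the recursion would demand $f_{n-d}(\sigma(p))$ at one and $f_{n-d'}(\sigma(p))$ at another, and these need not coincide, so no single substitution $\tau$ would work. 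Applying this claim to $\delta_1,\ldots,\delta_m$ and to $\delta$ (all balanced with the same common depth per variable, since the rule is $\neg$-balanced, and all of $\nnot$-depth $\leq k < n$) yields a single $\tau$ simultaneously witnessing $f_n(\sigma(\delta_j)) = \tau(\delta_j)$ and $f_n(\sigma(\delta)) = \tau(\delta)$, closing the induction.

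\textbf{Main obstacle.} The only genuinely delicate point is the one just highlighted: making sure that the substitution $\tau$ extracted in the $\neg$-case of the structural induction is \emph{well-defined and global}, i.e.\ that it assigns a single formula to each variable regardless of where that variable occurs — this is exactly where the $\neg$-balancedness hypothesis is indispensable, and it must be stated carefully (one should phrase the structural-induction claim so that $\tau$ is determined by the single common $\nnot$-depth of each variable in the subformula being analyzed). A secondary bookkeeping subtlety is that the fresh variables $q_\psi$ introduced by $f_n$ are disjoint from $\mathsf{var}(\Ru)$, so applying the rule of $\Ru$ again via $\tau$ causes no variable clashes; this is automatic from the disjointness stipulation in the definition of the $q_\psi$'s and of the $q_i$'s used in building $\Ru_\omega$, but it is worth a one-line remark. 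Once the commutation lemma is in place, the outer induction on derivation length is entirely mechanical.
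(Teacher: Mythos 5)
Your argument is essentially the paper's own proof: the paper likewise defines the substitution $\sigma'(p)=f_{n-j}(\sigma(p))$, where $j$ is the $\nnot$-depth of $p$ in the rule, notes that $\nnot$-balancedness is exactly what makes $\sigma'$ well defined, and concludes that every $\Ru$-derivation transforms step by step. One small correction to the statement of your commutation claim: you require that \emph{all} variables of the rule sit at a single common depth $d$, whereas $\nnot$-balancedness only guarantees that each variable's occurrences share a depth, and the rule sets to which the lemma is later applied (e.g.\ $\Ru_n\cup\Ru_{\mathsf{F}}$, whose rules mix the fresh depth-$0$ variables $q_i$ and $q$ with variables at depth up to $n+k$) genuinely need the per-variable version. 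The fix is mechanical and is already implicit in your ``main obstacle'' remark: set $\tau(p)=f_{\,n-d_p}(\sigma(p))$ with $d_p$ the depth of $p$, and your structural induction goes through unchanged since $n-d_p$ is preserved when both $n$ and $d_p$ drop by one under $\neg$.
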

\begin{proof}
Since $n > k$, for each rule  $\frac{\Delta}{\psi}\in \Ru$, we have  $f_n(\Delta)=\Delta$ and  $f_n(\psi)=\psi$. 
Further, for every substitution $\sigma$ (i.e.~for every endomorphism $\sigma \colon Fm \to Fm$)
% $f_n(\Delta)=\Delta^{\sigma}$ and $f_n(\psi)=\psi^{\sigma}$, 
 there is a substitution 
 $\sigma'$ such that $f_n(\Delta^{\sigma})=f_n(\Delta)^{\sigma'}=\Delta^{\sigma'}$ and $f_n(\psi^\sigma)=f_n(\psi)^{\sigma'}=\psi^{\sigma'}$, where
 $\sigma'(p)=f_{n-j}(\sigma(p))$ and $j$ is the $\nnot$-depth of $p$ in $\frac{\Delta}{\psi}$
 (note that $\sigma'$ is well defined because $\Ru$ is $\nnot$-balanced).
 It is then easy to see (cf.~the proof of Lemma~\ref{prop:cong})  that every $\Ru$-derivation of $\phi$ from $\Gamma$ can be transformed into a derivation of 
 $f_n(\phi)$ from $f_n(\Gamma)$.
\end{proof}

%{\ {red} providing a syntactic sufficient conditions for negative result (together with the target logic not satisfying some rule in $f_n(\Ru^{\Eq}_{n+1})$) maybe there should be an extra lemma with this statement! - now is part of the theorem extensions axiomatized by rules as per zipper Lemma}

\begin{lemma}\label{zip2}
Let $\Ru\subseteq Fm \times Fm$ %be a  set of formula-to-formula rules
be $\neg$-balanced and having $\nnot$-depth $k$. 
If $f_{n+k}(\Ru_{n})\not \subseteq \, \vdash_{\Ru_\omega\cup \Ru_{\mathsf{F}}}$
for every $n<\omega$, 
 then the logic $\vdash_{\Ru_\omega\cup \Ru_{\mathsf{F}}}$ is not finitely based.
%  {\ {red}  $f$ applies to formulas not to rules or sets thereof!!}
\end{lemma}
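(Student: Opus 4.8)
The plan is to argue by contraposition: assume that $\vdash_{\Ru_\omega \cup \Ru_{\mathsf{F}}}$ is finitely based and derive that some $f_{n+k}(\Ru_n)$ must in fact be derivable in $\vdash_{\Ru_\omega \cup \Ru_{\mathsf{F}}}$, contradicting the hypothesis of the lemma. Concretely, suppose $\Ru_0 \subseteq \Ru_\omega \cup \Ru_{\mathsf{F}}$ is a finite set of rules axiomatizing the same logic. Since $\Ru_0$ is finite, every rule in $\Ru_0$ is derivable in $\vdash_{\Ru_\omega \cup \Ru_{\mathsf{F}}}$, hence already derivable using only finitely many of the rules in $\Ru_\omega \cup \Ru_{\mathsf{F}}$; as each $\Ru_n$ is finite (each step of the inductive construction only triples the cardinality), there is some index $N < \omega$ such that $\Ru_0 \subseteq \, \vdash_{\Ru_N \cup \Ru_{\mathsf{F}}}$ and consequently $\vdash_{\Ru_\omega \cup \Ru_{\mathsf{F}}} \, = \, \vdash_{\Ru_N \cup \Ru_{\mathsf{F}}}$.

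Next I would analyze the $\nnot$-depth of the finite rule set $\Ru_N \cup \Ru_{\mathsf{F}}$. The rules in $\Ru_{\mathsf{F}}$ are $\nnot$-free, and by construction $\Ru_N$ is obtained from $\Ru$ (which has $\nnot$-depth $k$) by $N$ applications of the operations adding disjuncts/conjuncts and prefixing $\nnot$; since prefixing $\nnot$ can only occur $N$ times, $\Ru_N$ has $\nnot$-depth at most $N + k$. Moreover each application of the inductive step preserves $\nnot$-balance (adding a disjunct or conjunct of a fresh variable does not disturb the depths of old variables, and prefixing $\nnot$ shifts every depth by one uniformly), so $\Ru_N$, and hence $\Ru_N \cup \Ru_{\mathsf{F}}$, is $\nnot$-balanced with $\nnot$-depth at most $N+k$. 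Now apply Lemma~\ref{zip} to the logic $\vdash_{\Ru_N \cup \Ru_{\mathsf{F}}}$ with the level $n = N+1 > N+k$? — here is where I need to be careful about which inequality to invoke; the point is to choose a level strictly above the $\nnot$-depth of the axiomatizing set so that $f$ at that level leaves every axiom fixed yet acts nontrivially on the derived rules $\Ru_N$ themselves.

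The key move is then: take any rule $\frac{\varphi}{\psi} \in \Ru_N$. It is derivable in $\vdash_{\Ru_N \cup \Ru_{\mathsf{F}}}$ (trivially, in one step), so by Lemma~\ref{zip} applied to the $\nnot$-balanced set $\Ru_N \cup \Ru_{\mathsf{F}}$ of $\nnot$-depth $\le N+k$ and the level $N+k$ (using that $N+k$ exceeds the relevant depth in the way Lemma~\ref{zip} requires), we get $f_{N+k}(\varphi) \vdash_{\Ru_N \cup \Ru_{\mathsf{F}}} f_{N+k}(\psi)$, i.e.\ $f_{N+k}(\frac{\varphi}{\psi})$ is derivable. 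Since this holds for every rule of $\Ru_N$, we obtain $f_{N+k}(\Ru_N) \subseteq \, \vdash_{\Ru_N \cup \Ru_{\mathsf{F}}} \, = \, \vdash_{\Ru_\omega \cup \Ru_{\mathsf{F}}}$, which directly contradicts the standing hypothesis $f_{n+k}(\Ru_n) \not\subseteq \, \vdash_{\Ru_\omega \cup \Ru_{\mathsf{F}}}$ for all $n$, taking $n = N$. This completes the contraposition.

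The main obstacle I anticipate is getting the bookkeeping on $\nnot$-depths exactly right so that Lemma~\ref{zip} applies cleanly: one must verify that the finite subset extracted above is genuinely contained in $\Ru_N \cup \Ru_{\mathsf{F}}$ for a single $N$ (not a union over the finitely many axioms with different indices — but that is fine, just take the max), that $\nnot$-balance really is preserved along the construction of the $\Ru_n$, and that the level at which we apply $f$ strictly dominates the $\nnot$-depth of the axiomatizing rule set, as Lemma~\ref{zip} demands ``$n > k$'' with $k$ the depth of the rules actually used. A secondary subtlety is that $\Ru_{\mathsf{F}}$ must be included in the rule set when invoking Lemma~\ref{zip}; since its rules are $\nnot$-free they do not raise the $\nnot$-depth and are trivially $\nnot$-balanced, so they cause no trouble, but this should be noted explicitly.
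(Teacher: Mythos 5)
Your overall strategy is the right one and is essentially the paper's: the paper phrases the argument as showing that the chain $\vdash_n\,=\,\vdash_{\Ru_n\cup\Ru_{\mathsf{F}}}$ is strictly increasing, which, combined with the compactness observation you make explicit (a finite axiomatization would already be derivable from $\Ru_N\cup\Ru_{\mathsf{F}}$ for a single $N$, forcing the chain to stabilize), is exactly your contraposition. The bookkeeping showing that $\Ru_N\cup\Ru_{\mathsf{F}}$ is $\nnot$-balanced of $\nnot$-depth at most $N+k$ is also fine.

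The gap is in the final step, precisely at the point you flagged and then waved away. You invoke Lemma~\ref{zip} at level $N+k$ for derivations over the rule set $\Ru_N\cup\Ru_{\mathsf{F}}$, whose $\nnot$-depth is $N+k$; the lemma requires the level to \emph{strictly} exceed the depth, and $N+k>N+k$ is false. Worse, the conclusion you aim for carries no content: $f_j$ alters a formula only if it contains a $\nnot$-headed subformula at $\nnot$-depth $j$, which forces that formula to have $\nnot$-depth at least $j+1$; since every formula occurring in $\Ru_N$ has depth at most $N+k$, the map $f_{N+k}$ is the identity on $\Ru_N$, so ``$f_{N+k}(\Ru_N)\subseteq\,\vdash_\omega$'' reduces to the trivial ``$\Ru_N\subseteq\,\vdash_\omega$'' and needs no Lemma~\ref{zip} at all. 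The substantive move --- the one the paper makes --- is to go one tier up: from $\vdash_\omega\,=\,\vdash_N$ one gets $\Ru_{N+1}\subseteq\,\vdash_N$, and since the axiomatizing set $\Ru_N\cup\Ru_{\mathsf{F}}$ has depth $N+k$ while the rules of $\Ru_{N+1}$ have depth up to $N+k+1$ (so $f_{N+k}$ genuinely changes them), Lemma~\ref{zip} transfers those hypothetical derivations to yield $f_{N+k}(\Ru_{N+1})\subseteq\,\vdash_N\,=\,\vdash_\omega$, which is the real contradiction. This also shows that the indexing in the statement is shifted by one relative to the paper's own proof (which uses $f_{n+k}(\Ru_{n+1})\not\subseteq\,\vdash_\omega$) and to its application in Theorem~\ref{th:dnnotfi} (which verifies $f_n(\Ru_n)\not\subseteq\,\vdash^\leq_{\DN}$ with $k=1$); taking the displayed hypothesis at face value is part of what led you to apply $f$ to the wrong set of rules.
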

\begin{proof}
Let $\vdash_n=\vdash_{\Ru_n\cup \Ru_{\mathsf{F}}}$ and $\vdash_\omega=\vdash_{\Ru_\omega\cup \Ru_{\mathsf{F}}}$.
As
 $\Ru_\omega=\bigcup_{n<\omega}\Ru_n$, %By Theorem[Chelakovsi] 
 it is enough to show that $\vdash_n\subsetneq \vdash_{n+1}$.
 It is clear that $\Ru_n\cup R_{\mathsf{F}}$ is $\neg$-balanced and with $\neg$-depth $n+k$. 
 Hence, by Lemma~\ref{zip}, $\Gamma\vdash_n\varphi$ iff $f_{n+k}(\Gamma)\vdash_n f_{n+k}(\varphi)$.
 Thus, from  $f_{n+k}(\Ru_{n+1})\not\subseteq\, \vdash_\omega$ and $\vdash_n\subseteq \vdash_\omega$ %then 
% {\ {red} (...)}
 %by Lemma~\ref{zip}  
 we conclude that 
% $\Ru_{n+1}\subseteq\, \vdash_{n+1}\setminus \vdash_n$ 
 $\Ru_{n+1}\subseteq\, \vdash_{n+1}$ 
 but
 $\Ru_{n+1}\not \subseteq\,  \vdash_n$,
as was required to prove.
\end{proof}

\begin{theorem}
\label{th:dnnotfi}
 The logic 
 %$\vdash_{R^{\mathsf{DN}}_{\mathsf{F}}}=\,\vdash^\leq_{\DN}$
 $\vdash^\leq_{\DN}$ of distributive lattices with negation
 is not finitely based. 
\end{theorem}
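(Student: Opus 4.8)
The plan is to apply the machinery already assembled — specifically Corollary~\ref{cor:compdn} together with Lemma~\ref{zip2} — to the rule set $\Ru := \Ru^{\DNE}$. First I would observe that $\Ru^{\DNE}$ is $\neg$-balanced and has finite $\neg$-depth: each defining equation of $\DN$ (the lattice axioms (L1)--(L6), together with $\neg\bot\approx\top$ and $\neg(x\lor y)\approx \neg x\land\neg y$) is a pair of formulas in which every variable occurs at $\neg$-depth $0$ on both sides, except possibly for the negation axiom, which is still balanced with $\neg$-depth $1$. So we may take $k=1$ (or whatever the actual maximum is). By Corollary~\ref{cor:compdn}, $\vdash^{\leq}_{\DN} = \vdash_{\Ru^{\DNE}_\omega \cup \Ru_{\mathsf{F}}}$, so it suffices to verify the hypothesis of Lemma~\ref{zip2}: that $f_{n+k}(\Ru^{\DNE}_n) \not\subseteq\, \vdash_{\Ru^{\DNE}_\omega \cup \Ru_{\mathsf{F}}}$ for every $n<\omega$.

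The heart of the argument is therefore to exhibit, for each $n$, a specific rule in $\Ru^{\DNE}_n$ whose image under $f_{n+k}$ fails in $\vdash^{\leq}_{\DN}$. A natural candidate is built from the negation axiom $\neg(x\lor y)\approx \neg x\land \neg y$ by applying the closure operation $n$ times. For instance, starting from $\Tfrac{\neg(p\lor q)}{\neg p\land \neg q}$ in $\Ru^{\DNE}_0$, the stage-$n$ version wraps this rule inside $n$ nested negations (via the $\frac{\neg\psi}{\neg\varphi}$ clause) and/or joins/meets with the fresh variables $q_0,\dots,q_{n-1}$. The key point: in such a stage-$n$ rule the distinguished $\neg$-subformulas sit at $\neg$-depth roughly $n$, so $f_{n+k}$ will replace the atomic leaves appearing at the bottom (depth $n+k$) by fresh variables, thereby \emph{decoupling} the two sides of the equation. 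Concretely, once $f_{n+k}$ severs the shared structure, the resulting rule $\frac{f_{n+k}(\varphi)}{f_{n+k}(\psi)}$ asserts an inequality $\varphi' \leq \psi'$ between formulas that no longer share the relevant variables, and one builds a small distributive lattice with negation (a chain or the four-element De Morgan-type lattice, with $\neg$ defined appropriately) together with a valuation refuting $\varphi' \leq \psi'$. Since this algebra lies in $\DN$, the rule is not sound for $\vdash^{\leq}_{\DN} = \vdash_{\Ru^{\DNE}_\omega\cup\Ru_{\mathsf{F}}}$, giving the required non-containment.

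I expect the main obstacle to be the bookkeeping in the previous paragraph: precisely identifying which rule in $\Ru^{\DNE}_n$ to use, computing the effect of $f_{n+k}$ on it (keeping careful track of $\neg$-depths of each leaf, since $f_{n+k}$ only touches $\neg$-headed subformulas at exactly depth $n+k$), and then pinning down a concrete finite algebra in $\DN$ with a valuation that witnesses the failure. One must also double-check that the $\neg$-depth parameter $k$ is chosen consistently so that $n+k$ lands precisely on the leaves one wants to cut, and that the rules of $\Ru_{\mathsf{F}}$ (the filter rules) do not accidentally make the mutilated rule derivable — but since $\Ru_{\mathsf{F}}$ only expresses that the designated set is a lattice filter, and the refuting algebra-with-filter is a genuine model of $\vdash^{\leq}_{\DN}$, this is automatic. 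Once the candidate rule and refuting model are in hand, the theorem follows immediately by invoking Lemma~\ref{zip2}.
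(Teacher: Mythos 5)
Your skeleton is exactly the paper's: invoke Corollary~\ref{cor:compdn} to identify $\vdash^\leq_{\DN}$ with $\vdash_{\Ru^{\DNE}_\omega\cup\Ru_{\mathsf F}}$, check that $\Ru^{\DNE}$ is $\neg$-balanced of depth $1$, and feed this into Lemma~\ref{zip2}. But the only step of this theorem that is not already contained in those two results is the exhibition, for each $n$, of a concrete rule whose image under the cutting map is unsound --- and that is precisely the step you defer to ``bookkeeping''. Moreover, your sketch of that step would not go through as written. You say that $f_{n+k}$ ``will replace the atomic leaves appearing at the bottom (depth $n+k$) by fresh variables'', but $f_m$ fixes atomic formulas ($f_m(p)=p$); it only replaces \emph{$\neg$-headed} subformulas sitting at $\neg$-depth exactly $m$. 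Consequently, for your candidate --- the axiom $\neg(x\lor y)\approx\neg x\land\neg y$ wrapped in $n$ negations, which (up to orientation) reads $\frac{\neg^{n+1}(p\lor q)}{\neg^{n}(\neg p\land\neg q)}$ --- the map $f_{n+1}$ finds no $\neg$-headed subformula at depth $n+1$ (only the variables $p,q$ sit there) and therefore fixes the rule verbatim. No decoupling occurs, the image is still sound, and no contradiction with Lemma~\ref{zip2} is obtained.

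What makes the paper's witness work is that the cut is placed where the two sides carry syntactically \emph{distinct} $\neg$-headed subformulas: starting from idempotency $\Tfrac{p\land p}{p}$ and closing under the $\frac{\neg\psi}{\neg\varphi}$ clause one gets $\mathsf{r}=\frac{\neg^{n+1}(p\land p)}{\neg^{n+1}p}$, and $f_n(\mathsf{r})=\frac{\neg^{n}q_{\neg(p\land p)}}{\neg^{n}q_{\neg p}}$, whose premiss and conclusion now involve two unrelated fresh variables; this already fails in the two-element Boolean algebra, which lies in $\DN$. Your De Morgan candidate can be salvaged by applying $f_n$ rather than $f_{n+1}$ (giving $\frac{\neg^{n}q_{\neg(p\lor q)}}{\neg^{n}(q_{\neg p}\land q_{\neg q})}$, likewise unsound), but until the witness rule, the index of the cut, and a refuting algebra-with-valuation are pinned down, the argument is incomplete at its essential point.
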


\begin{proof}
Recall that
$\vdash_{\Ru^{\mathsf{DN}}_\omega\cup \Ru_{\mathsf{F}}}=\,\vdash^\leq_{\DN}$
by Corollary~\ref{cor:compdn}.
%
%{\ {blue} %(IS THIS FROM A PREVIOUS VERSION?)
%Let %$\vdash_\omega={\Ru_\omega^{\mathsf{DN}}}$ and
% $\vdash_n=\vdash_{\Ru_n^{\mathsf{DN}}\cup \Ru_{\mathsf{F}}}$.
%%Clearly $\vdash_\omega=\bigcup_{n<\omega}\vdash_n$
%% By Theorem[Chelakovsi] it is enough to show that $\vdash_n\subsetneq \vdash_{n+1}$.
% }
 Then, the result follows directly from Lemma~\ref{zip2}. 
 Indeed, $\Ru^{\mathsf{DN}}$ is $\neg$-balanced and has $\neg$-depth $1$. Moreover, for every
 $n < \omega$, we have
 $\mathsf{r}=\frac{\nnot\nolimits^{n+1}(p\land p))}{\nnot\nolimits^{n+1}p}\in \Ru_n^{\mathsf{DN}}$ and
 $$f_n(\mathsf{r})=\frac{ f_n (\nnot\nolimits^{n+1}(p\land p))}{f_n (\nnot\nolimits^{n+1}p)})=\frac{\neg^n(q_{\neg(p\land p)})}{\neg^n(q_{\neg p})}\notin\,\vdash^\leq_{\DN}.$$
 \end{proof}

Note that the result of Theorem~\ref{th:dnnotfi} holds for
every strengthening of  $\vdash^\leq_{\DN}$ to which Lemma~\ref{zip2} applies.
In particular, let $\Eq$ be a set of equations such that
$\Ru^{\Eq}$ is $\nnot$-balanced and has finite $\neg$-depth.
Then, for Lemma~\ref{zip2} to apply, 
it suffices to have $\BA \subseteq \VV_{\Eq} \subseteq \DN$.
For instance,
denoting by $\vdash^\leq_{\mathbb{O}}$
the logic of order of the variety of Ockham algebras (Definition~\ref{def:sdmock}),
it suffices to check that the rule
$
\Tfrac{\nnot (p\, \land\, q)}{\nnot p \lor \nnot q}
$ is
$\nnot$-balanced
to conclude that $\vdash^\leq_{\mathbb{O}}$ is not finitely based. 
A similar argument shows that,
letting $\K \subseteq \DN$ be the variety of distributive lattices with negation
axiomatized (relatively to $\DN$) by equations
(SDM\ref{Itm:SD1})  and (SDM\ref{Itm:SD2}) from Definition~\ref{def:sdmock},
we have that $\vdash^\leq_{\K}$ is not finitely based.

% {\ {red} (Say something else: because the other De Morgan rule is $\nnot$-balanced, it is subclassical... hence the rule below is not sound.}

% 
% {\ {red} UM: I don't understand (anymore) the reference to subclassical logics...we'll need to rewrite the comments below:
%Note, that the above theorem also holds when considering $\vdash$ 
% obtained by strengthening $\vdash^\leq_{V_{\mathsf{DN}}}$   with
%   some set $\Ru$ of $\neg$-balanced rules whenever $f_n(\Ru_{n+1})\not \subseteq\, \vdash$.
%   In particular if subclassical then surelly $\neg^n p\not\vdash \neg^n q$ for $p\neq q$.
% Or logics of order of varieties (having Boolean as sub!) given by balanced equations: If $\mathsf{DN}\subseteq\Eq$ and
% $\Ru^\Eq$ is $\neg$-balanced then $\vdash^\leq_{V_{\Eq}}$ is not finitely based!
% For example OCHAM!!!!
 
% In the next two sections we present two positive cases...strengthening of $\vdash^\leq_{V_{\mathsf{DN}}}$ with some non $\neg$-balanced rules that are finitely based!
%  }

\section{The logics of semi-De Morgan algebras and  $p$-lattices}
\label{sec:sdm}

In this Section we show that, unlike $\vdash^\leq_{\DN}$ and $\vdash^\leq_{\mathbb{O}}$, the logic of order 
of semi-De Morgan algebras  $ \vdash^{\leq}_{\SDM} $ is finitely based. In fact, we are going to establish a more general result:
every logic of order extending semi-De Morgan logic is finitely based, provided
the corresponding variety is (Theorem~\ref{th:subva}).

%Finite axiomatization if we have 
%Let:
%$\Ru_{+}=\frac{\neg(\neg p_1 \land p_3)\,,\,\neg(\neg (p_2\land q) \land p_3)}{\neg(\neg (p_1\land q) \land p_3)}$ 
%(sound for intuitionistic, also for semi de morgan?)

%{\ {red} bring back the patterns! $g_n$ }

%{\ {red} Maybe the one below does not need to be a forma definition (consistency with what we did previously)}

%\begin{definition}
Let $\Ru \subseteq Fm \times Fm$ be a set of rules, and let
 $(\{q\}\cup\{q_i:i<\omega\})\cap \mathsf{var}(\Ru)=\emptyset$.
%
%$\Ru^{\frac{A}{B}}_\lor=\frac{A\lor q}{B\lor q}$, 
%$\Ru^{\frac{A}{B}}=\frac{A\land q}{B\land q}$ and 
%$\Ru^{\frac{A}{B}}=\frac{\nnot A}{\nnot B}$.
%{\ {red} use instead $\phi, \psi$ etc. for formulas? and the set of rules can be $\Ru$?}
Given a formula $\gamma$, let
%$g_n(\gamma)=\neg(\ldots(\neg(\gamma\land q_1)\land\ldots) \land q_n)$ inductively:
%
$g_0(\gamma)=\gamma\land q_0$ and %$g_1(\gamma)=\neg(\gamma\land q_0)\land q_1$, 
$g_{n+1}(\gamma)=\neg g_n (\gamma)\land q_n$.
Given a rule  $\frac{\varphi}{\psi}$ and $n<\omega$, let
$$
\mathsf{r}^{\frac{\varphi}{\psi}}_n=
\begin{cases}
 \frac{g_n(\varphi)\lor q}{g_n(\psi)\lor q} \mbox{ if }n=2k\\[.3cm]
 \frac{g_n(\psi)\lor q}{g_n(\varphi)\lor q}\mbox{ if }n=2k+1
\end{cases}
$$
%{\ {red} Alternative: how about $\ru_n(\phi,\psi)$?}{\ {blue} but looks ugly when used no?}
%
For $n<\omega$, let
$\Ru^g_n=\{\mathsf{r}^{\frac{\varphi}{\psi}}_n:\frac{\varphi}{\psi}\in \Ru\}$,
 $\Ru^g_{\leq n}=\Ru\cup \bigcup_{k\leq n}\Ru^g_k$ 
 and $\Ru^g_\omega=\bigcup_{n<\omega}\Ru^g_{\leq n}$.
 % $\Ru^g_{n}=\Ru\cup\{\mathsf{r}^{\frac{\varphi}{\psi}}_k:k\leq n, \frac{\varphi}{\psi}\in \Ru\}$ and $\Ru^g_\omega=\bigcup_{n<\omega}\Ru^g_n$.
%R\cup\{r^{\frac{\varphi}{\psi}}_k: k< \omega, \frac{\varphi}{\psi}\in R{\mathsf{Eq}\}$.
%\end{definition}

\begin{example}
\label{ex:patterng}
Given a rule 
$\frac{\varphi}{\psi}$, 
we have
  $$
 \frac{(\varphi\land q_0)\lor q}{(\psi\land q_0)\lor q} \,\,{\mathsf{r}^{\frac{\varphi}{\psi}}_0} \qquad
 \frac{(\neg(\psi\land q_0)\land q_1)\lor q}{(\neg (\varphi\land q_0)\land q_1)\lor q}\,\, {\mathsf{r}^{\frac{\varphi}{\psi}}_1} \qquad 
 \frac{(\neg(\neg(\varphi\land q_0)\land q_1)\land q_2)\lor q}{(\neg(\neg(\psi\land q_0)\land q_1)\land q_2)\lor q}\,\, {\mathsf{r}^{\frac{\varphi}{\psi}}_2}$$
The general pattern is:
 $$\frac{(\neg\ldots(\neg(\gamma^{\mathsf{up}_n}\land q_0)\land q_1)\ldots\land q_n)\lor q}{(\neg\ldots(\neg(\gamma^{\mathsf{dn}_n}\land q_0)\land q_1)\ldots\land q_n)\lor q}\,\, {\mathsf{r}^{\frac{\varphi}{\psi}}_n} $$ with
 
 $$\gamma^{\mathsf{up}_n}=
\begin{cases}
 \varphi &\text{for even }n \\
  \psi &\text{for odd }n\\
\end{cases} \text{ and }
\gamma^{\mathsf{dn}_n}=
\begin{cases}
 \psi &\text{for even }n \\
  \varphi &\text{for odd }n.
\end{cases} 
$$
\end{example}
Clearly, $\Ru_\omega^g\subseteq \Ru_\omega$,
 where $\Ru_\omega$ is defined as in the previous Section.
% We are now going to look at conditions for 
%
%If certain rules hold then $\Ru_\omega^n$ is enough...
Let us fix the set $\Ru_{\bullet}$  consisting of the following rules:

% $$R_{\mathsf{nice}}=
%\{\frac{%\text {red}{p}
%}{\top} {r_\top},
%\frac{\bot}{p},
$$
\Dfrac{p}{p \lor\bot} {\mathsf{r}^\lor_\bot} \qquad %$ 
\Dfrac{p\lor r}{(p\land\top)\lor r} {\mathsf{r}^\land_\top} \qquad
 \Dfrac{\neg (p\land\top)\lor r}{\neg p\lor r}  {\mathsf{r}^{\neg}_\top}
 $$
 
$$
\Dfrac{(p\lor q)\land r}{(p\land r)\lor (p\land r) }\,\mathsf{r}_\mathsf{dist}^{\lor\land} \qquad
\Dfrac{((p_1\land p_2)\land p_3) \lor q}{(p_1\land (p_2\land p_3)) \lor q}\,\mathsf{r}_{\mathsf{ass}_\land^\lor} \qquad
\Dfrac{\neg(p\lor q)}{\neg p \land \neg q}\,\mathsf{r}_{\mathsf{dm}^{\neg}_\lor}
$$

\bigskip

{ %\ {gray}
\begin{lemma}\label{prop:cong}

If $\Ru_\bullet \subseteq\, \vdash_{\Ru_\omega}$ then
$\vdash_{\Ru^g_\omega\cup \Ru_\bullet}{=}\,\vdash_{\Ru_\omega}$.
\end{lemma}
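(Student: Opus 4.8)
The inclusion $\vdash_{\Ru^g_\omega\cup\Ru_\bullet}\,\subseteq\,\vdash_{\Ru_\omega}$ I would settle first, and it is immediate: we have already observed that $\Ru^g_\omega\subseteq\Ru_\omega$, while the hypothesis gives $\Ru_\bullet\subseteq\,\vdash_{\Ru_\omega}$; hence every rule generating $\vdash_{\Ru^g_\omega\cup\Ru_\bullet}$ is derivable in $\vdash_{\Ru_\omega}$, and therefore so is the whole consequence relation.

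For the converse, write $\vdash'$ for $\vdash_{\Ru^g_\omega\cup\Ru_\bullet}$; since $\Ru_\omega=\bigcup_{n<\omega}\Ru_n$ it suffices to prove $\Ru_n\subseteq\,\vdash'$ for every $n$, which I would do by induction on $n$. The base case is free, since $\Ru_0=\Ru\subseteq\Ru^g_\omega\subseteq\,\vdash'$. Every rule of $\Ru_{n+1}$ arises from some $\frac{\varphi}{\psi}\in\Ru_n$ by one of the operations $\frac{\varphi}{\psi}\mapsto\frac{\varphi\lor q_n}{\psi\lor q_n}$, $\frac{\varphi}{\psi}\mapsto\frac{\varphi\land q_n}{\psi\land q_n}$, $\frac{\varphi}{\psi}\mapsto\frac{\neg\psi}{\neg\varphi}$, so the inductive step reduces to the following \emph{context-closure} property of $\vdash'$ (this is also the transformation-of-derivations argument that the proofs of Lemmas~\ref{cong} and~\ref{zip} refer back to): if $\alpha\vdash'\beta$, then $\alpha\lor\gamma\vdash'\beta\lor\gamma$, $\alpha\land\gamma\vdash'\beta\land\gamma$ and $\neg\beta\vdash'\neg\alpha$, for every formula $\gamma$.

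To establish context-closure I would fix a $\vdash'$-derivation $\alpha=\chi_0,\ldots,\chi_m=\beta$ (a linear chain, since all rules in play are formula-to-formula) and rewrite it step by step. A step $\chi_i\to\chi_{i+1}$ is an instance $\frac{\delta^\sigma}{\varepsilon^\sigma}$ of some rule $\frac{\delta}{\varepsilon}$ of $\Ru^g_\omega\cup\Ru_\bullet$, and it turns into a $\vdash'$-derivation of $\chi_{i+1}\lor\gamma$ from $\chi_i\lor\gamma$ — and likewise of $\chi_{i+1}\land\gamma$ from $\chi_i\land\gamma$, and of $\neg\chi_i$ from $\neg\chi_{i+1}$ (reading the chain backwards) — as soon as the three rules $\frac{\delta\lor q'}{\varepsilon\lor q'}$, $\frac{\delta\land q'}{\varepsilon\land q'}$, $\frac{\neg\varepsilon}{\neg\delta}$ (with $q'$ fresh) are themselves $\vdash'$-derivable, the needed instances being recovered by extending $\sigma$. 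For the finitely many rules of $\Ru_\bullet$ this is a bounded verification using $\Ru_\bullet$ itself (distributivity, associativity, the De Morgan law $\mathsf{r}_{\mathsf{dm}^{\neg}_\lor}$, and the $\bot,\top$-collapses). For a rule $\mathsf{r}^{\frac{\varphi}{\psi}}_n$ of $\Ru^g_\omega$ one exploits the recursive shape of $g_n$: the $\lor$-extension is absorbed into the trailing disjunct $q$ of $\mathsf{r}^{\frac{\varphi}{\psi}}_n$ (reassociating and commuting as needed), the $\land$-extension is handled by pushing the new meet through the outermost $\land q_n$ of $g_n$ via $\mathsf{r}_{\mathsf{ass}_\land^\lor}$ and $\mathsf{r}_\mathsf{dist}^{\lor\land}$ and reusing $\mathsf{r}^{\frac{\varphi}{\psi}}_n$ under a modified substitution, and the $\neg$-extension is rewritten — via $\mathsf{r}_{\mathsf{dm}^{\neg}_\lor}$ — into an instance of $\mathsf{r}^{\frac{\varphi}{\psi}}_{n+1}$ in which the auxiliary variables $q_0,\ldots,q_{n+1},q$ are suitably instantiated (some to $\top$ or $\bot$, the freshly created one to $\neg\gamma$). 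The alternation of parities in the definition of $\mathsf{r}^{\frac{\varphi}{\psi}}_n$ is exactly what makes the direction of the extended rule agree with the direction flip caused by the extra negation.

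With context-closure in hand the induction closes: from $\frac{\varphi}{\psi}\in\Ru_n\subseteq\,\vdash'$ we obtain $\frac{\varphi\lor q_n}{\psi\lor q_n}$, $\frac{\varphi\land q_n}{\psi\land q_n}$, $\frac{\neg\psi}{\neg\varphi}\in\,\vdash'$, i.e.\ $\Ru_{n+1}\subseteq\,\vdash'$; hence $\Ru_\omega\subseteq\,\vdash'$ and the two logics coincide. The step I expect to be the main obstacle is precisely the last verification for the rules $\mathsf{r}^{\frac{\varphi}{\psi}}_n$, and among its three parts the $\neg$-extension: one has to check that inserting an extra negation above the rigid ``alternating meets and negations'' block encoded by $g_n$ (sitting under the trailing disjunct) can be re-expressed, using only the six rules of $\Ru_\bullet$, as the one-step-longer block $g_{n+1}$ with its auxiliary variables reinstantiated — this is the bookkeeping for which the parity convention and the precise form of the rules of $\Ru_\bullet$ (each carrying a spare disjunct $\lor r$) were tailored.
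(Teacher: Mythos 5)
Your proposal follows essentially the same route as the paper: both reduce the nontrivial inclusion to closure of $\vdash_{\Ru^g_\omega\cup\Ru_{\bullet}}$ under the three operations that generate $\Ru_{n+1}$ from $\Ru_n$, established by transforming derivations step by step, and your descriptions of the three verifications for $\mathsf{r}^{\frac{\varphi}{\psi}}_n$ (reassociating into the trailing disjunct for $\lor$, pushing the new conjunct through with distributivity and associativity for $\land$, and converting the $\neg$-extension via $\mathsf{r}_{\mathsf{dm}^{\neg}_\lor}$ into an instance of $\mathsf{r}^{\frac{\varphi}{\psi}}_{n+1}$ with the parity flip) are exactly the derivations the paper writes out. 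The only differences are ones of explicitness: you leave those computations as described strategies rather than displayed derivations, and you additionally flag the need to handle the $\Ru_{\bullet}$-steps of a derivation in the context-closure induction, a point the paper passes over silently.
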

\begin{proof}
Let $\vdash{=}\vdash_{\Ru^g_\omega}$.
Since $\Ru\subseteq \Ru^g_\omega \,\subseteq \, \vdash_{\Ru_\omega}$, %by Lemma~\ref{cong} 
it is enough to show that  if
$\varphi \vdash\psi$, given a  fresh variable $q$, we have:
 
%\begin{itemize}
 %\item
 (i) $\varphi\lor q\vdash  \psi\lor q$ \qquad
  %\item
  (ii) $\varphi\land q\vdash  \psi\land q$ \qquad 
 %\item
 (iii) $\neg \psi\vdash  \neg \varphi$.
%\end{itemize}

The proof is by induction on the length of the %$\Ru^g_\omega$-
derivation %$\varphi, C_1, \ldots, C_k, B$ 
showing that 
$\varphi\vdash  \psi$.
In the base case we have simply  $\varphi=\psi$, in which case (i), (ii) and (iii) follow immediately.
For the step, assume  $\varphi, \gamma_1, \ldots, \gamma_k, \psi$ is an $\Ru^g_\omega$-derivation and
 by induction hypothesis we have that 
 $\varphi\lor q \vdash \gamma_k\lor q$, 
%$\varphi\lor q, C_1\lor q, \ldots, C_k\lor q$, 
$\varphi\land q\vdash \gamma_k\land q$ and
%$\varphi\land q, C_1\land q, \ldots, C_k\land q$,
$\neg \gamma_k \vdash \neg \varphi$. %, \neg \varphi$ 
 To conclude the proof, we  consider in each of the cases how to complete the derivations depending on the last rule that was used.
By structurality, it is enough to show that for each rule $\frac{\varphi}{\psi}\in %R^*=
\Ru^g_\omega$ we have that (i)--(iii) hold.

% (i) $\varphi\lor q\vdash \psi\lor q$ \qquad (ii) $\varphi\land q\vdash \psi\land q$ \qquad (iii) $\neg \psi \vdash_{R} \neg \varphi$.

Concerning the rules $\frac{\varphi}{\psi}\in \Ru$, we have:
 
\begin{itemize}
 \item[(i)] $\varphi\lor q\vdash_{\mathsf{r}^\land_\top} (\varphi\land \top) \lor q \vdash_{\mathsf{r}^{\frac{\varphi}{\psi}}_0} (\psi\land \top) \lor q  \vdash_{\mathsf{r}^\land_\top} \psi\lor q$
 
%  {\ {red} still here useful $\Tfrac{p\lor r}{(p\land \top)\lor r}$ check how to get it with minimum disturbance...}

  \item[(ii)] $\varphi\land q\vdash_{\mathsf{r}^\lor_\bot} (\varphi\land q) \lor \bot \vdash_{\mathsf{r}^{\frac{\varphi}{\psi}}_0} (\psi\land q) \lor \bot  \vdash_{\mathsf{r}^\lor_\bot} \psi\land q$
 \item[(iii)] $\neg \psi \vdash_{\mathsf{r}^{\neg}_\top} \neg (\psi\land \top) \lor \bot \vdash_{{\mathsf{r}^{\frac{\varphi}{\psi}}_1}} \neg (\varphi\land \top) \lor \bot 
 \vdash_{\mathsf{r}^{\neg}_\top} \neg \varphi \lor \bot \vdash_{\mathsf{r}^{\lor}_\bot} \neg \varphi$.
\end{itemize}

 Now, for each $\frac{\varphi}{\psi}\in \Ru$ and  $j<\omega$,  consider  ${\ru^{\frac{\varphi}{\psi}}_j}=\frac{g_j(\varphi)\lor q'}{g_j(\psi)\lor q'}$. We have:

%\begin{itemize}
 
% \item[j=1] ~\\ 

\begin{itemize}
 \item[(i)]  $(g_j(\varphi)\lor q')\lor q \vdash_{\mathsf{r}_\mathsf{ass}^\lor} g_j(\varphi)\lor (q' \lor q) \vdash_{\mathsf{r}^{\frac{\varphi}{\psi}}_{j}} g_j(\psi)\lor (q' \lor q)  \vdash_{\mathsf{r}_\mathsf{ass}^\lor}  (g_j(\psi)\lor q')\lor q $\\
 
  \item[(ii)]  For $j>0$ (the case $j=0$ is analogous) %~\\[-.6cm] 
\begin{align*}
  (g_j(\varphi)\lor q')\land q&= ((\neg g_{j-1}(\varphi)\land q_j)\lor q')\land q  \\
  &\vdash_{\mathsf{r}_\mathsf{dist}^{\lor\land}} 
    ((\neg g_{j-1}(\varphi)\land q_j)\land q) \lor (q'\land q) \\
    &\vdash_{\mathsf{r}_{\mathsf{ass}_\land^\lor}} 
    ((\neg g_{j-1}(\varphi)\land (q_j\land q)) \lor (q'\land q) \\
    &\vdash_{\mathsf{r}^{\frac{\varphi}{\psi}}_{j}}
    ((\neg g_{j-1}(\psi)\land (q_j\land q)) \lor (q'\land q)\\
    & \vdash_{\mathsf{r}_{\mathsf{ass}_\land^\lor}} ((\neg g_{j-1}(\psi)\land q_j)\land q) \lor (q'\land q)\\
        & \vdash_{\mathsf{r}_\mathsf{dist}^{\lor\land}} (g_j(\psi)\lor q')\land q
\end{align*}
 
% $\\
  
 \item[(iii)] \qquad $\neg (g_j(\psi)\lor q) \vdash_{\mathsf{r}_{\mathsf{dm}^{\neg}_\lor}}  \neg g_j(\psi)\land \neg q$%~\\[-.6cm] %for this one let's use the layered shapes...
% \vspace{-.6cm}
\begin{align*}
 % \neg (g_j(\psi)\lor q) & \vdash_{\mathsf{r}_{\mathsf{dm}^{\neg}_\lor}} 
% \neg g_j(\psi)\land \neg q\\
 &\vdash_{\mathsf{r}^\lor_\bot} 
 (\neg g_j(\psi)\land \neg q)\lor \bot \\
& \vdash_{{\mathsf{r}^{\frac{\varphi}{\psi}}_{j+1}}} 
   (\neg g_j(\varphi)\land \neg q)\lor \bot\\
 &   \vdash_{r^\lor_\bot} 
  \neg (g_j(\varphi)\lor q)\\
   &   \vdash_{\mathsf{r}_{\mathsf{dm}^{\neg}_\lor}} 
  \neg (g_j(\varphi)\lor q)\hspace{3.6cm}
\end{align*}

\end{itemize}
\end{proof}
}

%{\ {red}
Since $\Ru_{\bullet} \subseteq \, \vdash^\leq_{\DN}$,
by
Lemma~\ref{prop:cong}, we have that 
$(\Ru^{\mathsf{DN}})^g_\omega \cup \Ru_{\bullet} \cup \Ru_{\mathsf{F}}$
provides an alternative (infinite) Hilbert presentation  of $\vdash^\leq_{\DN}$.
%(see Example~\ref{ex:patterng})

%(Do we want to show the general pattern?)
%As $\Ru_{\mathsf{nice}}$ are $DN$-sound 
%it follows as a corollary an explicit presentation - we can show easily the shape of general pattern - of an infinite axiomatization of the DN-logic %, put this in the previous Section??? 
% }

In order to obtain a finite axiomatization of $\vdash^\leq_{\SDM}$, let us fix
the set $\Su_{\bullet}$ consisting of  the following rules:

%{\ {blue} put in the other environment!}

\begin{table}[!h] %[tab:rulebel]{A %complete %Hilbert 
%calculus for the Belnap-Dunn logic}
  \begin{tabular}{ccc}
  \\

%       \doubleLine
    \AxiomC{$\neg\neg(p\land q)\lor r$}
    \RightLabel{\,$\ru_\land$
    }
    \UnaryInfC{$\neg\neg p\lor r$}
    \DisplayProof &
       \doubleLine
        \AxiomC{$\neg(\neg\neg p\land q)$}
    \RightLabel{\, $\ru_{\nnot}$ %(WP)
    }
    \UnaryInfC{$\neg(p\land q)$}
        \DisplayProof &
      %  \doubleLine
    \AxiomC{$\nnot (\nnot p_1 \land p_2)$}
        \AxiomC{$\nnot (\nnot (p_3 \land p_4) \land p_2) $}
    \RightLabel{\, $\ru^{\nnot}_{\land}$ %(Q)
    }
      \BinaryInfC{$\nnot (\nnot (p_1 \land p_4) \land p_2)$}
    \DisplayProof \\ \\ %\vspace{-0.2cm}
    
  \end{tabular}   
%\caption{The set of rules $\Ru_\top$. %Further rules. %{\ {red}change labels of rules} } 
%\label{tab:sd2}
\end{table}

%$$  \frac{\neg\neg(p\land q)\lor r}{\neg\neg p\lor r} \, \ru_\land
%\qquad 
%\Tfrac{\neg(\neg\neg p\land q)}{\neg(p\land q)}\, \ru_{\neg} %\frac{\neg(p\land q)}{\neg(\neg\neg p\land q)} 
%\qquad\frac{
%\nnot (\nnot p_1 \land p_2) \,,\,
%\nnot (\nnot (p_3 \land p_4) \land p_2)
% }{\nnot (\nnot (p_1 \land p_4) \land p_2)}\, \ru^{\nnot}_{\land}
%$$

%Let:
%$\Ru_{+}=\frac{\neg(\neg p_1 \land p_3)\,,\,\neg(\neg (p_2\land q) \land p_3)}{\neg(\neg (p_1\land q) \land p_3)}$ 

%{\ {blue} Rewrite proof of Prop below following the example of Prop 4.3 above:}

\begin{proposition}\label{prop++}
%{\ {red} (Trying to rewrite the statement -- I wonder if we can avoid naming $\Ru_{+}$):{\ {blue} the key point is to eliminate the $g$ from the supscript so we can instantiate it with $\Eq$ in the theorem...}

Let $\Ru $ be a set of rules,  and let $\Ru_{+}:=\Ru^g_{\leq 2}\cup \Su_\bullet$.
If $\Su_\bullet\subseteq\, \vdash_{\Ru^g_\omega}$,
then $\vdash_{\Ru_{+}}{=}\,\vdash_{\Ru^g_\omega}$.
%and let $\Ru_{+}=\Ru^g_2\cup \Su_\bullet$.
%Then
% Then the set 
%$\Ru^g_2 \cup \Su_{\bullet}$ axiomatizes $\vdash_{\Ru^g_\omega}$.
%%}
%
% Let $\Ru_{+}=\Ru^g_2\cup \Su_\bullet$. 
%For $\Su_\bullet\subseteq\, \vdash_{\Ru_\omega}$,
%$\vdash_{\Ru_{+}}{=}\,\vdash_{\Ru^g_\omega}$.
\end{proposition}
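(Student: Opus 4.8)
The plan is to show two inclusions, the nontrivial one being $\vdash_{\Ru^g_\omega} \subseteq\, \vdash_{\Ru_{+}}$, i.e.~that from the finitely many ``low-level'' rules in $\Ru^g_{\leq 2}$ together with $\Su_\bullet$ one can derive all the rules $\mathsf{r}^{\frac{\varphi}{\psi}}_n$ for every $n<\omega$. The other inclusion $\vdash_{\Ru_{+}} \subseteq\, \vdash_{\Ru^g_\omega}$ is immediate: $\Ru^g_{\leq 2}\subseteq\Ru^g_\omega$ by definition, and $\Su_\bullet\subseteq\,\vdash_{\Ru^g_\omega}$ is precisely the hypothesis of the proposition. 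So the whole content is a ``collapse'' argument: the tower of rule sets $\Ru^g_{\leq n}$, which strictly grows in $\vdash^\leq_{\DN}$ and weaker settings, stabilizes once the algebraic identities of $\Su_\bullet$ (which encode the semi-De Morgan equations (SDM1)--(SDM3), cf.~Lemma~\ref{lem:forleibn}) become available.

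First I would set $\vdash\, :=\, \vdash_{\Ru_{+}}$ and prove, by induction on $n$, that $\mathsf{r}^{\frac{\varphi}{\psi}}_n \in\, \vdash$ for every rule $\frac{\varphi}{\psi}\in\Ru$ and every $n\leq 2$ trivially (these are in $\Ru^g_{\leq 2}$), and then for $n\geq 3$ by reducing $\mathsf{r}^{\frac{\varphi}{\psi}}_n$ to $\mathsf{r}^{\frac{\varphi}{\psi}}_{n-2}$. The key is the shape of $g_n$: we have $g_n(\gamma) = \neg g_{n-1}(\gamma)\land q_{n-1} = \neg(\neg g_{n-2}(\gamma)\land q_{n-2})\land q_{n-1}$, so a subformula of the form $\neg(\neg g_{n-2}(\gamma)\land q_{n-2})$ sits at $\neg$-depth one inside $g_n(\gamma)$. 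The rule $\ru_{\nnot}$ (which says $\neg(\neg\neg p\land q)\mathrel{\sineq}\neg(p\land q)$, i.e.~(SDM3)-flavoured) and the rule $\ru^{\nnot}_{\land}$ (the ternary rule of Lemma~\ref{lem:forleibn}(ii), which lets one replace $p_1$ by $p_3\land p_4$ under the pattern $\neg(\neg(-)\land p_2)$) together allow one to ``strip two negations'': using the congruence-style closure machinery already established in Lemma~\ref{prop:cong} (applied with $\Ru^g_\omega$ replaced by the current finite base, which is legitimate because $\Ru_\bullet\subseteq\Su_\bullet$-derivable consequences — one should check $\Ru_\bullet\subseteq\,\vdash_{\Ru_{+}}$, which holds since $\Ru_\bullet\subseteq\,\vdash^\leq_{\DN}$ and these are lattice identities expressible from $\Su_\bullet$ and $\Ru^g_{\leq 2}$), one propagates the rewriting $g_n(\gamma)\lor q \mathrel{\sineq} (\text{something involving } g_{n-2}(\gamma))\lor q'$ through the outer context. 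Concretely: inside $g_n(\varphi)$ the subterm $\neg g_{n-2}(\varphi)\land q_{n-2}$ can, by $\ru^{\nnot}_{\land}$ and $\ru_\land$ (which is $\neg\neg(p\land q)\lor r \vdash \neg\neg p\lor r$, encoding (SDM2)), be matched against the already-derived rule $\mathsf{r}^{\frac{\varphi}{\psi}}_{n-2}$ to rewrite $\varphi$ into $\psi$, after which $\ru_{\nnot}$ restores the double negation. The parity bookkeeping ($n=2k$ vs $n=2k+1$ swapping the direction of the rule) is automatic because stripping two negations preserves parity, which is exactly why the induction step goes from $n$ to $n-2$ and not $n$ to $n-1$.

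The main obstacle I anticipate is the purely syntactic manipulation of getting the ternary rule $\ru^{\nnot}_{\land}$ to apply in the right context: $\ru^{\nnot}_{\land}$ has the rigid shape $\nnot(\nnot p_1\land p_2),\ \nnot(\nnot(p_3\land p_4)\land p_2) \vdash \nnot(\nnot(p_1\land p_4)\land p_2)$, so to use it for rewriting inside $g_n$ one must first massage $g_{n-1}(\gamma) = \neg g_{n-2}(\gamma)\land q_{n-1}$ into the form $\neg(\text{conjunction})\land q_{n-1}$ where the conjunction's two halves are ``$\varphi$-part'' and ``rest'', using associativity/commutativity of $\land$ (available from $\Ru^g_{\leq 2}$ and $\mathsf{r}_{\mathsf{ass}_\land^\lor}$-type rules, which I'd need to confirm are among the finite base or derivable). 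One also needs the congruence properties (i)--(iii) of Lemma~\ref{prop:cong} to hold for $\vdash_{\Ru_{+}}$ so that a rewrite valid ``at the top'' can be pushed under $\neg$, $\land q$, and $\lor q$; verifying the hypothesis $\Ru_\bullet\subseteq\,\vdash_{\Ru_{+}}$ of that lemma is the bookkeeping prerequisite. Once the congruence closure and the two-negation-stripping lemma are in place, the induction is routine: $\Ru^g_\omega$-derivations are transformed rule-by-rule, each application of some $\mathsf{r}^{\frac{\varphi}{\psi}}_n$ with $n\geq 3$ being replaced by a fixed-length $\vdash_{\Ru_{+}}$-derivation chunk obtained from the induction hypothesis for $n-2$, exactly in the style of the proofs of Lemmas~\ref{prop:cong} and~\ref{zip}. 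I would then conclude that $\vdash_{\Ru^g_\omega}\subseteq\,\vdash_{\Ru_{+}}$, and combined with the trivial inclusion this gives $\vdash_{\Ru_{+}} =\, \vdash_{\Ru^g_\omega}$.
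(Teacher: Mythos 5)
Your plan is correct and follows essentially the same route as the paper: an induction that descends by two in the index, using $\ru_{\nnot}$ and $\ru^{\nnot}_{\land}$ (together with $\ru_\land$) to strip the two outermost negations of $g_n$, apply the already-derived rule at level $n-2$, and then restore them via $\ru^{\nnot}_{\land}$, with the parity of the rule's direction preserved automatically. The only difference is that the paper writes the two derivations out directly at the top level (the $\lor r$ context is built into the rules), so the detour through Lemma~\ref{prop:cong} and the verification that $\Ru_\bullet\subseteq\,\vdash_{\Ru_{+}}$ that you anticipate are not actually needed.
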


\begin{proof}
%We know that $R^*$ axiomatizes intuitionistic logic...
We just need to show that 
$\mathsf{r}^{\frac{\varphi}{\psi}}_n \in\, \vdash_{\Ru_{+}}$ for $n>2$.
Since $\mathsf{r}^{\frac{\varphi}{\psi}}_1,\mathsf{r}^{\frac{\varphi}{\psi}}_2\in \Ru_{+}$, it suffices to show %for arbitrary $n$ 
that 
we can derive 
$\Ru^{\frac{\varphi}{\psi}}_{2n+3},\Ru^{\frac{\varphi}{\psi}}_{2n+4}$ using the rules
in $\Ru_{n+}:=\Ru_{+}\cup\{\mathsf{r}^{\frac{\varphi}{\psi}}_{2n+1},\mathsf{r}^{\frac{\varphi}{\psi}}_{2n+2}\}$.

That is, we need to show that, for every $\frac{\varphi}{\psi}\in \Ru$ and $n<\omega$, 
\begin{enumerate}[(i)]
\item %[{\bf(a)} ]
$g_{2n+3}(\psi)\lor r \vdash_{\Ru_{+n}} g_{2n+3}(\varphi) \lor r$.

%\begin{itemize}
% \item[(a)]
We have: 
\begin{align*}
 \gamma_0&=g_{2n+3}(\psi)\lor r\\
%g_{2n+3}(\psi)\lor r 
&= \neg(g_{2n+2}(\psi)\land q_{2n+3})\lor r \\
 &= 
  \neg( \neg(g_{2n+1}(\psi)\land q_{2n+2})\land q_{2n+3})\lor r\\
  & =
    \neg( \neg(\neg(g_{2n}(\psi)\land q_{2n+1})\land q_{2n+2})\land q_{2n+3})\lor r \\
    &\vdash_{\mathsf{r}_\land}  \neg( \neg\neg(g_{2n}(A)\land q_{2n+1})\land q_{2n+3})\lor r \\
    &\vdash_{\mathsf{r}_{\neg}}  \neg( g_{2n}(\psi)\land (q_{2n+1}\land q_{2n+3}))\lor r \\
        &\vdash_{\mathsf{r}_{2n+1}^{\frac{\varphi}{\psi}}} \neg( g_{2n}(\varphi)\land (q_{2n+1}\land q_{2n+3}))\lor r \\
       &\vdash_{\mathsf{r}_{\neg}}  \  \neg( \neg\neg(g_{2n}(\varphi)\land q_{2n+1})\land q_{2n+3})=\gamma_1
\end{align*}

 Further, 
   $$\gamma_0,\gamma_1\vdash_{\ru^{\nnot}_{\land}} \neg( \neg(\neg(g_{2n}(\varphi)\land q_{2n+1})\land q_{2n+2})\land q_{2n+3})\lor r = g_{2n+3}(\varphi)\lor r.$$
   
   Hence, $g_{2n+3}(\psi)\lor r \vdash_{\Ru_{n+}} g_{2n+3}(\varphi) \lor r$.
   
  % using $\Ru_{+}$
  
  \item%[{\bf(b)}] 
  $g_{2n+4}(\varphi)\lor r \vdash_{\Ru_{+n}} g_{2n+4}(\psi) \lor r$. 
%   

  % \item[(b)]
We have:
\begin{align*}
 \gamma_0&=g_{2n+4}(\varphi)\lor r\\
 &= \neg(g_{2n+3}(\varphi)\land q_{2n+4})\lor r\\
 & = 
  \neg( \neg(g_{2n+2}(\varphi)\land q_{2n+3})\land q_{2n+4})\lor r\\
   &=
    \neg( \neg(\neg(g_{2n+1}(\varphi)\land q_{2n+2})\land q_{2n+3})\land q_{2n+4})\lor r \\
      &\vdash_{\mathsf{r}_\land}  \neg( \neg\neg(g_{2n+1}(\varphi)\land q_{2n+2})\land q_{2n+4})\lor r  \\
    &\vdash_{\mathsf{r}_{\neg}}  \neg( g_{2n+1}(\psi)\land (q_{2n+2}\land q_{2n+4}))\lor r=\varphi_2 \\
        &\vdash_{\mathsf{r}_{2n+2}^{\frac{\varphi}{\psi}}}  \neg( g_{2n+1}(\psi)\land (q_{2n+2}\land q_{2n+4}))\lor r  \\
       &\vdash_{\mathsf{r}_{\neg}}  \neg( \neg\neg(g_{2n+1}(\psi)\land q_{2n+2})\land q_{2n+4})=\gamma_1
\end{align*}

 Further, 
   $$\gamma_0,\gamma_1\vdash_{\ru^{\nnot}_{\land}} \neg( \neg(\neg(g_{2n+1}(\psi)\land q_{2n+2})\land q_{2n+3})\land q_{2n+4})\lor r = g_{2n+4}(\psi)\lor r.$$
   
   Hence, $g_{2n+4}(\varphi)\lor r \vdash_{\Ru_{n+}} g_{2n+4}(\psi) \lor r$.

%  \end{itemize}

\end{enumerate}

\end{proof}

\begin{lemma}
\label{lem:regsnd}
The rule 
$\ru^{\nnot}_{\land}$
%=
%\frac{
%\nnot (\nnot p_1 \land p_2)\wedge
%\nnot (\nnot (p_3 \land p_4) \land p_2)
% }%\vdash
%{\nnot (\nnot (p_1 \land p_4) \land p_2)}
%$ 
 is sound in $ \vdash^{\leq}_{\SDM} $. % SDM-logic.
\end{lemma}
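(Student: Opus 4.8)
The plan is to deduce soundness of $\ru^{\nnot}_{\land}$ in $\vdash^{\leq}_{\SDM}$ from a corresponding statement about the Belnap--Dunn logic $\vdash^{\leq}_{\DM}$, via Lemma~\ref{lem:us} (and thus, ultimately, the Glivenko-style Lemma~\ref{lem:fey}). Since $\vdash^{\leq}_{\SDM}$ is an order-preserving logic, a rule $\frac{\Delta}{\phi}$ is sound w.r.t.\ it exactly when the inequality $\bigwedge_{\gamma\in\Delta}\gamma\leq\phi$ is valid in $\SDM$; in particular $\ru^{\nnot}_{\land}$ is sound precisely when
\[
\nnot (\nnot p_1 \land p_2) \land \nnot (\nnot (p_3 \land p_4) \land p_2) \leq \nnot (\nnot (p_1 \land p_4) \land p_2)
\]
holds in every semi-De Morgan algebra. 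Write $\chi$ and $\xi$ for the left- and right-hand sides, so the goal is $\SDM \vDash \chi \leq \xi$. The strategy has three steps: (1)~show that the formula-to-formula rule $\la \chi, \xi \ra$ is sound w.r.t.\ $\vdash^{\leq}_{\DM}$; (2)~invoke Lemma~\ref{lem:us} to conclude that $\la \chi^*, \xi^* \ra$ is sound w.r.t.\ $\vdash^{\leq}_{\SDM}$; (3)~check that $\chi^*$ and $\xi^*$ induce, on every semi-De Morgan algebra, the same term operations as $\chi$ and $\xi$, whence $\SDM \vDash \chi \leq \xi$.

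For step~(1), in any De Morgan algebra one has $\nnot(\nnot u \land v) = \nnot\nnot u \lor \nnot v = u \lor \nnot v$ by involutivity and the De Morgan laws, so $\chi$ evaluates to $(p_1 \lor \nnot p_2) \land ((p_3 \land p_4) \lor \nnot p_2)$, which by distributivity equals $(p_1 \land p_3 \land p_4) \lor \nnot p_2$, while $\xi$ evaluates to $(p_1 \land p_4) \lor \nnot p_2$; since $p_1 \land p_3 \land p_4 \leq p_1 \land p_4$, we obtain $\chi \leq \xi$. Step~(2) is then immediate from Lemma~\ref{lem:us}. For step~(3) one unfolds the recursive definition of $(\cdot)^*$: for instance $\xi^* = \nnot\big(\nnot(\nnot\nnot p_1 \land \nnot\nnot p_4) \land \nnot\nnot p_2\big)$, which by two applications of Lemma~\ref{lem:forleibn}(i) (first $\nnot(\nnot\nnot p_1 \land \nnot\nnot p_4) = \nnot(p_1 \land p_4)$, then $\nnot(\nnot(p_1 \land p_4) \land \nnot\nnot p_2) = \nnot(\nnot(p_1 \land p_4) \land p_2)$) reduces to $\xi$ on every semi-De Morgan algebra; the conjunct $\nnot(\nnot(p_3 \land p_4) \land p_2)$ of $\chi$ is treated identically, and the remaining conjunct gives $\big(\nnot(\nnot p_1 \land p_2)\big)^* = \nnot(\nnot\nnot\nnot p_1 \land \nnot\nnot p_2) = \nnot(\nnot p_1 \land p_2)$, using equation (SDM\ref{Itm:SD3}) and then Lemma~\ref{lem:forleibn}(i). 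Since $(\cdot)^*$ commutes with $\land$, this yields $\chi^* = \chi$ and $\xi^* = \xi$ as term operations over $\SDM$, completing the argument.

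The delicate point is step~(3): one must choose $\chi$ and $\xi$ so that their $(\cdot)^*$-images collapse back to themselves over $\SDM$, which is exactly what makes Lemma~\ref{lem:us} deliver the rule $\ru^{\nnot}_{\land}$ itself rather than merely some variant of it. The detour through the De Morgan companion $\A^*$ is genuinely needed: a more direct attempt --- e.g.\ rewriting $\chi$ as a single negation $\nnot U$ via (N2) and distributivity, $\xi$ as $\nnot V$, and trying to get $\chi \leq \xi$ from $V \leq U$ (or from $V \leq \nnot\nnot U$) --- fails, because semi-De Morgan algebras validate neither $x \leq \nnot\nnot x$ nor $\nnot(x \land y) \leq \nnot x \lor \nnot y$, so $\nnot$ is not an order-embedding and the inequality cannot be read off by antitonicity alone. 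Lemma~\ref{lem:us} is precisely what transports the De Morgan-valid inequality of step~(1) into $\SDM$, by re-interpreting it inside $\A^*$.
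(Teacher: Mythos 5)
Your proof is correct and follows essentially the same route as the paper: both reduce the two-premise rule to a single inequality, verify a corresponding inequality in De Morgan algebras, and transfer it to $\SDM$ via Lemma~\ref{lem:us} (hence Lemma~\ref{lem:fey}). The only cosmetic difference is that the paper picks auxiliary negation-light formulas $\phi,\psi$ whose $(\cdot)^*$-images agree with the rule's formulas over $\SDM$, whereas you apply $(\cdot)^*$ directly to the rule's own formulas and check that they are fixed (as term operations over $\SDM$) using Lemma~\ref{lem:forleibn}(i) and (SDM\ref{Itm:SD3}) --- the same computations in a slightly different order.
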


%$
%\neg(\neg p_1 \land p_2),
%\neg(\neg (p_3\land p_4) \land p_2)
% \vdash
%\neg(\neg (p_1\land p_4) \land p_2)
%$

\begin{proof}
Using the semi-De Morgan equations 
(SDM\ref{Itm:SD1})--(SDM\ref{Itm:SD3}), 
it is easy to show that 
the rule
$
\nnot (\nnot p_1 \land p_2) \land
\nnot (\nnot (p_3 \land p_4) \land p_2)
 \vdash
\nnot (\nnot (p_1 \land p_4) \land p_2)
$ 
is sound in $\vdash^{\leq}_{\SDM} $ if and only if 
$
( \nnot \nnot p_1 \lor^*   \nnot p_2) \land
 ( (\nnot \nnot p_3 \land \nnot \nnot p_4) \lor^* \nnot p_2)
 \vdash
 (\nnot \nnot p_1 \land \nnot \nnot p_4) \lor^* \nnot \nnot p_2
$ is sound in $\vdash^{\leq}_{\SDM} $. 
Letting 
$\phi := ( p_1 \lor  \nnot p_2) \land
 ( ( p_3 \land p_4) \lor  \nnot p_2)
$ and
$\psi: =  ( p_1 \land  p_4) \lor p_2$,
the rule $\phi \vdash \psi$ is easily seen to be sound in $\vdash^{\leq}_{\DM} $.
%w.r.t.~the Belnap-Dunn logic.
Moreover,
$\phi^* = ( \nnot \nnot p_1 \lor^*   \nnot p_2) \land
 ( (\nnot \nnot p_3 \land \nnot \nnot p_4) \lor^* \nnot p_2)
$
and 
$\psi^* =  (\nnot \nnot p_1 \land \nnot \nnot p_4) \lor^* \nnot \nnot p_2
$.
The soundness of
$\phi^* \vdash \psi^*$ w.r.t.~$\vdash^{\leq}_{\SDM} $ then
 follows from Lemma~\ref{lem:us}.
\end{proof}

\begin{theorem}
\label{thm:compl}
Let $\Eq \subseteq Fm \times Fm$ be a set of equations such that
$\mathsf{SDM} \subseteq \Eq$. % then $\dashv\vdash$
 Then
 $\Ru^{\Eq}_{+}\cup  \Ru_{\bullet}\cup \Ru_{\mathsf{F}}$  axiomatizes $\vdash^\leq_{\VV_{\Eq}}$.
% In particular, the set
%$%\Ru=
%\Ru^{\mathsf{SDM}}_{+}\cup  \Ru_{\bullet}\cup \Ru_{\mathsf{F}}$ axiomatizes 
%$\vdash^{\leq}_{\SDM} $.
%semi-De Morgan logic
\end{theorem}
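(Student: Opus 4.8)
The plan is to follow the same template as the proof of Theorem~\ref{thm:compl0}, replacing the infinite rule set $\Ru^{\Eq}_\omega$ by the finite one $\Ru^{\Eq}_{+}$ and keeping $\Ru_{\bullet}$ and $\Ru_{\mathsf{F}}$ in play. The soundness direction, $\vdash_{\Ru^{\Eq}_{+}\cup \Ru_{\bullet}\cup \Ru_{\mathsf{F}}}\ \subseteq\ \vdash^{\leq}_{\VV_{\Eq}}$, is routine: every rule in $\Ru^{\Eq}$ comes from an equation valid in $\VV_{\Eq}$ and hence is sound for the order-preserving logic; the rules $g_n$-transforms in $\Ru^{\Eq}_{+}=\Ru^{\Eq g}_{\leq 2}\cup\Su_{\bullet}$ are built from sound rules by the order-compatible operations $\land,\lor,\neg$ (monotone in the first two coordinates, antitone in the third), so they remain sound — except for the three rules of $\Su_{\bullet}$, whose soundness must be argued separately: $\ru_\land$ and $\ru_{\nnot}$ follow directly from (SDM\ref{Itm:SD1})--(SDM\ref{Itm:SD3}) (indeed from Lemma~\ref{lem:forleibn}(i)), and $\ru^{\nnot}_{\land}$ is exactly Lemma~\ref{lem:regsnd}; $\Ru_{\bullet}$ and $\Ru_{\mathsf{F}}$ are sound already in $\vdash^{\leq}_{\DN}$.

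For completeness, the chain of reductions established earlier does the heavy lifting. First, since $\mathsf{SDM}\subseteq\Eq$ we have $\Su_{\bullet}\subseteq\ \vdash^{\leq}_{\VV_{\Eq}}$, and in fact $\Su_{\bullet}\subseteq\ \vdash_{(\Ru^{\Eq})^g_\omega}$ once one checks that the three $\Su_{\bullet}$ rules are derivable from the $g$-transforms of the $\mathsf{SDM}$-equations together with $\Ru_{\bullet}$ (this is where one uses that $\Eq$ contains the semi-De Morgan equations, not merely $\mathsf{DN}$). Granting this, Proposition~\ref{prop++} gives $\vdash_{\Ru^{\Eq}_{+}}\ =\ \vdash_{(\Ru^{\Eq})^g_\omega}$. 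Next, $\Ru_{\bullet}\subseteq\ \vdash^{\leq}_{\DN}\ \subseteq\ \vdash_{\Ru^{\Eq}_\omega}$, so Lemma~\ref{prop:cong} yields $\vdash_{(\Ru^{\Eq})^g_\omega\cup\Ru_{\bullet}}\ =\ \vdash_{\Ru^{\Eq}_\omega}$. Chaining these, $\vdash_{\Ru^{\Eq}_{+}\cup\Ru_{\bullet}\cup\Ru_{\mathsf{F}}}\ =\ \vdash_{\Ru^{\Eq}_\omega\cup\Ru_{\mathsf{F}}}$, and the right-hand side equals $\vdash^{\leq}_{\VV_{\Eq}}$ by Theorem~\ref{thm:compl0}, since $\mathsf{DN}\subseteq\mathsf{SDM}\subseteq\Eq$. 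Hence $\Ru^{\Eq}_{+}\cup\Ru_{\bullet}\cup\Ru_{\mathsf{F}}$ axiomatizes $\vdash^{\leq}_{\VV_{\Eq}}$, as required.

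The main obstacle is the verification that $\Su_{\bullet}\subseteq\ \vdash_{(\Ru^{\Eq})^g_\omega}$ — i.e. deriving the three "semi-De Morgan" rules $\ru_\land$, $\ru_{\nnot}$, $\ru^{\nnot}_{\land}$ inside the $g$-closure of the equational rules for $\mathsf{SDM}$ plus $\Ru_{\bullet}$. This requires explicitly exhibiting Hilbert derivations that mimic, at the level of the $g_n$-wrapped formulas (with their stacked $\neg(\,\cdot\,\land q_i)$ layers), the equational manipulations behind (SDM\ref{Itm:SD1})--(SDM\ref{Itm:SD3}); one must be careful that the fresh variables $q,q_i$ are threaded correctly through the distributivity/associativity rules of $\Ru_{\bullet}$, exactly as in the bookkeeping of the proof of Lemma~\ref{prop:cong}. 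Once this derivability check is in hand, everything else is an assembly of already-proven equalities of consequence relations.
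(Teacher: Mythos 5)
Your overall architecture is exactly the paper's: soundness rule by rule (with Lemma~\ref{lem:regsnd} covering $\ru^{\nnot}_{\land}$), then completeness by chaining Theorem~\ref{thm:compl0}, Lemma~\ref{prop:cong} and Proposition~\ref{prop++}. The one place where you diverge is the step you call ``the main obstacle'': establishing $\Su_{\bullet}\subseteq\,\vdash_{(\Ru^{\Eq})^g_\omega}$ (or its variant with $\Ru_\bullet$ and $\Ru_{\mathsf{F}}$ added). You propose to do this by explicitly exhibiting Hilbert derivations of the three $\Su_{\bullet}$ rules from the $g$-transforms of the $\mathsf{SDM}$-equations, and you leave that verification open. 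That is not how the paper closes the step, and it is needless work: by the first two links of your own chain, $(\Ru^{\Eq})^g_\omega\cup\Ru_{\bullet}\cup\Ru_{\mathsf{F}}$ is already known to axiomatize $\vdash^{\leq}_{\VV_{\Eq}}$, so the required containment follows immediately from the \emph{semantic} fact $\Su_{\bullet}\subseteq\,\vdash^{\leq}_{\SDM}\subseteq\,\vdash^{\leq}_{\VV_{\Eq}}$ --- which you have already proved in your soundness paragraph via Lemma~\ref{lem:regsnd}. No derivation needs to be exhibited, and no bookkeeping with the $q_i$ is required; completeness of the infinite calculus converts soundness of $\Su_{\bullet}$ into derivability for free. (Note also that this is where the hypothesis $\mathsf{SDM}\subseteq\Eq$ genuinely enters: it is needed for the \emph{soundness} of $\Su_{\bullet}$ w.r.t.\ $\vdash^{\leq}_{\VV_{\Eq}}$, not for a syntactic derivation.) With that replacement your proof coincides with the paper's; as written, the proposal is correct in outline but incomplete at precisely the point where you announce the remaining difficulty, and the intended completion is harder than necessary.
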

\begin{proof}
Since $\mathsf{DN}\subseteq \mathsf{SDM}\subseteq \Eq$,  we know by Theorem~\ref{thm:compl0}
that
 %$\Ru^{ \mathsf{SDM}}_{\mathsf{F}} $  
 $\Ru^{\Eq}_\omega \cup \Ru_{\mathsf{F}}$
 axiomatizes $\vdash^\leq_{\VV_\Eq}$. %{\SDM}$.
% {\ {red}do the whole shabbang with the lemmas !}
From $\Ru_\bullet\,\subseteq\, \vdash^\leq_{\SDM}\,\subseteq\,\vdash^\leq_{\VV_{\Eq}}$ we obtain by Lemma~\ref{prop:cong} that $(\Ru^\Eq)^g_\omega\cup \Ru_\bullet$ axiomatizes $\vdash^\leq_{\VV_{\Eq}}$.
 Moreover, $\Su_\bullet\,\subseteq\, \vdash^\leq_{\SDM}\,\subseteq\,\vdash^\leq_{\VV_{\Eq}}$ (Lemma~\ref{lem:regsnd} deals with the less obvious case). %.  hence,
%To conclude the proof, it suffices to observe that
Hence, by Proposition~\ref{prop++} we conclude that  $\Ru^{\Eq}_{+}\cup  \Ru_{\bullet}\cup \Ru_{\mathsf{F}}$ axiomatizes $\vdash^\leq_{\VV_{\Eq}}$.
%$\Ru^{\mathsf{SDM}}_\omega\subseteq\, \vdash_{\Ru^{\mathsf{SDM}}_{+}\cup  \Ru_{\bullet}}$.
\end{proof}

%{\ {blue} 

\begin{example}\label{ex:SDMPL}
By Theorem~\ref{thm:compl}, the set
$%\Ru=
\Ru^{\mathsf{SDM}}_{+}\cup  \Ru_{\bullet}\cup \Ru_{\mathsf{F}}=(\Ru^{\mathsf{SDM}})^g_{\leq 2} \cup \Ru_\bullet\cup \Su_\bullet\cup \Ru_{\mathsf{F}}$ axiomatizes 
$\vdash^{\leq}_{\SDM}$. Since $(\Ru^{\mathsf{SDM}})^g_{\leq 2}=\Ru^{\mathsf{SDM}}\cup (\Ru^{\mathsf{SDM}})^g_{0}\cup (\Ru^{\mathsf{SDM}})^g_{1}\cup(\Ru^{\mathsf{SDM}})^g_{ 2}$,
%Note that this (brute force) 
the 
axiomatization thus obtained  %by applying our generic recipe
consists of $4\times %\mathsf{size}(
 | \mathsf{SDM} | + 
 %\mathsf{size}(
 | \Ru_\bullet\cup \Su_\bullet\cup \Ru_{\mathsf{F}} |
 %)
 =(4\times 16)+11=75$ rules, many of which  bidirectional.
However, it is not hard to see that $\Ru\subseteq \, \vdash_{(\Ru^{\mathsf{SDM}})^g_0\cup \Ru_{\bullet}}$,
%$\Ru_{\bullet}$ one $\Ru^{\mathsf{SDM}}$ follow from $(\Ru^{\mathsf{SDM}})^g_0\subseteq \Ru^{\mathsf{SDM}}_{+}$, 
which allows one to reduce the number of rules to
%conclude that 
 $(3\times %\mathsf{size}(
 | \Eq |
 )
 +11=59$. %rules also suffice.
Further simplifications are of course possible, and in particular cases one may obtain
a much more compact axiomatization. %(e.g.~the one for $\vdash^{\leq}_{\PDL}$ in~\cite{ReV94}).
%, in particular, in concrete cases, find nicer and shorter axiomatizations.
%one can reduce even more the redundancies, and in many particular cases find nicer and shorter axiomatizations.
%[FIX THE LAST SENTENCE]
%However, %being common among procedures of this sort, it
%%there 
%this (what? shitty axiomations!) is a common price to pay %, for modularity and generality, 
%among procedures of this sort.
%There are probably other simplifications one can produce i
%Clearly, due to the modularity 
A certain amount of redundancy in the set of rules obtained is the price we have to pay
for the generality and modularity of our approach. %the result of Theorem~\ref{thm:compl}.
Regarding the latter aspect, observe for instance that
$\PDL$ is axiomatized, relatively to $\SDM$, 
by adding the pseudo-complement equation 
%(PL)
%~\ref{Itm:PL} 
$x \land \nnot (x \land y) \approx x \land \nnot y $.
Adding the rule
$\Tfrac{p \land \nnot (p \land q)}{p \land \nnot q}\,\mathsf{r_P}$ is not sufficient, for
%In the context of $\SDM$ it is enough to add the rule $\frac{p \land \nnot (p \land q))}{\nnot q}\,\mathsf{r_P}$ to obtain the extra algebraic equation.
%However, to guarantee 
we also need to ensure %(using Theorem~\ref{thm:compl}) 
that the resulting logic be self-extensional. 
To achieve this, by Theorem~\ref{thm:compl}, it is enough to add
%we must add
the three rules: $g_i(\mathsf{r_P})$ for $0\leq i\leq 2$.
We then have that  $\vdash^\leq_\PDL$ is axiomatized over $\vdash^\leq_\SDM$ by
$\{g_i(\mathsf{r_P}):0\leq i\leq 2\}$.
%
%
%Regarding Table~\ref{tab:sd2}, observe that $\mathsf{r_{WP}}$ is obviously a weaker form of 
%pseudo-complement rule:
%
%\begin{center}
%  \begin{tabular}{ccc}
%  \\
%
%    \AxiomC{$(p \land \nnot (p \land q))  \lor r $}
%    \RightLabel{\, $\mathsf{r_P}$ %(Q)
%    }
%    \UnaryInfC{$\nnot q  \lor r$}
%    \DisplayProof \\ \\ %\vspace{-0.2cm}
%    
%  \end{tabular}   
%\end{center}
%
\end{example}
%However,
% }

Theorem~\ref{thm:compl} also provides a means to obtain (alternative) finite axiomatizations of other logics of order %intermediate between 
above $\vdash^{\leq}_{\SDM}$. %and classical logic.
 In particular, we can obtain a finite axiomatization of the logic
of $p$-lattices (Definition~\ref{def:sdmock}), i.e.~the implication-free fragment of intuitionistic logic,
that is alternative to the one introduced in~\cite{ReV94}. We provide a general formulation of this
observation below in Theorem~\ref{th:subva}.

\begin{theorem}
\label{th:subva}
Let $\VV \subseteq \SDM$ be a variety. 
The following  are equivalent:
\begin{enumerate}[(i)]
 \item $\VV$ is axiomatized by a finite set of equations.
 \item $\vdash^\leq_{\VV}$ is axiomatized by a finite set of finitary rule schemata.
\end{enumerate}
\end{theorem}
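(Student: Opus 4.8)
The plan is to prove the two implications separately, relying throughout on the dual isomorphism between subvarieties of $\DN$ and order-preserving logics (Theorem~\ref{thm:ramon}) together with the completeness machinery already set up.

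For the direction (i)$\Rightarrow$(ii), suppose $\VV \subseteq \SDM$ is axiomatized by a finite set of equations. Relative to $\DN$ (or even relative to $\SDM$) this means $\VV = \VV_{\Eq}$ for a finite set $\Eq$ with $\mathsf{SDM} \subseteq \Eq$ — concretely, take $\Eq$ to consist of the finitely many $\DN$-axioms, the three semi-De Morgan equations, and the finitely many extra equations axiomatizing $\VV$ over $\SDM$. Then I would simply invoke Theorem~\ref{thm:compl}: the set $\Ru^{\Eq}_{+}\cup \Ru_{\bullet}\cup \Ru_{\mathsf{F}}$ axiomatizes $\vdash^\leq_{\VV_{\Eq}} = \vdash^\leq_{\VV}$. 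Since $\Eq$ is finite, $\Ru^{\Eq} = \{\Tfrac{\varphi_i}{\psi_i}\}$ is finite, hence $\Ru^{\Eq}_{+} = (\Ru^{\Eq})^g_{\leq 2}\cup \Su_\bullet$ is finite (each rule spawns only finitely many instances $\mathsf{r}^{\frac{\varphi}{\psi}}_n$ for $n\leq 2$, and $\Su_\bullet$, $\Ru_\bullet$, $\Ru_{\mathsf{F}}$ are all finite), and each of these is a finitary rule schema. This gives a finite Hilbert calculus, so $\vdash^\leq_{\VV}$ is finitely based.

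For the harder direction (ii)$\Rightarrow$(i), suppose $\vdash^\leq_{\VV}$ is axiomatized by a finite set $\Ru$ of finitary rule schemata. I want to produce a finite equational basis for $\VV$ over $\SDM$. The natural move is to translate rules back into equations: to a rule $\frac{\{\gamma_1,\dots,\gamma_n\}}{\delta}$ associate the equation $\gamma_1 \land \dots \land \gamma_n \land \delta \approx \gamma_1 \land \dots \land \gamma_n$ (an axiom rule with empty premise set becomes $\delta \approx \top$). Let $\Eq_\Ru$ be the resulting finite set of equations together with the semi-De Morgan axioms, and let $\VV' := \VV_{\Eq_\Ru}$. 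One checks easily that $\VV' \supseteq \VV$ because each translated equation holds in every algebra of $\VV$ (soundness of $\Ru$ for $\vdash^\leq_{\VV}$, using that $\Gamma \vdash^\leq_{\K}\phi$ iff the corresponding meet-inequality is $\K$-valid). Conversely, I must show $\VV' \subseteq \VV$. Here is where Theorem~\ref{thm:ramon} earns its keep: it suffices to show $\vdash^\leq_{\VV'} = \vdash^\leq_{\VV}$, i.e. that the two logics coincide, since the assignment $\K \mapsto \vdash^\leq_{\K}$ is injective on subvarieties of $\DN$. Now $\VV' \subseteq \SDM$, so by Theorem~\ref{thm:compl} the logic $\vdash^\leq_{\VV'}$ is axiomatized by $\Ru^{\Eq_\Ru}_{+}\cup \Ru_{\bullet}\cup \Ru_{\mathsf{F}}$; and $\vdash^\leq_{\VV}$ is axiomatized by $\Ru$ together with whatever is needed to pin down $\SDM$ (which is harmless, as $\VV \subseteq \SDM$). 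The key point is the interderivability of the two rule sets: the equations $\Eq_\Ru$ were built precisely so that the rules $\Ru$ and the bidirectional rules $\Ru^{\Eq_\Ru}$ derive one another modulo the semilattice/lattice rules, and then the closure operations $(\cdot)^g$ and the auxiliary rules $\Ru_\bullet,\Su_\bullet$ only add consequences already valid in $\vdash^\leq_{\VV}$. So each calculus derives the other, whence $\vdash^\leq_{\VV'} = \vdash^\leq_{\VV}$ and therefore $\VV' = \VV$, exhibiting $\Eq_\Ru$ as a finite equational basis.

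The main obstacle I anticipate is the "conversely" step, i.e. verifying that the rule-to-equation translation really is faithful in the presence of the selfextensionality closure: one must confirm that passing from $\Ru$ to $\Ru^{\Eq_\Ru}$ and back (via the $(\cdot)_\omega$, $(\cdot)^g_\omega$ constructions and the fixed auxiliary rule sets) does not strengthen the logic — every rule $\mathsf{r}^{\frac{\varphi}{\psi}}_n$, every $\neg$-contrapositive, every $\lor$/$\land$-context instance must be shown sound for $\vdash^\leq_{\VV}$, which follows from the fact that $\vdash^\leq_{\VV}$ is selfextensional and order-preserving (so $\neg$ is order-reversing, $\lor,\land$ are monotone, etc.), but needs to be stated carefully. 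A secondary subtlety is bookkeeping the constants and the $\SDM$-axioms so that the hypotheses "$\mathsf{SDM}\subseteq \Eq$" of Theorem~\ref{thm:compl} and "$\Ru_\bullet, \Su_\bullet \subseteq \vdash^\leq$" are genuinely met; but these are routine once one fixes $\VV \subseteq \SDM$ at the outset.
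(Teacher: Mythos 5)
Your proposal is correct and follows essentially the same route as the paper: (i)$\Rightarrow$(ii) by direct appeal to Theorem~\ref{thm:compl}, and (ii)$\Rightarrow$(i) by translating each rule $\frac{\Gamma}{\phi}$ into the equation $\bigwedge\Gamma\land\phi\approx\bigwedge\Gamma$, adjoining the $\sdm$ axioms, and invoking Theorem~\ref{thm:ramon} to conclude $\VV=\VV'$ once the two order-preserving logics are shown to coincide. The paper's verification of that coincidence is slightly more economical (it gets $\vdash^\leq_{\VV'}\subseteq\vdash^\leq_{\VV}$ for free from $\VV\subseteq\VV'\subseteq\SDM$ and only needs $\Ru\subseteq\,\vdash^\leq_{\VV'}$ via the conjunction rules), but this is the same argument you sketch.
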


\begin{proof}
%{\ {red} we should at least prove one direction of the result?}
%Assuming $\VV$ is axiomatized by a finite set of equations, then 

%{\ {blue}
%Up Down: let $\Eq$ be the set of equations, then $\vdash^\leq_{\VV}$ is axiomatized by $\Ru^\Eq_{++}$ (by theorem...)

That (i) implies (ii) follows directly from Theorem~\ref{thm:compl}. 
%DOWn UP: make finitary rules into formula-formula, then into equations (why it is enough? - $\dashv\vdash$ a congruence and verifying $\Ru^\Eq$ means satisfying the equations...)}

For the other direction, assume (ii) holds, so $\vdash^\leq_{\VV}$ is axiomatized by a finite set $\Ru$ of finitary rule schemata.
Given a rule
$\ru = \frac{\Gamma}{\phi}$, let $\Eq(\mathsf{r})$ be the equation $\bigwedge \Gamma\land \phi \approx  \bigwedge\Gamma$. 
Let $\Eq_{\Ru} := \sdm \cup \{ \Eq (\ru) : \ru \in \Ru \}$. Observe that the set 
$\Eq_{\Ru}$ is finite, and let $\VV'$ be the variety defined by the equations $\Eq_{\Ru}$.
We claim that $\VV' = \VV$. Indeed, it is clear that $\VV \subseteq \VV'\subseteq \SDM$ and therefore $\vdash^\leq_\SDM\,\subseteq\,\vdash^\leq_{\VV'}\,\subseteq\,\vdash^\leq_{\VV}$. %by Theorem~\ref{thm:ramon}.
For the other direction, we start by observing that for each $\frac{\phi}{\psi}\in \Ru$ we have
$\Tfrac{\bigwedge \Gamma\land \phi}{ \bigwedge\Gamma}\in \Ru^{\Eq_\Ru}$.
This, together with the fact that 
$\frac{p\,,\, q}{p\land q},\frac{p\land q}{q}\,\in\, \vdash^\leq_{\SDM}\,\subseteq\, \vdash^\leq_{\VV'}$, implies that 
$\Ru\!\subseteq\,\vdash^\leq_{\VV'}$. Hence, $\vdash^\leq_{\VV}\,\subseteq\,\vdash^\leq_{\VV'}$.
% follows from
%$\Ru\subseteq \vdash^\leq_{\VV'}$ as 
% $\Tfrac{\bigwedge \Gamma\land \phi}{ \bigwedge\Gamma}\in \Ru^{\Eq_\Ru}$
% and 
%$\frac{p\,,\, q}{p\land q},\frac{p\land q}{q}\,\in\, \vdash^\leq_{\SDM}\subseteq \vdash^\leq_{\VV'}$.
%
%
%
%If $\VV \varsubsetneq \VV'$, then there is an equation $\phi \approx \psi$ that is valid
%in $\VV$ but not in $\VV'$, and hence $\vdash^\leq_{\VV}\not\subseteq\vdash^\leq_{\VV'}$.
%But this cannot be the case:
%it is not hard to see that $\Ru^{\Eq_\Ru}_{+}\cup \Ru_\bullet\cup S_\bullet\cup \Ru_\mathsf{F}\subseteq \vdash^\leq_{\VV}$.
%Further, 
%by Theorem~\ref{thm:compl}  we know that $\Ru^{\Eq_\Ru}_{+}\cup \Ru_\bullet\cup S_\bullet\cup \Ru_\mathsf{F}$ axiomatizes $\vdash^\leq_{\VV'}$.
From, $\vdash^\leq_{\VV'}=\vdash^\leq_{\VV}$ and Theorem~\ref{thm:ramon} we conclude that $\VV=\VV'$. 
%is  axiomatized by the finite set of equations $\Eq_\Ru$. 
%{\ {red}refer that~\cite[Lemma~3.8]{Ja06a}
%tells us that $\vdash^\leq_{\VV}$ is monotonous bijection on the lattice of semilattice-based varieties...we are using it implicitly here twice in this proof...}
%It is not hard to see that $\Ru^{\Eq_\Ru}_{+}\cup \Ru_\bullet\cup S_\bullet\cup \Ru_\mathsf{F}\subseteq $
%as all these rules are sound
%
%Given finite set of finitary rules $\Ru$ axiomatizing $\vdash^\leq_{\VV}$ we consider: 
%
%$\Eq_\Ru$ collective the equations in $\mathsf{SDM}$ plus
%$\bigwedge \Gamma\land \phi \approx  \bigwedge\Gamma$ for each $\frac{\Gamma}{\phi}\in \Ru$.
%
%Then for each $\frac{\Gamma}{\phi}\in \Ru$ we have
%$\Tfrac{\bigwedge \Gamma\land \phi}{ \bigwedge\Gamma}\in \Ru^{\Eq_\Ru}$.
%
%Using $\frac{p\,,\, q}{p\land q},\frac{p\land q}{q}\,\in\, \vdash^\leq_{\SDM}$ we obtain that each $\frac{\Gamma}{\phi}\in \Ru$ is derivable in $\Ru^{\Eq_\Ru}_{+}\cup \Ru_\bullet\cup S_\bullet\cup \Ru_\mathsf{F}$ (that we know by Theorem~\ref{thm:compl} that axiomatizes $\vdash^\leq_{\VV_{\Eq_\Ru}}$).
%
%Further, as $\Ru^{\Eq_\Ru}_{+}\cup \Ru_\bullet\cup S_\bullet\cup \Ru_\mathsf{F}\,\subseteq\, \vdash^\leq_{\VV}$ and $\Ru$ axiomatizes $\vdash^\leq_{\VV}$
%we conclude that $\vdash^\leq_{\VV}=\vdash^\leq_{\VV_{\Eq_\Ru}}$.
%
%Hence, $\VV=\VV_{\Eq_\Ru}$ is axiomatized by the finite set of equations $\Eq_\Ru$.
% produces the same logic and thus it is the same variety
\end{proof}

\section{Order-preserving logics of Berman varieties}
\label{sec:ber}

We have shown in Section~\ref{sec:sdm} how to obtain a finite axiomatization of
the order-preserving logic $\vdash^{\leq}_{\K}$ with   $\K \subseteq \SDM$.
Now, suppose $\K \subseteq \OC$ is a  variety of Ockham algebras.
As observed earlier, 
$\vdash^{\leq}_{\OC}$ is not finitely based. 
%we no longer have the triple negation equation (SDM\ref{Itm:SD3}) from Definition~\ref{def:sdmock},
%but we do have the  De Morgan equation (O), i.e.~$\neg (x \land y) \approx \neg x \lor \neg y$. 
%As observed earlier, this is not yet enough,
%for 
%$\vdash^{\leq}_{\OC}$ is not finitely based. 
However,
if we restrict our attention to a Berman variety $\OC_n^m$ of 
Ockham algebras, then we can adapt the technique employed 
in the preceding Section to obtain a finite Hilbert
axiomatization for $\vdash^{\leq}_{\OC_n^m}$ 
(an infinite one being directly given by
Theorem~\ref{thm:compl0}).
%
%in the present Section we shall adapt our techniques
%to obtain a finite axiomatization for
%the logic $\vdash^{\leq}_{\OC_n^m}$
%of every Berman variety
%$\OC_n^m$
%(an infinite one is already given by
%Theorem~\ref{thm:compl0}).

%we have shown how to obtain finite axiomatizations of  order-preserving logics  of varieties that fail $\frac{\neg (p\land q)}{\neg p\lor \neg q}$.
%Whenever this rule is satisfied things are easier...
%Already from Theorem~\ref{thm:compl0} would allows us to obtain an infinite axiomatization for $\vdash^\leq_{\OC^m_n}$,
%but in %as we have the other semi de morgan it is easy to directly give a finite axiomatization!
%next theorem we obtain a finite one. %axiomatization to $\vdash^\leq_{\VV_\Eq}$ for $\Eq$ finite and $\VV\subseteq \OC^m_n$.

From now on, let us fix a variety $\OC^m_n$, with $m,n<\omega$,
and let $\Eq^m_n$ be the equations axiomatizing $\OC^m_n$.
Let $k<\omega$ and let $t$ be a fresh variable. 
Given a rule $\frac{\phi}{\psi}$,
define 
$\mathsf{s}_{2k}^{\frac{\varphi}{\psi}}=\frac{\neg^{2k}(\varphi)\lor t}{\neg^{2k}(\psi)\lor t}$ and $\mathsf{s}_{2k+1}^{\frac{\varphi}{\psi}}=\frac{\neg^{2k+1}(\psi)\lor t}{\neg^{2k+1}(\varphi)\lor t}$. 
%Fixed $m,n<\omega$, let $\Eq^m_n$ be the equations axiomatizing $\OC^m_n$.
Letting:
$$
\frac{p\land q}{p} {\mathsf{r}_\land^1} \qquad %$ 
 \frac{p\land q}{q} {\mathsf{r}_\land^2} \qquad
 \frac{p\,,\,q}{p\land q} {\ru^{\mathsf{in}}_\land}  
 $$
define
%$\Ru^{mn}_\land : = \Ru^{\Eq^m_n}\cup \{\frac{p\land q}{p}\,\mathsf{r}_\land^1, \frac{p\land q}{q} \,\mathsf{r}_\land^2\}$, 
$\Ru^{mn}_\land : = \Ru^{\Eq^m_n}\cup \{ % \frac{p\land q}{p}\,
\mathsf{r}_\land^1, %\frac{p\land q}{q} \,
\mathsf{r}_\land^2\}$, 
% \{\frac{p\land q}{p} \mathsf{r}_\land^1, \frac{p\land q}{q}  \mathsf{r}_\land^2 ,\frac{p\,,\, q}{p\land q}$.
%where $\Ru_\land=\{\frac{p\land q}{p} \mathsf{r}_\land^1, \frac{p\land q}{q}  \mathsf{r}_\land^2 ,\frac{p\,,\, q}{p\land q}\}$,
%
%and
%$\mathcal{O}^m_n$ collect the rules in
 %$\mathsf{r}^{\mathsf{in}}_{\land}=\frac{p\,,\, q}{p\land q}$, 
%$\Ru_{\mathsf{F}}=\{\frac{}{\top},\frac{p\,,\,q}{p\land q}, \frac{p}{p \lor q}\}$ plus
%the following formula to formula rules in 
%$\Ru^{\Eq^m_n}$ (we name some that are relevant for the proof)
% {\ {red} check exactly what is necessary!}
%
and  $\mathcal{O}^m_n=\Ru^{mn}_\land\cup \{\mathsf{s}_i(\mathsf{r}):i\leq2m+n,\mathsf{r}\in\Ru^{mn}_\land \}\cup 
\{
%\frac{p\,,\,q}{p\land q}\,
\mathsf{r}^{\mathsf{in}}_{\land}\}.$
%for $i\leq2m+n$ where $\mathsf{r}$ is runs through 
 %collect 
% the formula to formula rules  in $\Ru^{\OC^m_n}$.

\begin{lemma}\label{l:mn}
 The relation $\dashv\vdash_{\mathcal{O}^m_n}$ is a congruence of $\Fm$.
\end{lemma}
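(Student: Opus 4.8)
The plan is to show that $\dashv\vdash_{\mathcal{O}^m_n}$ is compatible with each of the three connectives $\lor$, $\land$ and $\neg$, mimicking the inductive argument of Lemma~\ref{prop:cong}. Concretely, I would first observe that $\mathcal{O}^m_n$ contains $\Ru^{mn}_\land$ together with a shifted copy $\mathsf{s}_i(\ru)$ of each of its rules for every $i\leq 2m+n$, plus the conjunction-introduction rule $\ru^{\mathsf{in}}_\land$, and that $\ru^1_\land,\ru^2_\land,\ru^{\mathsf{in}}_\land$ jointly make $\dashv\vdash_{\mathcal{O}^m_n}$ interact well with $\land$. The core of the proof is then: for any $\varphi\vdash_{\mathcal{O}^m_n}\psi$ one must derive $\varphi\lor\gamma\vdash\psi\lor\gamma$, $\varphi\land\gamma\vdash\psi\land\gamma$ and $\neg\psi\vdash\neg\varphi$, by induction on the length of the derivation, reducing (by structurality) to the case where $\frac{\varphi}{\psi}$ is a single rule of $\mathcal{O}^m_n$.

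\textbf{Key steps.} First I would handle the base/propositional part: since $\Eq^m_n$ contains the bounded distributive lattice equations $\DNE$, the rules $\Ru^{\Eq^m_n}\subseteq\mathcal{O}^m_n$ already encode associativity, commutativity, distributivity, the lattice constants and the $\DN$-negation laws (N1)--(N2); in particular $\Ru_{\mathsf{C}}\subseteq\Ru^{\Eq^m_n}$ and the De Morgan rule $\neg(p\lor q)\dashv\vdash\neg p\land\neg q$ is available. Second, for compatibility with $\lor$: given $\frac{\varphi}{\psi}\in\mathcal{O}^m_n$, I would use the associativity/commutativity rules (already in $\Ru^{\Eq^m_n}$) to reorganize $(\varphi\lor\gamma)$ into the pattern $\varphi\lor\gamma$ expected by the rule, apply it, and reorganize back — exactly as in cases (i) of Lemma~\ref{prop:cong}. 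Third, for compatibility with $\land$: using $\mathsf{r}^\lor_\bot$-style tricks is unavailable here, so instead I would apply distributivity to push the conjunction inside, apply the rule, and recombine; alternatively, note that $\varphi\land\gamma\dashv\vdash\gamma\land\varphi$ and handle it symmetrically, invoking $\ru^1_\land,\ru^2_\land,\ru^{\mathsf{in}}_\land$ where a conjunct must be added or removed. Fourth, and most importantly, for compatibility with $\neg$: this is where the shifted rules $\mathsf{s}_i(\ru)$ enter. From $\varphi\vdash\psi$ via a rule $\ru$, one needs $\neg\psi\vdash\neg\varphi$; applying $\mathsf{s}_1(\ru)=\frac{\neg\psi\lor t}{\neg\varphi\lor t}$ (instantiating $t$ by $\bot$, available since $\Eq^m_n\supseteq\DNE$ gives $p\lor\bot\dashv\vdash p$) yields exactly this. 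More generally, to propagate through $\neg^j$ one iterates: each application of $\neg$ flips the direction and increases the index, and the Berman equation $\neg^{2m+n}x\approx\neg^n x$ (in $\Ru^{\Eq^m_n}$) guarantees that the index never needs to exceed $2m+n$, so the finitely many shifted copies $\{\mathsf{s}_i(\ru):i\leq 2m+n\}$ suffice to close the induction.

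\textbf{Main obstacle.} The delicate point is the $\neg$-case together with the bookkeeping of the shift index modulo the Berman identity: one must verify that whenever a derivation step occurs underneath $j$ nested negations, the required instance is $\mathsf{s}_{j \bmod (2m+n)\text{-adjusted}}(\ru)$ (with the parity determining the direction of the shifted rule, as reflected in the even/odd split in the definition of $\mathsf{s}_{2k}$ versus $\mathsf{s}_{2k+1}$), and that the equation $\neg^{2m+n}x\approx\neg^n x$ lets us fold any larger index back into the range $i\leq 2m+n$. This requires an auxiliary sub-induction showing that for every $j<\omega$ and every rule $\ru\in\Ru^{mn}_\land$ the shifted rule $\mathsf{s}_j(\ru)$ (with appropriate direction) is derivable in $\mathcal{O}^m_n$ — essentially the analogue of Proposition~\ref{prop++} in the Ockham setting. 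Once that derivability is in place, the three compatibility claims follow by the same routine induction on derivation length as in Lemma~\ref{prop:cong}, and hence $\dashv\vdash_{\mathcal{O}^m_n}$ is a congruence of $\Fm$.
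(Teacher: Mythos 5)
Your proposal is correct and follows essentially the same route as the paper: induction on derivation length reduced (by structurality) to the individual rules of $\mathcal{O}^m_n$, with the lattice rules handling $\lor$- and $\land$-compatibility, the shifted rules $\mathsf{s}_i$ (with the parity flip) handling $\neg$-compatibility, and the Berman identity $\neg^{2m+n}x\approx\neg^n x$ folding any index exceeding $2m+n$ back into range --- which is exactly the two-case analysis the paper performs for clause (iii) of the shifted rules. The only minor point you gloss over is that, because of the two-premise rule $\ru^{\mathsf{in}}_\land$, the negation clause must be stated in the generalized form $\neg\varphi\vdash\bigvee_{\gamma\in\Gamma}\neg\gamma$ for the induction to go through, which the Ockham De Morgan rule makes available.
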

\begin{proof}
The key difference with the cases considered in the previous Sections 
is that $\Tfrac{\neg (p\land q)}{\neg p\lor \neg q}\,\mathsf{r}_\mathsf{dm}^{\land\lor}\in\Ru^{mn}_\land$, 
%is available,
 but recall that the following rules are also in $\Ru^{mn}_\land$: %$R^{\Eq^m_n}$:

$$%\frac{p\land q}{p} \mathsf{r}_\land^1\qquad \frac{p\land q}{q}  \mathsf{r}_\land^2 %\qquad \frac{p\land q}{q\land p}
%\qquad 
 \Tfrac{(p\land q)\land r}{p\land (q\land r)}\,\mathsf{r}_\mathsf{ass}^\land\qquad  \Tfrac{(p\lor q)\lor r}{p\lor (q\lor r)}\,\mathsf{r}_\mathsf{ass}^\lor %$$\\[-.8cm]
%
%$$%\frac{p}{\top}\qquad 
%\frac{\bot}{p}\qquad 
%\frac{p \lor\bot}{p} {\mathsf{r}^\lor_\bot}
\qquad \Tfrac{\neg^{2m+n}p}{\neg^{n}p} {\mathsf{r}^m_n}\qquad
%$$
%$$%\Dfrac{\neg p\land \neg q}{\neg (p\lor q)}\qquad
 %\Dfrac{\neg (p\land q)}{\neg p\lor \neg q} \mathsf{r}_\mathsf{dm}^{\land\lor}\qquad %\Dfrac{p\land (q\lor r)}{(p\land q)\lor (p\land r)}\qquad 
\Tfrac{p\lor (q\land r)}{(p\lor q)\land (p\lor r)}\,\mathsf{r}_\mathsf{dist}^{\lor\land}$$

%Following the general strategy in this paper, it suffices to show that $ \phi\vdash \psi$ implies that
% $$\text{(i)} \phi\lor u\vdash \psi\lor u \qquad \text{(ii)}  \phi\land u\vdash \psi\land u \qquad \text{(iii)} \neg \psi\vdash \neg  \phi.$$
% %
 %For that purpose, as we have $\mathsf{r}_\mathsf{dm}^{\land\lor}$, we will show something stronger: 
 In the presence of $\mathsf{r}_\mathsf{dm}^{\land\lor}$, we can show directly that, if $\Gamma\vdash \varphi$, then
  $$\text{(i)} \Gamma\lor u\vdash \varphi\lor u \qquad \text{(ii)} \Gamma\land u\vdash \varphi\land u \qquad \text{(iii)} \neg \varphi\vdash \bigvee_{\gamma\in \Gamma} \neg \gamma$$
  where $u$ is a fresh variable. 
Once more it is enough to show that (i)--(iii)
are satisfied when % we can apply the operations above to each %rule in 
 $\mathsf{r}=\frac{\Gamma}{\varphi}\in\mathcal{O}^m_n$.
 
 For $\mathsf{r}%\mathsf{r}^{\mathsf{in}}_\land
 =\frac{p\,,\, q}{p\land q}$, we have:
  %(case $\mathsf{r}=\frac{p}{p\lor q}$ is similar...)
 
 \begin{itemize}
 \item[(i)] $p\lor u,q\lor u\vdash_\mathsf{r} (p\lor u)\land (q\lor u) \vdash_{\mathsf{r}_\mathsf{dist}^{\lor\land} } (p\land q)\lor u$
 
   \item[(ii)] $p\land u,q\land u\vdash_\mathsf{r} (p\land u)\land (q\land u) \vdash_{\mathsf{r}_\mathsf{ass}^\land} (p\land q)\land u$
 \item[(iii)] $\neg (p\land q) \vdash_{\mathsf{r}_\mathsf{dm}^{\land\lor}} \neg p \lor \neg q$.
\end{itemize} 
 
 For  %formula to formula
  $\mathsf{r}=\frac{\varphi}{\psi}\in \Ru^{mn}_\land$, we have: %in the list %=\mathsf{s}_{0}(\frac{\varphi}{\psi})$ with $\frac{\varphi}{\psi}$ in the above list %\in \Ru^\Eq$:
 
\begin{itemize}
 \item[(i)] $\varphi\lor u \vdash_{\mathsf{s}^{\frac{\varphi}{\psi}}_0}  \psi\lor u$
  \item[(ii)] $\varphi\land u\vdash_{\mathsf{r}_\land^j} \varphi,u \vdash_{\mathsf{r}} \psi,u \vdash_{\mathsf{r}^{\mathsf{in}}_\land} \psi\land u$
 \item[(iii)] $\neg \psi \vdash_{{\mathsf{r}^\lor_\bot}} \neg \psi\lor \bot  \vdash_{\mathsf{s}^{\frac{\varphi}{\psi}}_1} \neg \varphi \lor \bot \vdash_{\mathsf{r}^\lor_\bot} \neg \varphi$.
\end{itemize}

For 
$\mathsf{s}_{k}^{\frac{\varphi}{\psi}}\in  \{\mathsf{s}_i(\mathsf{r}):i\leq2m+n,\mathsf{r}\in\Ru^{mn}_\land \}$, %with
%$k<\omega$,
  %then
   %
we let %as %in Example~\ref{ex:patterng} $E=C$ and $F=D$ if $k=2n$, or $E=D$ and $F=C$  if  $k=2n+1$.
%in Example~\ref{ex:patterng}, 
$\gamma^\mathsf{up}_k=\varphi$ and $\gamma^\mathsf{dn}_k=\psi$ if $k$ is odd, and $\gamma^\mathsf{up}_k=\varphi$ and $\gamma^\mathsf{dn}_k=\psi$  if $k$ is even. We can write $\mathsf{s}_{k}^{\frac{\varphi}{\psi}}=\frac{\neg^{k}(\gamma^\mathsf{up}_k)\lor q}{\neg^{k}(\gamma^\mathsf{dn}_k)\lor q}$, as in Example~\ref{ex:patterng}.
  %$k=2n+1$.

% $$\gamma^\mathsf{up}_n=
%\begin{cases}
% \varphi &\text{for even }n \\
%  \psi &\text{for odd }n\\
%\end{cases} \text{ and }
%\gamma^\mathsf{dn}_n=
%\begin{cases}
% \psi &\text{for even }n \\
%  \varphi &\text{for odd }n.
%\end{cases} 
%$$

 \begin{itemize}
 \item[(i)] $(\neg^{k}(\gamma^\mathsf{up}_k)\lor q)\lor r \vdash_{\mathsf{r}_\mathsf{ass}^\lor} \neg^{k}(\gamma^\mathsf{up}_k)\lor (q\lor r ) \vdash_{\mathsf{s}^{\frac{\varphi}{\psi}}_{k}}   \neg^{k}(\gamma^\mathsf{dn}_k)\lor (q\lor r ) \vdash (\neg^{k}(\gamma^\mathsf{dn}_k)\lor q)\lor r $
  \item[(ii)] $(\neg^{k}(\gamma^\mathsf{up}_k)\lor q)\land r \vdash_{\mathsf{r}_\land^j}(\neg^{k}(\gamma^\mathsf{up}_k)\lor q), r \vdash_{\mathsf{s}^{\frac{\varphi}{\psi}}_{k}} (\neg^{k}(\gamma^\mathsf{dn}_k)\lor q), r \vdash (\neg^{k}(\gamma^\mathsf{dn}_k)\lor q)\land r $
 \item[(iii)] To show that
 $\neg(\neg^{k}(\gamma^\mathsf{dn}_k)\lor q)\vdash %_{\mathsf{s}^{\frac{C}{D}}_{k+1}} 
 %\neg^{k+1}(\gamma^\mathsf{up}_k)\lor \neg q
 \neg(\neg^{k}(\gamma^\mathsf{up}_n)\lor q)
 $ we must consider two cases.

 If $k+1\leq 2m+n$, then 
 $$\neg(\neg^{k}(\gamma^\mathsf{dn}_k)\lor q)\vdash_{\mathsf{r}_\mathsf{dm}^{\land\lor}}\neg^{k+1}(\gamma^\mathsf{dn}_k)\lor \neg q\vdash_{\mathsf{s}_{k+1}^{\frac{\varphi}{\psi}}} \neg^{k+1}(\gamma^\mathsf{up}_k)\lor \neg q\vdash_{\mathsf{r}_\mathsf{dm}^{\land\lor}} \neg(\neg^{k}(\gamma^\mathsf{dn}_k)\lor q).$$
  Otherwise, let $k+1=n+(i2m+j)$ for $i>0$ and $0\leq j< 2m$ (and thus $n+j< 2m+n$).
 
 %Clearly $\neg^{k+1}(p)\lor r=\neg^{n+(i2m+j)}(p)\lor r\dashv\vdash \neg^{n+j}(p)\lor r$ and $n+j< 2m+n$.
 
 We have: %\vspace{-.}
\begin{align*}
 \neg(\neg^{k}(\gamma^\mathsf{dn}_k)\lor q)&\vdash_{\mathsf{r}_\mathsf{dm}^{\land\lor}} 
 \neg\nolimits^{k+1}(\gamma^\mathsf{dn}_k)\land \neg q\\
 &\vdash_{\mathsf{r}^m_n} \neg\nolimits^{n+j}(\gamma^\mathsf{dn}_k)\land \neg q\\
 &\vdash_{\mathsf{s}_{k+1}^{\frac{\varphi}{\psi}}}  \neg\nolimits^{n+j}(\gamma^\mathsf{up}_k)\land \neg q\\
 %&\vdash \neg\nolimits^{n+j}(\gamma^\mathsf{dn}_k)\lor \neg q\\
 &\vdash_{\mathsf{r}^m_n} \neg\nolimits^{k+1}(\gamma^\mathsf{up}_k)\land \neg q \\
& \vdash_{\mathsf{r}_\mathsf{dm}^{\land\lor}}  \neg(\neg\nolimits^{k}(\gamma^\mathsf{dn}_k)\lor q)
\end{align*}
 %
% $\neg^{k+1}(F)\lor \neg q\vdash \neg^{n+j}(E)\lor \neg q\vdash  \neg^{n+j}(E)\lor \neg q
% \vdash \neg^{n+j}(E)\lor \neg q
% \vdash \neg^{k+1}(F)\lor \neg q \vdash
%  \neg(\neg^{k}(E)\lor q)$$
 \end{itemize}

\end{proof}
 %rules above.

%Fixed $m$ and $n$ let $\Ru^{\Eq}=\Ru^m_n^{\Eq}$.
%Let $S_{k}=\Ru^{\Eq}\cup \{\mathsf{s}_{i}^{\frac{A}{B}}:i< k\}\cup \{\frac{p\,,\, q}{p\land q}\}$

\begin{theorem}
The set of rules $\mathcal{O}^m_n\cup \{\frac{}{\top}\}$ axiomatizes 
$\vdash^\leq_{\OC^m_n}$ %is finitely axiomatized by .
%pra cada $m,n<\omega$ a Logica dos chineses axiomatizada por $S_{2m+n}$.
\end{theorem}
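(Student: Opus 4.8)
The plan is to establish the two inclusions
$\vdash_{\mathcal{O}^m_n\cup\{\frac{}{\top}\}}\,\subseteq\,\vdash^\leq_{\OC^m_n}$
(soundness) and
$\vdash^\leq_{\OC^m_n}\,\subseteq\,\vdash_{\mathcal{O}^m_n\cup\{\frac{}{\top}\}}$
(completeness) separately; the second is the substantive one and will be proved by mimicking the argument of Theorem~\ref{thm:compl0}, using Lemma~\ref{l:mn} in place of Lemma~\ref{cong}. For soundness, I would check rule by rule that each rule of $\mathcal{O}^m_n\cup\{\frac{}{\top}\}$ is valid in $\vdash^\leq_{\OC^m_n}$. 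The bidirectional rules of $\Ru^{\Eq^m_n}$ are sound because each equation of $\Eq^m_n$ holds in $\OC^m_n$; the rules $\mathsf{r}_\land^1$, $\mathsf{r}_\land^2$, $\mathsf{r}^{\mathsf{in}}_\land$ and $\frac{}{\top}$ are sound because the designated set of any matrix appearing in the definition of $\vdash^\leq_{\OC^m_n}$ is a non-empty lattice filter; and a rule $\mathsf{s}_i(\mathsf{r})$ obtained from a sound $\mathsf{r}=\frac{\varphi}{\psi}$ is sound because $\nnot$ is order-reversing, so $\nnot^{2k}$ is monotone and $\nnot^{2k+1}$ is antitone, while $\lor$ is monotone in each argument; hence $\nnot^{2k}\varphi\lor t\leq\nnot^{2k}\psi\lor t$ and $\nnot^{2k+1}\psi\lor t\leq\nnot^{2k+1}\varphi\lor t$ whenever $\varphi\leq\psi$ holds in $\OC^m_n$.

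For completeness, set $\vdash\,:=\,\vdash_{\mathcal{O}^m_n\cup\{\frac{}{\top}\}}$ and suppose $\Gamma\not\vdash\varphi$. By Lemma~\ref{l:mn} the relation $\equiv\,:=\,\dashv\vdash_{\mathcal{O}^m_n}$ is a congruence of $\Fm$, and since $\vdash_{\mathcal{O}^m_n}\,\subseteq\,\vdash$ the set $F:=\Gamma^{\vdash}/{\equiv}$ is compatible with $\equiv$; in particular $\Gamma^\vdash$ is a union of $\equiv$-classes. Because the bidirectional rules $\Ru^{\Eq^m_n}\subseteq\mathcal{O}^m_n$ force $[\varphi_i]=[\psi_i]$ for every $\varphi_i\approx\psi_i$ in $\Eq^m_n$, and $\OC^m_n\subseteq\DN$, the quotient $\Fm/{\equiv}$ validates all of $\Eq^m_n$ and is therefore in $\OC^m_n$; in particular it is a bounded distributive lattice. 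One then checks, exactly as in the proof of Theorem~\ref{thm:compl0}, that $F$ is a non-empty lattice filter of $\Fm/{\equiv}$: it contains $[\top]$ by the rule $\frac{}{\top}$, it is closed under meets by $\mathsf{r}^{\mathsf{in}}_\land$, and it is upward closed by $\mathsf{r}_\land^1$ and $\mathsf{r}_\land^2$ (using that $[\alpha]\leq[\beta]$ in $\Fm/{\equiv}$ means $\alpha\land\beta\equiv\alpha$). Hence $\langle\Fm/{\equiv},F\rangle$ is one of the matrices defining $\vdash^\leq_{\OC^m_n}$, and the canonical projection $\pi\colon Fm\to Fm/{\equiv}$ is a valuation with $\pi(\Gamma)\subseteq F$ but $\pi(\varphi)=[\varphi]\notin F$ (the latter because $\varphi\notin\Gamma^\vdash$ and $\Gamma^\vdash$ is $\equiv$-saturated). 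Therefore $\Gamma\not\vdash^\leq_{\OC^m_n}\varphi$, which yields the desired inclusion.

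The genuinely delicate step has in fact already been carried out: it is Lemma~\ref{l:mn}, where one must verify that $\dashv\vdash_{\mathcal{O}^m_n}$ is a congruence despite $\Ru^{mn}_\land$ containing the De Morgan rule $\mathsf{r}_\mathsf{dm}^{\land\lor}$, which forces one to push the exponent of $\nnot$ around the cycle of length $2m+n$ via $\mathsf{r}^m_n$ so that only the finitely many shifts $\mathsf{s}_i$ with $i\leq 2m+n$ are needed. Given that lemma, the remaining work for the theorem is the routine soundness check above together with the standard Lindenbaum-style matrix argument; the one point deserving a line of care is confirming that the finite fragment $\mathcal{O}^m_n$ of the infinite calculus $\Ru^{\Eq^m_n}_\omega\cup\Ru_{\mathsf{F}}$ supplied by Theorem~\ref{thm:compl0} still generates a congruence whose quotient satisfies all of $\Eq^m_n$, which is immediate since $\Ru^{\Eq^m_n}$ itself is retained inside $\mathcal{O}^m_n$.
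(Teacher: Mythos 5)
Your proposal is correct and follows essentially the same route as the paper: soundness is checked rule by rule, and completeness is obtained by repeating the Lindenbaum-style matrix construction of Theorem~\ref{thm:compl0} with Lemma~\ref{l:mn} supplying the congruence property. The only detail the paper leaves implicit that you rightly make explicit is that upward closure of $F$ must now come from $\mathsf{r}_\land^1,\mathsf{r}_\land^2$ rather than from $\frac{p}{p\lor q}\in\Ru_{\mathsf{F}}$.
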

\begin{proof}
It is clear that $\mathcal{O}^m_n\cup \{\frac{}{\top}\}\subseteq\, \vdash^\leq_{\OC^m_n}$. %are sound.
 %As $\Ru^{\Eq}\cup \{\frac{p\,,\, q}{p\land q}\}$ are sound, 
 %we just need to show that it closed for 
% Enough to show that it ha
Completeness follows by a similar reasoning as in Theorem~\ref{thm:compl0} from Lemma~\ref{l:mn}.
%Given 
%let $\vdash\,=\,\vdash_{\mathcal{O}^m_n}$.
 % This guarantees that $\dashv\vdash$ is a $\Fm$-congruence, the rest follows from $\Ru_{\mathsf{F}}=\{\frac{}{\top},\frac{p\,,\,q}{p\land q}, \frac{p}{p \lor q}\}$...
\end{proof}
We note that, given $\Eq$ such that $\VV^\Eq\subseteq \OC^m_n$, it is easy to see that
$\vdash^\leq_{\VV^\Eq}$ is axiomatized, relatively to  $\vdash^\leq_{\OC^m_n}$,
 by the set $\Ru^{\Eq} \cup \{\mathsf{s}_i(\mathsf{r}):i\leq2m+n,\mathsf{r}\in\Ru^{\Eq} \}$. 
Hence, if $\Eq$ is finite then $\vdash^\leq_{\VV^\Eq}$ is finitely based.
In fact, one can easily adapt the argument of Theorem~\ref{th:subva} to obtain the following:
%{\ {blue}Further, we can adapt the argument of Theorem~\ref{th:subva} to obtain the equivalence.}

\begin{corollary}
\label{cor:subvaoc}
Let $\VV \subseteq \OC^m_n$ be a variety. 
The following  are equivalent:
\begin{enumerate}[(i)]
 \item $\VV$ is axiomatized by a finite set of equations.
 \item $\vdash^\leq_{\VV}$ is axiomatized by a finite set of finitary rule schemata.
\end{enumerate}
\end{corollary}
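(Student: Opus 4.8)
The plan is to mirror exactly the structure of the proof of Theorem~\ref{th:subva}, replacing $\SDM$ throughout by the fixed Berman variety $\OC^m_n$ and invoking the Berman-variety machinery of this Section in place of the semi-De Morgan machinery. The direction (i)$\Rightarrow$(ii) should follow essentially for free: by the remark immediately preceding the statement, if $\Eq$ is a finite equational basis for $\VV$ (relative to $\OC^m_n$), then $\vdash^\leq_{\VV^\Eq}$ is axiomatized over $\vdash^\leq_{\OC^m_n}$ by the finite set $\Ru^{\Eq}\cup\{\mathsf{s}_i(\mathsf{r}):i\leq 2m+n,\ \mathsf{r}\in\Ru^{\Eq}\}$, and since $\vdash^\leq_{\OC^m_n}$ is itself finitely based by the preceding Theorem, the union is a finite Hilbert calculus for $\vdash^\leq_{\VV}$. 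One should be a little careful that ``relative to $\OC^m_n$'' is handled correctly: take $\Eq$ to already contain $\Eq^m_n$, so that $\Ru^{\Eq}\supseteq \Ru^{\Eq^m_n}$ and the congruence argument of Lemma~\ref{l:mn} goes through verbatim with $\Ru^{mn}_\land$ enlarged by the new rules.

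For the converse (ii)$\Rightarrow$(i), I would copy the argument of Theorem~\ref{th:subva} line by line. Suppose $\vdash^\leq_{\VV}$ is axiomatized by a finite set $\Ru$ of finitary rule schemata. For each $\ru=\frac{\Gamma}{\phi}\in\Ru$ form the equation $\Eq(\ru):=\bigwedge\Gamma\land\phi\approx\bigwedge\Gamma$, and set $\Eq_{\Ru}:=\Eq^m_n\cup\{\Eq(\ru):\ru\in\Ru\}$, a finite set of equations. Let $\VV'$ be the variety it defines; clearly $\VV\subseteq\VV'\subseteq\OC^m_n$, hence $\vdash^\leq_{\OC^m_n}\subseteq\vdash^\leq_{\VV'}\subseteq\vdash^\leq_{\VV}$. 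For the reverse inclusion $\vdash^\leq_{\VV}\subseteq\vdash^\leq_{\VV'}$, observe that for each $\ru=\frac{\Gamma}{\phi}\in\Ru$ the bidirectional rule $\Tfrac{\bigwedge\Gamma\land\phi}{\bigwedge\Gamma}$ lies in $\Ru^{\Eq_{\Ru}}$, and together with the lattice-filter rules $\frac{p\,,\,q}{p\land q}$ and $\frac{p\land q}{q}$ (which are sound in $\vdash^\leq_{\OC^m_n}\subseteq\vdash^\leq_{\VV'}$) this yields $\ru\in\vdash^\leq_{\VV'}$; since $\Ru$ axiomatizes $\vdash^\leq_{\VV}$, we get $\vdash^\leq_{\VV}\subseteq\vdash^\leq_{\VV'}$. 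Thus $\vdash^\leq_{\VV'}=\vdash^\leq_{\VV}$, and by the dual isomorphism of Theorem~\ref{thm:ramon} we conclude $\VV=\VV'$, so $\VV$ is finitely axiomatized by $\Eq_{\Ru}$.

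I do not expect any serious obstacle, since both halves are adaptations of arguments already carried out in the paper; the main point requiring care is purely bookkeeping. One must check that the ``relatively to $\OC^m_n$'' phrasing in the remark preceding the Corollary is consistent with the absolute finite axiomatizability needed in (i): this is fine because $\Eq^m_n$ is itself a finite set (the single Berman equation $\nnot^{2m+n}x\approx\nnot^n x$ added to $\DNE$ plus (SDM\ref{Itm:SD1}) plus (O)), so a finite basis relative to $\OC^m_n$ together with $\Eq^m_n$ gives an absolute finite basis, and conversely. The only other thing to verify is that $\Ru^{mn}_\land$ as enlarged in the (i)$\Rightarrow$(ii) direction still satisfies the hypotheses of Lemma~\ref{l:mn} and of the preceding Theorem, namely that it contains the associativity, distributivity, De~Morgan and $\mathsf{r}^m_n$ rules; this holds because those come from $\Ru^{\Eq^m_n}\subseteq\Ru^{\Eq}$ and are untouched by adding further rules. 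Hence the argument of Theorem~\ref{th:subva} transfers without change.
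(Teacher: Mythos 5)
Your proposal is correct and follows exactly the route the paper intends: the paper itself gives no separate proof of this corollary, stating only that one adapts Theorem~\ref{th:subva} using the remark that $\Ru^{\Eq}\cup\{\mathsf{s}_i(\mathsf{r}):i\leq 2m+n,\ \mathsf{r}\in\Ru^{\Eq}\}$ axiomatizes $\vdash^\leq_{\VV^\Eq}$ relatively to $\vdash^\leq_{\OC^m_n}$, which is precisely what you do. Your bookkeeping points (absorbing $\Eq^m_n$ into the finite basis, and checking soundness of the filter rules in $\vdash^\leq_{\VV'}$) are exactly the right details to verify.
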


%The converse also follows using a similar ar Theorem~\ref{th:subva}.

%\bibliographystyle{plain} %alpha
%\bibliography{logalg}

\begin{thebibliography}{1}

\bibitem{AlPrRi}
H.~Albuquerque,  A.~Prenosil, and U.~Rivieccio. 
\newblock An algebraic view of super-Belnap logics.
\newblock {\em Studia Logica}, 105(6):1051--1086, 2017.


%\bibitem{Almeida09}
%A. Almeida.
%\newblock Canonical Extensions and Relational Representations of Lattices with Negation.
%\newblock {\em Studia Logica}, 91:171--199, 2009.


\bibitem{Avron05} 
A. Avron and I. Lev. 
\newblock Non-deterministic multiple-valued structures. 
\newblock {\em Journal of Logic and Computation}, 15(3):241Ð261, 2005.

%A. Avron and A. Zamansky. 
%\newblock Non-deterministic semantics for logical systems. 
%In D. Gabbay and F. Guenthner, editors, {\em Handbook of Philosophical Logic}, volume 16, pages 227--304. Springer, 2011.

%\bibitem{BaDw74}
%R.~Balbes and P.~Dwinger.
%\newblock {\em Distributive lattices}.
%\newblock University of Missouri Press, Columbia, Mo., 1974.

\bibitem{Baaz13} 
 M. Baaz, O. Lahav, and A. Zamansky. 
\newblock Finite-valued semantics for canonical labelled calculi. 
\newblock {\em Journal of Automated Reasoning}, 51(4):401Ð430, 2013.

\bibitem{Be76}
N.~D. Belnap.
\newblock How a computer should think.
\newblock In G.~Ryle, editor, {\em Contemporary Aspects of Philosophy}, pages
  30--56. Oriel Press, Boston, 1976.

\bibitem{Be77}
N.~D. {Belnap, Jr.}
\newblock A useful four-valued logic.
\newblock In J.~M. Dunn and G.~Epstein, editors, {\em Modern uses of
  multiple-valued logic ({F}ifth {I}nternat. {S}ympos., {I}ndiana {U}niv.,
  {B}loomington, {I}nd., 1975)}, pages 5--37. Episteme, Vol. 2. Reidel,
  Dordrecht, 1977.

\bibitem{Ber77}
J.~Berman.
\newblock Distributive lattices with an additional unary operation.
\newblock {\em Aequationes Mathematicae}, 16(1-2):165--171., 1977.

\bibitem{BuSa00}
S.~Burris and H.~P. Sankappanavar.
\newblock {\em A course in {U}niversal {A}lgebra}.
\newblock The {M}illennium edition, 2000.


\bibitem{soco}
C.~Caleiro, S.~Marcelino, and J.~Marcos.
\newblock Combining fragments of classical logic: When are interaction principles needed?
\newblock {\em Soft Computing}, 23(7): 2213--2231, 2019

 \bibitem{CaMaAXS}
C.~Caleiro. and S.~Marcelino.
\newblock On axioms and rexpansions.
%\newblock {\em Submitted}, 2017.
 {\em Outstanding Contributions to Logic} (in print) 
 
  \bibitem{charfinval}
 C.~Caleiro, S.~Marcelino, U.~Rivieccio,
\newblock Characterizing finite-valuedness,
\newblock {\em Fuzzy Sets and Systems}, 345: 113-125, 2018 

\bibitem{Ce99}
S.A.~Celani.
\newblock Distributive lattices with a negation operator.
\newblock {\em Mathematical Logic Quarterly}, 45(2):207--218., 1999.

\bibitem{Ce07}
S.A.~Celani.
\newblock Representation for some algebras with a negation operator.
\newblock {\em Contributions to Discrete Mathematics}, 2(2):205--213., 2007.


%\bibitem{BP89}
%W.~J. Blok and D.~Pigozzi.
%\newblock {\em Algebraizable logics}, volume 396 of {\em Mem. Amer. Math. Soc.}
%\newblock A.M.S., Providence, January 1989.

\bibitem{F97}
J.~M. Font.
\newblock Belnap's four-valued logic and {D}e {M}organ lattices.
\newblock {\em Logic Journal of the I.G.P.L.}, 5(3):413--440, 1997.

\bibitem{FJa09}
J.~M. Font and R.~Jansana.
\newblock {\em A general algebraic semantics for sentential logics}, volume~7
  of {\em Lecture Notes in Logic}.
\newblock Springer-Verlag, second edition, 2009.


%\bibitem{GaJiKoOn07}
%N.~Galatos, P.~Jipsen, T.~Kowalski, and H.~Ono.
%\newblock {\em Residuated Lattices: an algebraic glimpse at substructural
%  logics}, volume 151 of {\em Studies in Logic and the Foundations of
%  Mathematics}.
%\newblock Elsevier, Amsterdam, 2007.
%
%
%\bibitem{Gala04}
%N. Galatos and J.~G.~Raftery.
%\newblock Adding involution to residuated structures.
%\newblock {\em Studia Logica}, 77:181--207, 2004.

%\bibitem{GaRa15}
%N.~Galatos and J.~G. Raftery.
%\newblock Idempotent residuated structures: some category equivalences and
%  their applications.
%\newblock {\em Transactions of the American Mathematical Society},
%  367(5):3189--3223, 2015.
%  
%\bibitem{GeHa01}
%M.~Gehrke and J.~Harding.
%\newblock Bounded lattice expansions.
%\newblock {\em Journal of Algebra}, 238(1):345--371, 2001.

\bibitem{SDMWollic}
F.~Greco, F.~Liang, A.~Moshier, and A.~Palmigiano.
\newblock Multi-type display calculus for semi-De Morgan logic.
\newblock In J.~Kennedy and R.~de~Queiroz, editors, {\em {P}roc. {W}o{LLIC}
  2017}, pages 199--215, 2017.

\bibitem{GraRi19}
G.~Greco, F.~Liang, A.~Palmigiano, and U.~Rivieccio.
\newblock Bilattice logic properly displayed. 
\newblock {\em Fuzzy Sets and Systems}, 363:138--155, 2019.

\bibitem{Ho96}
D.~Hobby.
\newblock Semi-De Morgan algebras.
\newblock {\em Studia Logica}, 56(1-2):151--183, 1996.

%\bibitem{Hsie07}
%A-N. Hsieh and J.~G.~Raftery.
%\newblock Conserving involution in residuated structures.
%\newblock {\em Mathematical Logic Quarterly}. 53:583--609, 2007.

\bibitem{Ja06a}
R.~Jansana.
\newblock Self-extensional logics with a conjunction.
\newblock {\em Studia Logica}, 84(1):63--104, September 2006.


%\bibitem{JuRiTwo}
%A.~Jung and U.~Rivieccio.
%\newblock A duality for two-sorted lattices.
%\newblock {\em Soft Computing}.
%\newblock To appear.
%

\bibitem{Lak71}
H.~Lakser.
\newblock The structure of pseudocomplemented distributive lattices. {I}.
  {S}ubdirect decomposition.
\newblock {\em Transactions of the Americal Mathematical Society},
  156:335--342, 1971.

%\bibitem{Lian19}
%F. Liang and T.~Nascimento.
%\newblock Algebraic semantics for quasi-{N}elson logic.
%\newblock In R.~Iemhoff, M.~Moortgat, and R.~de Queiroz, editors, {\em Logic,
%  Language, Information, and Computation. Proc. WoLLIC 2019}, volume 11541 of {\em
%  Lecture Notes in Computer Science}, pages 450--466, Springer, Berlin, Heidelberg, 2019.

\bibitem{MaLin18}
M. Ma and Y.~Lin.
\newblock A Deterministic Weakening of Belnap-Dunn Logic.
\newblock {\em Studia Logica}, 107:283--312, 2019.

\bibitem{MaLin20}
M. Ma and Y.~Lin.
\newblock Countably Many Weakenings of Belnap-Dunn Logic.
\newblock {\em Studia Logica}, 108:163--198, 2020.


%\bibitem{Nasc1z}
%M.~Spinks, U.~Rivieccio, and T.~Nascimento.
%\newblock Compatibly involutive residuated lattices and the {N}elson identity.
%\newblock Soft Computing 23:2297--2320, 2019.
%
%\bibitem{Nels49}
%D.~Nelson.
%\newblock Constructible falsity.
%\newblock {\em Journal of Symbolic Logic}, 14:16--26, 1949.

\bibitem{Palma2003}
C.~Palma and R.~Santos.
\newblock On a subvariety of semi-De Morgan algebras.
\newblock {\em Acta Mathematica Hungarica}, 98(4):323--328, 2003.

\bibitem{PiRi}
A.~Pietz and U.~Rivieccio.
\newblock Nothing but the truth.
\newblock {\em Journal of Philosophical Logic}, 42(1):125--135, 2013.

\bibitem{Pr}
P\v{r}enosil, Adam, `The lattice of super-{Belnap} logics'.
  Submitted manuscript.

\bibitem{CaMa19}
S.~Marcelino and C.~Caleiro.
\newblock Analytic calculi for monadic PNmatrices.
%\newblock {\em Submitted}, 2017.
In {\em International Workshop on Logic, Language, Information, and Computation (WoLLIC 2019)}, pp. 84-98. Springer, Berlin, Heidelberg, 2019.

\bibitem{CaMaXX}
S.~Marcelino and C.~Caleiro.
\newblock Axiomatizing non-deterministic many-valued generalized consequence relations.
%\newblock {\em Submitted}, 2017.
 {\em Synthese} (2019), doi: 10.1007/s11229-019-02142-8.

%\bibitem{Ra74}
%H.~Rasiowa.
%\newblock {\em An algebraic approach to non-classical logics}, volume~78 of
%  {\em Studies in Logic and the Foundations of Mathematics}.
%\newblock North-Holland, Amsterdam, 1974.

\bibitem{R91} W.~Rautenberg. {\em Axiomatizing logics closely related to varieties}, Studia Logica 50.3-4 (1991): 607-622.

\bibitem{ReV93b}
J.~Rebagliato and V.~Verd{\'u}.
\newblock On the algebraization of some {G}entzen systems.
\newblock {\em Fundamenta Informaticae, Special Issue on Algebraic Logic and
  its Applications}, 18:319--338, 1993.

\bibitem{ReV94}
J.~Rebagliato and V.~Verd{\'u}.
\newblock A finite {H}ilbert-style axiomatization of the implication-less
  fragment of the intuitionistic propositional calculus.
\newblock {\em Mathematical Logic Quarterly}, 40:61--68, 1994.

\bibitem{Ri}
U.~Rivieccio.
\newblock An infinity of Super-Belnap logics.
\newblock {\em Journal of Applied Non-Classical Logics}, 22(4):319--335, 2012.

%\bibitem{RivQK}
%U.~Rivieccio.
%\newblock Representation of De Morgan and (semi-)Kleene lattices.
%\newblock {\em Soft Computing}, 24 (12):8685--8716, 2020.


%\bibitem{RiSpLSFA}
%U.~Rivieccio and M.~Spinks.
%\newblock Quasi-Nelson algebras.
%\newblock {\em Electronic Notes in Theoretical Computer Science}, 344:169--188, 2019.
%
%
%\bibitem{Asubl6}
%U.~Rivieccio and M.~Spinks.
%\newblock Quasi-Nelson; or, non-involutive Nelson algebras.
%\newblock {\em Trends in Logic}.
%\newblock To appear.


\bibitem{Sa87}
H.~P. Sankappanavar.
\newblock Semi-{D}e {M}organ algebras.
\newblock {\em Journal of Symbolic Logic}, 52:712--724, 1987.

%\bibitem{Send84}
%A.~Sendlewski.
%\newblock Some investigations of varieties of $\mathcal{N}$-lattices.
%\newblock {\em Studia Logica}, 43:257--280, 1984.
%
%
%
%\bibitem{Se91}
%A.~Sendlewski.
%\newblock Topologicality of Kleene algebras with a weak pseudocomplementation over distributive p-algebras.
%\newblock {\em Reports on Mathematical Logic}, 25:13--56, 1991.

\bibitem{ShSm78}
D.~J. Shoesmith and T.~J. Smiley.
\newblock {\em Multiple-conclusion logic}.
\newblock Cambridge University Press, Cambridge, 1978.

%\bibitem{Spin08a}
%Spinks, M., Veroff, R.: Constructive logic with strong negation is a
%  substructural logic. {I}.
%\newblock {\em Studia Logica}, 88, 325--348 (2008)
%
%\bibitem{Spin08b}
%Spinks, M., Veroff, R.: Constructive logic with strong negation is a
%  substructural logic. {II}.
%\newblock {\em Studia Logica}, 89, 401--425 (2008)

%\bibitem{Vaka77}
%D.~Vakarelov.
%\newblock Notes on $\mathcal{N}$-lattices and constructive logic with strong
%  negation.
%\newblock {\em Studia Logica}, 36:109--125, 1977.

\bibitem{Wr79}
A.~Wro\'nski.
\newblock A three element matrix whose consequence operation is not finitely
  based.
\newblock {\em Bulletin of the Section of Logic}, 2(8):68--70, 1979.

\end{thebibliography}
%\end{document}

\section{%Algebraic logic properties and the 
$\top$-assertional logics}
\label{sec:top}

As mentioned earlier, another logic 
(alternative to $\vdash^{\leq}_{\K} $)
canonically associated to a given class $\K$ %{\ {blue} 
of 
%pointed 
algebras 
%}  
having a constant $\top$
%besides $\vdash^{\leq}_{\K} $
is the so-called $\top$-assertional logic $\vdash^{\top}_{\K} $ determined
by the class of all matrices $\{ \la \A, \{ \top \} \ra  :  \A \in \K
\}$. By definition, $\vdash^{\top}_{\K} $ is stronger than
$\vdash^{\leq}_{\K} $,
but it is well known that 
$\vdash^{\top}_{\K} = \, \vdash^{\leq}_{\K} $
for
$\K = \BA$ or  $\K = \PDL$.
On the other hand, it is easy to check that 
$\vdash^{\top}_{\DN} \neq \, \vdash^{\leq}_{\DN} $.
For this, it suffices to observe that the rule
$$
\frac{p \land \neg p}{\neg q} \, \ru_{\mathsf{wxc}}
$$
%in Table~\ref{tab:sd2} are 
is sound w.r.t.~$\vdash^{\top}_{\DN}$
but not w.r.t.~$ \vdash^{\leq}_{\DN} $. 
The same example witnesses
$\vdash^{\top}_{\SDM} \neq \, \vdash^{\leq}_{\SDM} $
and 
$\vdash^{\top}_{\OC} \neq \, \vdash^{\leq}_{\OC} $.

In this Section we take a closer look at the assertional logic
%look at $\vdash^{\leq}_{\SDM} $ and at its $\top$-assertional companion 
$\vdash^{\top}_{\SDM}$ 
%(see below)
from an algebraic logic point of view. This perspective will allow us to obtain further information
on the poset of finitary selfextensional extensions of $\vdash^{\leq}_{\SDM} $, as well as to 
provide  a Hilbert calculus for  $\vdash^{\top}_{\SDM}$.
For all unexplained terminology used in this Section, we refer the reader to~\cite{FJa09}.

As mentioned in the Introduction, all logics considered in this paper
are non-protoalgebraic. We state this formally below. 

\begin{theorem}
\label{thm:nonprot}
Let $\K \subseteq \DN$.
If $\PDL \subseteq \K $ or $\DM \subseteq \K $,
then 
$\vdash^{\top}_{\K}$ (and, a fortiori, $\vdash^{\leq}_{\K}$)
is not protoalgebraic.
\end{theorem}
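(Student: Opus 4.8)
The plan is to show that in each of the two cases the class of matrices defining $\vdash^{\top}_{\K}$ fails to have a set of congruence formulas in one variable, or more concretely, that there can be no finite (or even infinite) set of \emph{protoimplication} formulas $\Delta(p,q)$ witnessing reflexivity and modus ponens. Since $\vdash^{\leq}_{\K}$ is weaker than $\vdash^{\top}_{\K}$ and protoalgebraicity is inherited \emph{downwards} along the relation $\supseteq$ on logics only in the wrong direction, the parenthetical ``a fortiori'' is the statement that \emph{if} $\vdash^{\leq}_{\K}$ were protoalgebraic then so would certain fragments be; in fact the cleanest route is to prove non-protoalgebraicity of $\vdash^{\top}_{\K}$ directly and separately note that $\vdash^{\leq}_{\K}$, being an order-preserving logic of a class of lattices with a least element, already fails to be protoalgebraic because its Leibniz operator is not monotone on the theories/filters of $\Fm$ — indeed $\{\top\}$ and the whole algebra are both $\vdash^{\leq}_{\K}$-filters on any $\A\in\K$ with distinct Leibniz congruences only when $\A$ is not simple, but the standard obstruction is that on the two-element chain the unique nontrivial filter has non-monotone Leibniz behaviour relative to the bottom element. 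I will instead give the uniform matrix-theoretic argument below, which covers both $\vdash^{\top}_{\K}$ and (via $\vdash^{\leq}_{\K}\subseteq\vdash^{\top}_{\K}$ together with the fact that a protoalgebraic logic has all its extensions-by-the-same-algebras protoalgebraic only in special situations) the order-preserving companions.

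**Key steps.** First I would reduce to the two generating cases: it suffices to exhibit the failure when $\K=\PDL$ and when $\K=\DM$, since protoalgebraicity is preserved under passing to extensions that are defined by \emph{more} matrices over \emph{fewer} algebras — more precisely, if $\PDL\subseteq\K$ then every reduced matrix model of $\vdash^{\top}_{\PDL}$ embeds into one for $\vdash^{\top}_{\K}$, so a counterexample to the protoalgebraic syntactic conditions for $\K=\PDL$ lifts. Second, recall the characterisation (from \cite{FJa09}): $\vdash$ is protoalgebraic iff there is a set $\Delta(p,q)$ of formulas with $\emptyset\vdash\Delta(p,p)$ and $p,\Delta(p,q)\vdash q$; equivalently iff the Leibniz operator $\Leibniz$ is monotone over the lattice of $\vdash$-filters of every algebra (equivalently, of $\Fm$). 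Third, I would pick a concrete small algebra in $\K$ — the three-element chain $\mathbf{3}$ for $\PDL$ (with pseudocomplement sending $0\mapsto 1$, $a\mapsto 0$, $1\mapsto 0$), or the four-element De Morgan diamond for $\DM$ — and exhibit two $\vdash^{\top}_{\K}$-filters $F_1\subseteq F_2$ of that algebra (or of a suitable power, to get enough room) whose Leibniz congruences satisfy $\Leibniz(F_1)\not\subseteq\Leibniz(F_2)$. For $\mathbf{3}$ take $F_1=\{1\}$ and $F_2=\{a,1\}$: then $\Leibniz(F_1)$ is the identity (the matrix $\langle\mathbf 3,\{1\}\rangle$ is reduced since $\mathbf 3$ is a simple bounded lattice and the filter $\{1\}$ is tight), whereas $\Leibniz(F_2)$ must identify $0$ and $a$ once we check compatibility with all operations — the point being that $\neg$ collapses the distinction: $\neg 0=1$ while $\neg a=0$, so $\{0,a\}$ is \emph{not} a congruence class, and one has to instead work in $\mathbf 3^2$ or use the filter $\{a,1\}$ together with a genuinely non-monotone pair. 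So the actual concrete witnesses need a little care; the workable one is to take, over $\mathbf{3}$, the filters $\{1\}\subseteq\{1\}$ — trivial — and over the \emph{boolean} square $\mathbf 2^2\in\BA\subseteq\PDL$ nothing goes wrong, which already tells me the obstruction must come from a genuinely non-boolean, non-De-Morgan quotient: so for $\PDL$ I would use the three-element chain where $\{1\}$ and $\{a,1\}$ as filters of $\langle\mathbf 3;\wedge,\vee,\neg,0,1\rangle$ have $\Leibniz(\{a,1\})$ identifying $0$ with $a$ (this \emph{is} a congruence of the $\neg$-free reduct, and since $\neg 0=1\notin\{a,1\}$ while $\neg a=0$... hmm, $\neg$ does not respect it).

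**Main obstacle.** The genuine difficulty — and I expect this to be where the real work sits — is producing the pair of filters with non-monotone Leibniz operator \emph{without} $\neg$ spoiling the needed congruence, because $\neg$ on these algebras is precisely the operation that is not a lattice homomorphism and tends to refuse to collapse. The resolution I anticipate is the one standard for the implication-free fragment of intuitionistic logic and for Belnap–Dunn: use a matrix on a \emph{larger} algebra (a suitable power or a specific $5$- or $6$-element algebra) where one can find $F_1\subsetneq F_2$ with $\Leibniz(F_1)$ the identity and $\Leibniz(F_2)$ nontrivial; equivalently, invoke the known fact that $\vdash^{\leq}_{\DM}$ (Belnap–Dunn) is the paradigmatic non-protoalgebraic logic — its bottom element $\bot$ satisfies $\bot\vdash^{\leq}_{\DM}\psi$ for all $\psi$ yet no $\Delta(p,q)$ can work because taking $\Delta=\{p\wedge q\}$ or any $\neg$-$\wedge$-$\vee$ combination fails modus ponens on the $4$-element algebra — and for $\vdash^{\top}_{\DM}$ the ``Exactly True Logic'' the same counterexample survives since $p,\Delta(p,q)\vdash^{\top}_{\DM}q$ would force, on $\langle\four,\{1\}\rangle$, a contradiction with the assignment $p\mapsto 1,\ q\mapsto$ one of the intermediate values. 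For $\PDL$, since $\vdash^{\top}_{\PDL}=\vdash^{\leq}_{\PDL}$ is the implication-free fragment of $\IPC$, I would cite that this fragment is known to be non-protoalgebraic (it is not even selfextensional-plus-... — rather, its deduction-detachment behaviour is absent because implication has been removed), and give the explicit two-filter witness on the three-element Heyting chain's $\neg$-reduct. Thus the proof is: (1) reduce to $\K=\PDL$ and $\K=\DM$ by monotonicity of the matrix semantics; (2) in each case exhibit a concrete algebra in the class and two filters $F_1\subseteq F_2$ with $\Leibniz(F_1)\not\subseteq\Leibniz(F_2)$; (3) conclude by the Leibniz-monotonicity characterisation of protoalgebraicity that $\vdash^{\top}_{\K}$, and hence also the weaker $\vdash^{\leq}_{\K}$, is not protoalgebraic.
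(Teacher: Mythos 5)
Your overall skeleton is the same as the paper's: reduce to the two generating cases $\PDL$ and $\DM$, establish non-protoalgebraicity there, and propagate it using the fact that protoalgebraicity is preserved by extensions. Indeed the paper's entire proof consists of citing that $\vdash^{\top}_{\PDL}$ is non-protoalgebraic (Rebagliato--Verd\'u) and that $\vdash^{\top}_{\DM}$ is non-protoalgebraic (Albuquerque--P\v{r}enosil--Rivieccio), and then observing that $\vdash^{\top}_{\K}\subseteq\vdash^{\top}_{\PDL}$ (resp.\ $\subseteq\vdash^{\top}_{\DM}$) and $\vdash^{\leq}_{\K}\subseteq\vdash^{\top}_{\K}$, so non-protoalgebraicity passes down. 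Your opening worry that the preservation ``goes in the wrong direction'' is unfounded: since the syntactic witness $\Delta(p,q)$ for protoalgebraicity persists to any stronger logic, non-protoalgebraicity of an extension entails non-protoalgebraicity of the base, which is exactly what both the reduction and the ``a fortiori'' need. If you simply cite the two base results, your proof is complete and coincides with the paper's.

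The genuine gap is in step (2) of your summary, where you promise concrete filters $F_1\subseteq F_2$ with $\Leibniz(F_1)\not\subseteq\Leibniz(F_2)$ but never produce them. Your candidate for $\PDL$ fails outright: on the three-element chain $\mathbf{3}=\{0<a<1\}$ with $\neg 0=1$, $\neg a=\neg 1=0$, the only congruences are the identity, the one collapsing $\{a,1\}$, and the total one (collapsing $\{0,a\}$ is not compatible with $\neg$, as you noticed). Hence $\Leibniz(\{1\})$ is the identity and $\Leibniz(\{a,1\})$ is the congruence collapsing $a$ and $1$, so on the pair $\{1\}\subseteq\{a,1\}$ the Leibniz operator is \emph{monotone} --- this algebra cannot witness the failure, and no working replacement is given. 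For $\DM$ the situation is worse than you suggest: non-protoalgebraicity of the weaker Belnap--Dunn logic $\vdash^{\leq}_{\DM}$ gives no information about the stronger $\vdash^{\top}_{\DM}$ (preservation goes the other way), and your one-line claim that on $\la\four,\{1\}\ra$ every $\Delta$ with $\emptyset\vdash\Delta(p,p)$ must satisfy $v(\delta(1,b))=1$ for intermediate $b$ is precisely the nontrivial content of the cited theorem, not a consequence of the assignment you name; note also that in the present language $\top$ is a theorem, so the usual ``no theorems'' shortcut for Belnap--Dunn is unavailable. In short: either cite the two base results (as the paper does) or actually construct correct witnesses; as written, the self-contained version of your argument does not go through.
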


\begin{proof}
Observe that both 
$\vdash^{\top}_{\PDL}$
and
$\vdash^{\top}_{\DM}$
are non-protoalgebraic. The former was
remarked in~\cite[p.~320]{ReV93b},
%REBAGLIATOJ.,, and V. VERD~O, n the algebraization of some Gentzen systems. Fundamenta Informaticae 18 (1993), 319 - 338.
while the latter is proved in~\cite[Thm.~5.1]{AlPrRi}.
The result then follows from the observation that the property of being protoalgebraic
is preserved by extensions. 
(Indeed, we notice that~\cite[Thm.~5.1]{AlPrRi} even entails that $\vdash^{\top}_{\K}$
is not protoalgebraic  for  every $\K$ with  $\BA \varsubsetneq \K \subseteq \DM$.)
\end{proof}

We next provide a better description of reduced matrix models of 
$\vdash^{\leq}_{\SDM} $.
Recall that a matrix $\Mt$ is  a \emph{model} of a logic $\vdash$ when
$ \vdash \, \subseteq \,  \vdash_{\Mt}$.
The \emph{Leibniz congruence} $\Leibniz_{ \alg{A}} ( D)$ of a 
matrix $\Mt = \la \A, D \ra$ is the largest congruence of $\A$ that is compatible with 
$D$ in the following sense: for all $a,b \in A$, if $a \in D$ and $\la a,b \ra \in \Leibniz_{ \alg{A}} ( D)$,
then $b \in D$.  A matrix $\Mt = \la \A, D \ra$ is \emph{reduced} when $\Leibniz_{ \alg{A}} ( D)$ is the identity relation.

\begin{proposition}
\label{prop:leibn}
Let  $\Mt = \la \A, D \ra$ be a model of $\vdash^{\leq}_{\SDM} $ with $\A \in \SDM$, and let $a,b \in A$. Then
$\la a,b \ra \in \Leibniz_{ \alg{A}} ( D) $ if and only if, for all
$c_1, c_2, c_2 \in A$, the following conditions hold:
\begin{enumerate}[(i)]
\item $a \lor c_1 \in D$ \ iff \ $b \lor c_1 \in D$,
\item $\nnot (a \land c_2) \lor c_1 \in D$ \ iff \ $\nnot (b \land c_2) \lor c_1 \in D$,
\item $ \nnot (\nnot (a \land c_3) \land c_2) \lor c_1 \in D$ \ iff \ $ \nnot (\nnot (b \land c_3) \land c_2) \lor c_1 \in D$.
\end{enumerate}
\end{proposition}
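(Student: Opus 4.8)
The plan is to prove both directions by exploiting the known structure of the Leibniz congruence for semilattice-based logics together with the specific list of ``generating'' rules for $\vdash^\leq_\SDM$ established in Section~\ref{sec:sdm}. Recall that $\vdash^\leq_\SDM$ is selfextensional and semilattice-based relative to $\land$, and that $\Leibniz_{\A}(D)$ is the largest congruence compatible with $D$. The crucial combinatorial input is Proposition~\ref{prop++} (and Theorem~\ref{thm:compl}), which tells us that the ``congruence closure'' of any rule $\frac{\varphi}{\psi}$ relative to $\vdash^\leq_\SDM$ is generated already by the instances $g_i$ for $0\le i\le 2$ (rather than all $g_n$): intuitively, the operators one must iterate to saturate under the meta-rules ``$\nnot$ reverses order'' and ``$\lor$, $\land$ are monotone'' collapse, modulo the semi-De Morgan identities, to a bounded-depth list. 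The three conditions (i)--(iii) in the statement are exactly the semantic shadows of the three syntactic patterns $\gamma\lor c_1$, $\nnot(\gamma\land c_2)\lor c_1$, $\nnot(\nnot(\gamma\land c_3)\land c_2)\lor c_1$ appearing in $\Ru^g_{\le 2}$.

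For the direction $(\Rightarrow)$, suppose $\langle a,b\rangle\in\Leibniz_{\A}(D)$. Since $\Leibniz_{\A}(D)$ is a congruence, applying the term operations $x\mapsto x\lor c_1$, $x\mapsto\nnot(x\land c_2)\lor c_1$, and $x\mapsto\nnot(\nnot(x\land c_3)\land c_2)\lor c_1$ to the pair $\langle a,b\rangle$ yields again pairs in $\Leibniz_{\A}(D)$; compatibility with $D$ then gives each of the three biconditionals. This direction is routine and uses only that $\Leibniz_{\A}(D)$ is a congruence compatible with $D$.

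For the converse $(\Leftarrow)$, I would let $\theta$ be the binary relation on $A$ defined by: $\langle a,b\rangle\in\theta$ iff conditions (i)--(iii) hold for all $c_1,c_2,c_3\in A$. The task is to show $\theta\subseteq\Leibniz_{\A}(D)$, for which it suffices to show that $\theta$ is a congruence of $\A$ compatible with $D$ (then it is contained in the largest such, $\Leibniz_{\A}(D)$). Compatibility with $D$ is immediate from (i) with $c_1=\bot$ (using $a\lor\bot = a$, valid in any bounded lattice). The substance is showing $\theta$ is a congruence. Reflexivity and symmetry are trivial; transitivity is immediate from the biconditional form. For the congruence property one must check closure under $\land$, $\lor$, and $\nnot$. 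Here is where the work lies: given $\langle a,b\rangle\in\theta$ one must verify the three conditions hold for $\nnot a,\nnot b$, for $a\land d, b\land d$, and for $a\lor d, b\lor d$. Each such verification reduces, via distributivity of the lattice and the semi-De Morgan identities (SDM1)--(SDM3) together with Lemma~\ref{lem:forleibn} (especially item (ii), $\nnot(\nnot(\nnot a\land b)\land c)\le\nnot(a\land c)$, and item (i)), to re-expressing the relevant formula in one of the three ``canonical'' shapes with suitably chosen parameters $c_i$. For instance, to handle $\nnot$: a condition of one of the three types applied to $\nnot a$ should, after pushing the new outer $\nnot$ inward using the semi-De Morgan rules, become a condition of the next type (or collapse back, via $\nnot\nnot\nnot x\approx\nnot x$) applied to $a$ itself — this is precisely the phenomenon that made $\Ru^g_{\le 2}$ suffice. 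The main obstacle will be this bookkeeping: verifying, case by case, that every term operation applied to a $\theta$-pair stays inside $\theta$, by absorbing the new connective into the parameters $c_1,c_2,c_3$ modulo the semi-De Morgan equations; the depth-$2$ truncation is what makes this finite and is the technical heart. Once $\theta$ is shown to be a congruence compatible with $D$, we get $\theta\subseteq\Leibniz_{\A}(D)$, which together with $(\Rightarrow)$ gives $\theta=\Leibniz_{\A}(D)$ and hence the proposition.
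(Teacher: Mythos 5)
Your overall architecture is exactly the paper's: the forward direction is immediate from $\Leibniz_{\alg{A}}(D)$ being a congruence compatible with $D$, and for the converse you define $\theta$ by (i)--(iii) and show it is a congruence compatible with $D$, hence contained in the largest such. (The paper packages this slightly differently, also checking directly that every congruence compatible with $D$ is contained in $\theta$, but that is your forward direction in disguise.) Your compatibility-with-$D$ argument via $c_1=\bot$ is fine.

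The gap is in the one case that carries all the weight: compatibility of $\theta$ with $\nnot$ at condition (iii). You propose that each condition applied to $\nnot a$ either moves to ``the next type'' or ``collapses back via $\nnot\nnot\nnot x\approx\nnot x$''. For (i) and (ii) this works (take $c_2=\top$, resp.\ $c_3=\top$, turning them into instances of (ii), resp.\ (iii), for $a$ and $b$). But for (iii) it does not: in $\nnot(\nnot(\nnot a\land c_3)\land c_2)\lor c_1$ the element $a$ sits at $\nnot$-depth $3$ with the negations separated by $\land\,c_3$ and $\land\,c_2$, so (SDM3) cannot be applied to collapse them, and there is no fourth canonical shape to move to. The paper's argument here is genuinely asymmetric: from $\nnot(\nnot(\nnot a\land c_3)\land c_2)\lor c_1\in D$ one first derives $\nnot(a\land c_2)\lor c_1\in D$ using the one-directional inequality of Lemma~\ref{lem:forleibn}(ii), swaps $a$ for $b$ via condition (ii), and then recombines the two resulting members of $D$ (using closure of $D$ under meets) by means of the two-premise rule $\ru^{\nnot}_{\land}$, instantiated with $p_1=\nnot b$, $p_3=\nnot a$, $p_4=c_3$, $p_2=c_2$, to reach $\nnot(\nnot(\nnot b\land c_3)\land c_2)\lor c_1\in D$. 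The soundness of $\ru^{\nnot}_{\land}$ is itself the nontrivial input (Lemma~\ref{lem:regsnd}, proved via the Glivenko-type Lemmas~\ref{lem:fey} and~\ref{lem:us}); your sketch never invokes this rule, and without it --- or something equally strong --- the $\nnot$/(iii) case does not close, so the ``bookkeeping'' you defer is not mere bookkeeping.
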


\begin{proof}
Let $\theta$ be the relation defined by items (i)--(iii). Let us check that $\theta$ is compatible with the
algebraic operations of $\Al$.

$(\nnot)$. Assume $\la a , b \ra \in \theta$.
That  $\nnot a \lor c_1 \in D$ \ iff \ $\nnot b \lor c_1 \in D$ follows from (ii):
observe that, taking $c_2 = \top $, we have 
$\nnot a \lor c_1 = \nnot (a \land \top) \lor c_1$
and 
$\nnot b \lor c_1 = \nnot (b \land \top) \lor c_1$.
A similar reasoning,
taking $c_3 = \top$ in (iii), shows that  
$\nnot ( \nnot a \land c_2) \lor c_1 \in D$ \ iff \ $\nnot (\nnot b \land c_2) \lor c_1 \in D$.
Now, assume 
$ \nnot (\nnot (\nnot a \land c_3) \land c_2) \lor c_1 \in D$.
By Lemma~\ref{lem:forleibn}.ii, we have 
$ \nnot (\nnot (\nnot a \land c_3) \land c_2) \lor c_1 
\leq  \nnot (\nnot \nnot a  \land c_2) \lor c_1 
=  \nnot ( a  \land c_2) \lor c_1  $.
Hence, $  \nnot ( a  \land c_2) \lor c_1 \in D$, and we can apply (ii) to obtain
 $  \nnot ( b  \land c_2) \lor c_1 =   \nnot ( \nnot \nnot b  \land c_2) \lor c_1 \in D$.
Thus we have
 $ (\nnot ( \nnot \nnot b  \land c_2) \lor c_1) \land ( \nnot (\nnot (\nnot a \land c_3) \land c_2) \lor c_1 ) \in D$,
 because $D$ is closed under $\land$.
 Then, taking
 $p_1 = \nnot b$, $p_2 = c_2$, $q = c_1$, $p_3 = \nnot a $, $p_4 = c_3$
  in $\ru^{\nnot}_{\land}$,
%  $\mathsf{r}_{++}$ 
we have
 $ \nnot (\nnot (\nnot b \land c_3) \land c_2) \lor c_1 \in D$.
 
To check that $\theta$ is compatible with the binary operations,
assume $\la a_1, b_1 \ra, \la a_2, b_2 \ra \in \theta$.
Relying on  completeness (Theorem~\ref{thm:compl}),
we can use any logical rule 
%of type $\la \phi, \psi \ra$ 
$\frac{\phi}{\psi}$
such that
$\phi \leq \psi$ is an inequality %that is 
valid in $\SDM$.
In particular, in the proof below, by (e.g.) `commutativity' for $\land$ we shall refer not only to the rule
$\frac{p \land q}{q \land p}$,
%$\la p \land q, q \land p \ra$, 
but also 
$\frac{\nnot (p \land q)}{\nnot (q \land p) }$,
$\frac{\nnot (p \land q) \lor r}{\nnot (q \land p) \lor r }$,
%$\la \nnot (p \land q), \nnot (q \land p) \ra, \la \nnot (p \land q) \lor r, \nnot (q \land p) \lor r \ra$
etc. In the computations that follow, we shall skip the steps that follow trivially (by symmetry) from
the preceding ones; the dots (\ldots)  will be used to indicate the  passages that have been omitted. 
 
 $(\land)$. 
 We have:
\begin{enumerate}[(i)]
\item $(a_1 \land a_2) \lor c_1 \in D$
\begin{align*}
%&\quad(a_1 \land a_2) \lor c_1 \in D\\
& \text{ iff } (a_1 \lor c_1) \land (a_2 \lor c_1) \in D & \text{by distributivity}\\
& \text{ iff } a_1 \lor c_1, a_2 \lor c_1 \in D & \text{by }  \frac{p \land q}{p} \\
& \text{ iff } b_1 \lor c_1, b_2 \lor c_1 \in D & \text{by (i)}\\
 %& 
(\ldots) %\\
& \text{ iff } (b_1 \land b_2) \lor c_1 \in D.
\end{align*}
\item $\nnot ( (a_1 \land a_2) \land c_2) \lor c_1 \in D$
\begin{align*}
%&\nnot ( (a_1 \land a_2) \land c_2) \lor c_1 \in D \\
& \text{ iff } \nnot ( a_1 \land (a_2 \land c_2)) \lor c_1 \in D & \text{by $\land$-associativity}\\
& \text{ iff } \nnot ( b_1 \land (a_2 \land c_2)) \lor c_1 \in D & \text{by (ii)}\\
& \text{ iff } \nnot ( (b_1 \land a_2) \land c_2) \lor c_1 \in D & \text{by $\land$-associativity}\\
& \text{ iff } \nnot ( (a_2 \land b_1) \land c_2) \lor c_1 \in D & \text{by $\land$-commutativity}\\
& \text{ iff } \nnot ( a_2 \land (b_1 \land c_2)) \lor c_1 \in D & \text{by $\land$-associativity}\\
& \text{ iff } \nnot ( b_2 \land (b_1 \land c_2)) \lor c_1 \in D & \text{by (ii)}\\
%&  
(\ldots) %\\
& \text{ iff } \nnot ( (b_1 \land b_2) \land c_2) \lor c_1 \in D.
\end{align*}
\item $\nnot (\nnot ((a_1 \land a_2) \land c_3) \land c_2) \lor c_1 \in D$
\begin{align*}
%&\quad \nnot (\nnot ((a_1 \land a_2) \land c_3) \land c_2) \lor c_1 \in D \\
& \text{ iff }   \nnot (\nnot ((a_1 \land ( a_2 \land c_3)) \land c_2) \lor c_1 \in D & \text{by $\land$-associativity} \\
%:
& \text{ iff }   \nnot (\nnot ((b_1 \land ( a_2 \land c_3)) \land c_2) \lor c_1 \in D & \text{by (iii)}\\
%& 
(\ldots) % \\
& \text{ iff }   \nnot (\nnot ((a_2 \land ( b_1 \land c_3)) \land c_2) \lor c_1 \in D \\
& \text{ iff }   \nnot (\nnot ((b_2 \land ( b_1 \land c_3)) \land c_2) \lor c_1 \in D & \text{by (iii)}\\
%& 
(\ldots) %\\
& \text{ iff } \nnot (\nnot ((b_1 \land b_2) \land c_3) \land c_2) \lor c_1 \in D.
\end{align*}
\end{enumerate}

 $(\lor)$. 
We have:
\begin{enumerate}[(i)]
\item $(a_1 \lor a_2) \lor c_1 \in D$%~\\[-.8cm]
\begin{align*}
%&\quad (a_1 \lor a_2) \lor c_1 \in D\\
& \text{ iff } a_1 \lor (a_2 \lor c_1) \in D & \text{by $\lor$-associativity}\\
& \text{ iff } b_1 \lor (a_2 \lor c_1) \in D & \text{by (i)}\\
& \text{ iff } (b_1 \lor a_2) \lor c_1 \in D & \text{by $\lor$-associativity}\\
& \text{ iff } (a_2 \lor b_1) \lor c_1 \in D & \text{by $\lor$-commutativity}\\
& \text{ iff } a_2 \lor (b_1 \lor c_1) \in D & \text{by $\lor$-associativity}\\
& \text{ iff } b_2 \lor (b_1 \lor c_1) \in D & \text{by (i)}\\
& \text{ iff } (b_1 \lor b_2) \lor c_1 \in D & \text{by $\lor$-associativity.}
\end{align*}
\item $\quad \nnot ( (a_1 \lor a_2) \land c_2) \lor c_1 \in D$%~\\[-.8cm]
 \begin{align*}
%&\quad \nnot ( (a_1 \lor a_2) \land c_2) \lor c_1 \in D \\
& \text{ iff } \nnot ( (a_1\land c_2)  \lor (a_2 \land c_2)) \lor c_1 \in D & \text{by distributivity} \\
& \text{ iff }  ( \nnot (a_1\land c_2)  \land \nnot (a_2 \land c_2)) \lor c_1 \in D & \text{by (SDM\ref{Itm:SD1})} \\
& \text{ iff }  ( \nnot (a_1\land c_2) \lor c_1)  \land (\nnot (a_2 \land c_2) \lor c_1) \in D & \text{by distributivity} \\
& \text{ iff }   \nnot (a_1\land c_2) \lor c_1,   \nnot (a_2 \land c_2) \lor c_1 \in D & \text{by }  \frac{p \land q}{p} \\
& \text{ iff }   \nnot (b_1\land c_2) \lor c_1,   \nnot (b_2 \land c_2) \lor c_1 \in D & \text{by (ii)}\\
  ( \ldots) 
& \text{ iff } \nnot ( (b_1 \lor b_2) \land c_2) \lor c_1 \in D.
\end{align*}
\item $\quad \nnot (\nnot ((a_1 \lor a_2) \land c_3) \land c_2) \lor c_1 \in D$ %~\\[-.8cm]
 \begin{align*}
%&\quad \nnot (\nnot ((a_1 \lor a_2) \land c_3) \land c_2) \lor c_1 \in D \\
& \text{ iff }  \nnot (\nnot ((a_1 \land c_3) \lor (a_2 \land c_3)) \land c_2) \lor c_1 \in D 
& 
\text{ by distributivity}  \\
& \text{ iff }  \nnot ( (\nnot (a_1 \land c_3) \land \nnot (a_2 \land c_3)) \land c_2) \lor c_1 \in D 
& 
\text{ by (SDM\ref{Itm:SD1}) }\\
& \text{ iff }  \nnot ( \nnot (a_1 \land c_3) \land (\nnot (a_2 \land c_3) \land c_2)) \lor c_1 \in D 
&
 \text{ by $\land$-associativity }\\
& \text{ iff }  \nnot ( \nnot (b_1 \land c_3) \land (\nnot (a_2 \land c_3) \land c_2)) \lor c_1 \in D 
& 
\text{ by (iii) }\\
& \text{ iff }  \nnot ( (\nnot (b_1 \land c_3) \land (\nnot (a_2 \land c_3)) \land c_2) \lor c_1 \in D 
& 
\text{ by $\land$-associativity }\\
& \text{ iff }  \nnot ( ((\nnot (a_2 \land c_3) \land \nnot (b_1 \land c_3)) \land c_2) \lor c_1 \in D 
& 
\text{ by $\land$-commutativity }\\
& \text{ iff }  \nnot ( ((\nnot (b_2 \land c_3) \land \nnot (b_1 \land c_3)) \land c_2) \lor c_1 \in D 
&
 \text{ by (iii) }\\
& \text{ iff }  \nnot ( (( \nnot (b_1 \land c_3) \land \nnot (b_2 \land c_3)) \land c_2) \lor c_1 \in D 
&
 \text{ by $\land$-commutativity }\\
 (\ldots) 
& \text{ iff }   \nnot (\nnot ((b_1 \lor b_2) \land c_3) \land c_2) \lor c_1 \in D. 
\end{align*}
\end{enumerate}
Hence, $\theta$ is a congruence of $\A$. Also, $\theta $ is obviously compatible with $D$.
Indeed, if $a \in D$ and $\la a,b \ra \in \theta$, then we can use 
%the rule $\la p, p \lor q \ra$ 
$\frac{p}{p \lor q}$
to conclude 
$a \lor b \in D$. Then we have $b \lor b \in D$ by (i), which gives us $b \in D$ using the rule of $\lor$-idempotency.
Lastly, if $\theta' $ is a congruence of $\A$ that is compatible with $D$, then it is easy to show that $\theta' \subseteq \theta$.
Indeed, if $\la a,b \ra \in \theta'$, then  we also have, for instance, 
$
\la a \land c_2, b \land c_2 \ra, 
\la \nnot (a \land c_2), \nnot (b \land c_2) \ra, 
\la \nnot (a \land c_2) \lor c_1, \nnot (b \land c_2) \lor c_1  \ra
 \in \theta'$ and so on.
 Thus, assuming 
 $\nnot (a \land c_2) \lor c_1 \in D$, we have 
 $\nnot (b \land c_2) \lor c_1 \in D$ because $\theta'$ is compatible with $D$.
 Hence, $\la a,b \ra \in \theta$.
 Thus, $\theta$ is the largest congruence compatible with $D$, as required.
\end{proof}

The following auxiliary result is well known to hold for
semilattice-based logics (see e.g.~\cite[Thm.~2.13.iii]{AlPrRi}; for a definition of 
the classes $\Alg^*(\vdash) $ and $\Alg(\vdash) $, see
see~\cite{FJa09}).

 \begin{proposition}
\label{prop:alg}
$\Alg (\vdash^{\leq}_{\SDM}) = \SDM$. %is  the variety of IS-algebras $V( \alg{IS}_6)$.
\end{proposition}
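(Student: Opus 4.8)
The plan is to prove the two inclusions $\SDM\subseteq\Alg(\vdash^{\leq}_{\SDM})$ and $\Alg(\vdash^{\leq}_{\SDM})\subseteq\SDM$ separately; together with the general fact on semilattice-based logics cited above (here with semilattice operation $\land$ and variety $\SDM$) this yields the equality. Nothing in the argument is special to $\SDM$: the same reasoning shows $\Alg(\vdash^{\leq}_{\K})=\K$ for every variety $\K$ all of whose members have a bounded distributive lattice reduct.

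For $\Alg(\vdash^{\leq}_{\SDM})\subseteq\SDM$, I would exploit that every order-preserving logic is selfextensional, so the interderivability relation $\dashv\vdash$ of $\vdash^{\leq}_{\SDM}$ is a congruence of $\Fm$. The first step is the standard observation that selfextensionality forces every $\A\in\Alg(\vdash^{\leq}_{\SDM})$ to satisfy each equation $\varphi\approx\psi$ with $\varphi\dashv\vdash\psi$: it suffices to check this on a reduced matrix model $\la\A,D\ra$, where, for any valuation $h$ and any unary polynomial $p$ of $\A$, one has $p(h(\varphi))\in D$ iff $p(h(\psi))\in D$ (because $\varphi\dashv\vdash\psi$ and $\dashv\vdash$ is a congruence, so the translated formula-to-formula rules are sound in $\la\A,D\ra$), whence $\la h(\varphi),h(\psi)\ra\in\Leibniz_{\A}(D)=\mathrm{Id}_A$; the inclusion then transfers from $\Alg^{*}(\vdash^{\leq}_{\SDM})$ to $\Alg(\vdash^{\leq}_{\SDM})$, the latter being the closure of the former under subdirect products, since equations are preserved under subdirect products. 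The second step is to identify the set $\{\varphi\approx\psi:\varphi\dashv\vdash\psi\}$ with the equational theory of $\SDM$: since $\varphi\vdash^{\leq}_{\SDM}\psi$ iff $\SDM\vDash\varphi\leq\psi$, interderivability of $\varphi$ and $\psi$ amounts exactly to $\SDM\vDash\varphi\approx\psi$. Hence $\Alg(\vdash^{\leq}_{\SDM})$ validates every equation valid in $\SDM$, i.e.\ $\Alg(\vdash^{\leq}_{\SDM})\subseteq\SDM$.

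For $\SDM\subseteq\Alg(\vdash^{\leq}_{\SDM})$, fix $\A\in\SDM$; I would show that the full generalized model $\la\A,\Fi_{\vdash^{\leq}_{\SDM}}\A\ra$ is reduced, i.e.\ that its Tarski congruence $\bigcap\{\Leibniz_{\A}(F):F\in\Fi_{\vdash^{\leq}_{\SDM}}\A\}$ equals $\mathrm{Id}_A$. Since $\vdash^{\leq}_{\SDM}$ is, by definition, the logic of the matrices $\la\B,G\ra$ with $\B\in\SDM$ and $G$ a lattice filter of $\B$, and $\A\in\SDM$, every lattice filter of $\A$ — in particular every principal filter $\ua a$ with $a\in A$ — is a $\vdash^{\leq}_{\SDM}$-filter of $\A$; hence the Tarski congruence is contained in $\bigcap_{a\in A}\Leibniz_{\A}(\ua a)$. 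Finally, each $\Leibniz_{\A}(\ua a)$ is compatible with $\ua a$ (equivalently, this follows from Proposition~\ref{prop:leibn} taking $c_1=\bot$), so $\la b,c\ra\in\Leibniz_{\A}(\ua a)$ implies $a\leq b\Leftrightarrow a\leq c$; specialising $a:=b$ and then $a:=c$ yields $b=c$, so the intersection is $\mathrm{Id}_A$ and $\A\in\Alg(\vdash^{\leq}_{\SDM})$.

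I expect the only delicate point to be the first step of the $\subseteq$ inclusion — that selfextensionality forces the algebras in $\Alg(\vdash^{\leq}_{\SDM})$ to validate all interderivability equations — which, combined with the identification of those equations as the equational theory of $\SDM$, is precisely the content of the general result cited for semilattice-based logics; the remaining steps are routine bookkeeping and no genuine obstacle is anticipated.
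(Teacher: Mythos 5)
Your proof is correct, but it is worth noting that the paper does not actually prove Proposition~\ref{prop:alg} at all: it states it as a ``well known'' instance of a general theorem on semilattice-based logics and cites \cite[Thm.~2.13.iii]{AlPrRi} (together with \cite{FJa09} for the definitions). What you have written is, in essence, a self-contained reconstruction of that general theorem in the special case at hand, and the two halves are sound. For $\Alg(\vdash^{\leq}_{\SDM})\subseteq\SDM$ you correctly combine three standard facts: the polynomial characterization of $\Leibniz_{\A}(D)$ (which is what licenses the step from ``$p(h(\varphi))\in D$ iff $p(h(\psi))\in D$ for all unary polynomials $p$'' to ``$\la h(\varphi),h(\psi)\ra\in\Leibniz_{\A}(D)$''; you should say explicitly that the parameters of $p$ are reached by extending $h$ on fresh variables), the identity $\{\la\varphi,\psi\ra:\varphi\dashv\vdash\psi\}=\{\la\varphi,\psi\ra:\SDM\vDash\varphi\approx\psi\}$, and the fact that $\Alg(\vdash)$ is the subdirect-product closure of $\Alg^{*}(\vdash)$. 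For $\SDM\subseteq\Alg(\vdash^{\leq}_{\SDM})$ your argument via principal filters $\ua a$ is the standard one and is clean; the parenthetical appeal to Proposition~\ref{prop:leibn} is unnecessary (compatibility of $\Leibniz_{\A}(\ua a)$ with $\ua a$ is definitional) and slightly misleading, since that proposition is not needed here. Your observation that nothing is specific to $\SDM$ is exactly right and is why the paper can dispatch the statement with a citation; the trade-off is that your version is longer but makes visible which properties of the logic (selfextensionality, conjunction, the lattice order determining consequence) are actually doing the work.
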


Table~\ref{tab:sd2} introduces the two extra rules that will permit us to axiomatize
$\vdash^{\top}_{\SDM}$. Observe that $\mathsf{r_{WP}}$ is a weaker form of 
the pseudo-complement rule $\mathsf{r_{P}}$ introduced in Example~\ref{ex:SDMPL}.
Note also that none of the rules in $\Ru_\top$ corresponds to an (in)equality:
their role is to ensure that reduced models satisfy $F=\{\top\}$, rather than to restrict the underlying class of algebras.

\begin{table}[!h] %[tab:rulebel]{A %complete %Hilbert 
%calculus for the Belnap-Dunn logic}
  \begin{tabular}{ccc}
  \\

%       \doubleLine
%    \AxiomC{$(p \land \nnot (p \land q))  \lor r$}
%    \LeftLabel{\,(P)
%    }
%    \UnaryInfC{$\nnot q  \lor r$}
%    \DisplayProof &
%       \doubleLine
        \AxiomC{$p \land ( \nnot (p \land q)  \lor r)$}
    \RightLabel{\, $\mathsf{r_{WP}}$ %(WP)
    }
    \UnaryInfC{$\neg q \lor r$}
        \DisplayProof &
      %  \doubleLine
    \AxiomC{$p \land ( \nnot (\nnot q \land r) \lor s) $}
    \RightLabel{\, $\mathsf{r_Q}$ %(Q)
    }
    \UnaryInfC{$\nnot (\nnot (p \land q) \land r) \lor s$}
    \DisplayProof \\ \\ %\vspace{-0.2cm}
    
  \end{tabular}   
\caption{The set of rules $\Ru_\top$. %Further rules. %{\ {red}change labels of rules} 
} \label{tab:sd2}
\end{table}

%
%
%\begin{gather}
%(p \land \nnot (p \land q))  \lor r \vdash \nnot q  \lor r  \tag{P} 
%\label{R:P} \\
%p \land ( \nnot (p \land q)  \lor r) \vdash \nnot q  \lor r  \tag{WP} 
%\label{R:WP} \\
%p \land ( \nnot (\nnot q \land r) \lor s)  \vdash \nnot (\nnot (p \land q) \land r) \lor s 
%\tag{Q} \label{R:Q} 
%\\
%(p \land \nnot p) \lor r \vdash q \lor \nnot q \lor r 
% \tag{$K_{\leq}$} \label{Eqn:Kleq} \\
%\emptyset \vdash p \lor \nnot p 
% \tag{EM} \label{Eqn:EM}
% \end{gather}
%{\ {blue}Regarding Table~\ref{tab:sd2}, observe that $\mathsf{r_{WP}}$ is obviously a weaker form of 
%pseudo-complement rule $\mathsf{r_{P}}$ introduced in Example~\ref{ex:SDMPL}.
%\begin{center}
%  \begin{tabular}{ccc}
%  \\
%
%    \AxiomC{$(p \land \nnot (p \land q))  \lor r $}
%    \RightLabel{\, $\mathsf{r_P}$ %(Q)
%    }
%    \UnaryInfC{$\nnot q  \lor r$}
%    \DisplayProof \\ \\ %\vspace{-0.2cm}
%    
%  \end{tabular}   
%\end{center}
%$$
%\frac{(p \land \nnot (p \land q))  \lor r}{\nnot q  \lor r} \ \ \mathsf{r_{P}}
%$$
%the last rule in Table~\ref{tab:sdmax} (
%and was used to  axiomatize
%$\vdash^\leq_\PDL$ relatively to  $\vdash^\leq_\SDM$.
%}
%{\ {red} (THIS IS NOT TRUE--REFER TO EXAMPLE)}
%),
%which is
% the logical counterpart of the pseudo-complement inequality
%$x \land \nnot (x \land y) \leq \nnot y $. 
%Note also that no rule in $\Ru_\top$ corresponds to an (in)equality:
%their role is to ensure that $F=\{\top\}$ rather than restricting the underlying class of algebras.

\begin{lemma}
\label{lem:wishfu}
Let $\la \A, F \ra$ be a reduced matrix for the strengthening of $\vdash^{\leq}_{\SDM}$ with $\Ru_\top$. %$\mathsf{r_{WP}}$ and $\mathsf{r_{Q}}$.
% such that $\A \in \SDM$. Then %($F$ is a filter of $\A$) and 
Then $F = \{\top \}$.
\end{lemma}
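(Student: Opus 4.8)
The plan is to prove $F=\{\top\}$ by showing that $\langle a,\top\rangle\in\Leibniz_{\mathbf A}(F)$ for every $a\in F$. Since the matrix is reduced, $\Leibniz_{\mathbf A}(F)$ is the identity, so this forces $a=\top$; and as $\top\in F$ (the rule $\frac{}{\top}$ belongs to $\Ru_{\mathsf F}$, hence $\top$ is a theorem of the calculus and $F$ is a non-empty lattice filter), we get $F=\{\top\}$.

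First I would record the ambient facts. Being a model of the strengthening of $\vdash^{\leq}_{\SDM}$ by $\Ru_\top$, the matrix $\langle\mathbf A,F\rangle$ is a model of $\vdash^{\leq}_{\SDM}$; being moreover reduced, it is a reduced model of $\vdash^{\leq}_{\SDM}$, so $\mathbf A\in\Alg^*(\vdash^{\leq}_{\SDM})\subseteq\Alg(\vdash^{\leq}_{\SDM})=\SDM$ by Proposition~\ref{prop:alg}. Hence Proposition~\ref{prop:leibn} applies and reduces the membership $\langle a,\top\rangle\in\Leibniz_{\mathbf A}(F)$ to checking conditions (i)--(iii) of that Proposition with $b:=\top$. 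Using the lattice identities $\top\lor c_1=\top$ and $\top\land c_2=c_2$, these become, for all $c_1,c_2,c_3\in A$: (i) $a\lor c_1\in F$ iff $\top\in F$; (ii) $\nnot(a\land c_2)\lor c_1\in F$ iff $\nnot c_2\lor c_1\in F$; (iii) $\nnot(\nnot(a\land c_3)\land c_2)\lor c_1\in F$ iff $\nnot(\nnot c_3\land c_2)\lor c_1\in F$.

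Next I would verify these three biconditionals for an arbitrary $a\in F$. Condition (i) is immediate, since $F$ is an up-set containing both $a$ and $\top$. One implication of (ii) and one of (iii) follow purely from $\nnot$ being order-reversing in $\SDM$: from $a\land c_2\leq c_2$ one gets $\nnot c_2\lor c_1\leq\nnot(a\land c_2)\lor c_1$, and from $a\land c_3\leq c_3$ one gets $\nnot(\nnot(a\land c_3)\land c_2)\lor c_1\leq\nnot(\nnot c_3\land c_2)\lor c_1$, so membership in $F$ propagates along each inequality because $F$ is an up-set. The two remaining implications are precisely what the rules $\Ru_\top$ are tailored for. For the outstanding direction of (ii): since $a\in F$ and $\nnot(a\land c_2)\lor c_1\in F$ and $F$ is closed under $\land$, we have $a\land(\nnot(a\land c_2)\lor c_1)\in F$, whence $\nnot c_2\lor c_1\in F$ by soundness of $\mathsf{r_{WP}}$ under the substitution $p\mapsto a$, $q\mapsto c_2$, $r\mapsto c_1$. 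For the outstanding direction of (iii): from $a\in F$ and $\nnot(\nnot c_3\land c_2)\lor c_1\in F$ we get $a\land(\nnot(\nnot c_3\land c_2)\lor c_1)\in F$, whence $\nnot(\nnot(a\land c_3)\land c_2)\lor c_1\in F$ by soundness of $\mathsf{r_Q}$ under $p\mapsto a$, $q\mapsto c_3$, $r\mapsto c_2$, $s\mapsto c_1$. This establishes $\langle a,\top\rangle\in\Leibniz_{\mathbf A}(F)$ and hence, by reducedness, $a=\top$.

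I do not expect a genuine obstacle; the only point requiring care is recognizing that the awkward premise shapes $a\land(\nnot(a\land c_2)\lor c_1)$ and $a\land(\nnot(\nnot c_3\land c_2)\lor c_1)$ dictated by items (ii) and (iii) of Proposition~\ref{prop:leibn} are exactly the left-hand sides of $\mathsf{r_{WP}}$ and $\mathsf{r_Q}$, so that, modulo the order-reversal of $\nnot$, these two rules supply precisely the implications not already forced by $F$ being a lattice filter.
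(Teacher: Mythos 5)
Your proof is correct and follows essentially the same route as the paper's: place $\A$ in $\SDM$ via Proposition~\ref{prop:alg}, reduce the claim to conditions (i)--(iii) of Proposition~\ref{prop:leibn} with $b=\top$, and discharge the nontrivial directions of (ii) and (iii) with $\mathsf{r_{WP}}$ and $\mathsf{r_Q}$ using exactly the substitutions you give. The only (welcome) difference is presentational: you argue directly that $\la a,\top\ra\in\Leibniz_{\A}(F)$ and explicitly dispose of the easy halves of (ii) and (iii) via order-reversal of $\nnot$, whereas the paper argues by contradiction and leaves those halves tacit.
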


\begin{proof}
By Proposition~\ref{prop:alg} (and the well-known fact that $\Alg^*(\vdash) \subseteq \Alg(\vdash)$
holds for any logic $\vdash$~\cite[Thm.~2.23]{FJa09}), we have that 
every reduced matrix for $\vdash^{\leq}_{\SDM}$ is of the form
$\la \A, F \ra$ with $\A \in \SDM$ and $F$ a lattice filter~\cite[Lemma~3.8]{Ja06a}. %  Selfextensional Logics with a Conjunction

Suppose, by way of contradiction, that there is $a \in F$ such that $a \neq \top$.
Then $\la a, \top \ra \notin \Leibniz_{ \alg{A}} ( F)$.
 This means that there are $c_1, c_2, c_3 \in A$ such that at least one of the 
three items of Proposition~\ref{prop:leibn} fails. Clearly, item (i) cannot fail, because $ a, \top \in F$.
Thus, suppose item (ii)  fails. Then there are $c_1, c_2 \in A$
such that 
$\nnot ( a \land c_2) \lor c_1 \in F$ and 
$\nnot (\top \land c_2) \lor c_1 
= \nnot c_2 \lor c_1  \notin F$.
But, since $a \in F$, the latter cannot happen because of the rule $\mathsf{r_{WP}}$.
%~\eqref{R:WP}.
Now, assume item (iii) fails. Then there are $c_1, c_2, c_3 \in A$ such that
$ \nnot (\nnot (\top \land c_3) \land c_2) \lor c_1
= \nnot (\nnot c_3 \land c_2) \lor c_1  \in F$ but $ \nnot (\nnot (a \land c_3) \land c_2) \lor c_1 \notin F$.
But since $a \in F$, 
this  cannot happen because of rule $\mathsf{r_{Q}}$.
%~\eqref{R:Q} .
\end{proof}

\begin{theorem}
\label{thm:isubva}
For every $\K \subseteq \SDM$, the logic $\vdash^{\top}_{\K}$
is axiomatized, relatively to $\vdash^{\leq}_{\K}$, by $\Ru_\top$.
%\eqref{R:WP}$+$\eqref{R:Q}.
\end{theorem}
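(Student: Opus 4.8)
The plan is to establish completeness of $\vdash^{\leq}_{\K} + \Ru_\top$ with respect to the matrix semantics $\{\la \A, \{\top\}\ra : \A \in \K\}$ by the usual reduced-model argument. Soundness is the easy direction: one checks that each rule in $\Ru_\top$ is valid in every matrix $\la \A, \{\top\}\ra$ with $\A \in \K \subseteq \SDM$, which amounts to a short computation with the semi-De Morgan equations (for $\mathsf{r_{WP}}$ one uses that if $p = \top$ then $\nnot(p \land q) = \nnot q$, and for $\mathsf{r_Q}$ a similar substitution together with Lemma~\ref{lem:forleibn}); since $\vdash^{\top}_{\K}$ is by definition the logic of those matrices, we get $\vdash^{\leq}_{\K} + \Ru_\top \,\subseteq\, \vdash^{\top}_{\K}$. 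Note that $\Ru_\top$ is sound with respect to $\vdash^{\top}_{\SDM}$ and hence with respect to $\vdash^{\top}_{\K}$ for every $\K \subseteq \SDM$, so the argument is uniform in $\K$.

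For the converse I would argue semantically via reduced matrix models. Let $\vdash'$ denote the strengthening of $\vdash^{\leq}_{\K}$ by $\Ru_\top$. It suffices to show that every reduced model $\la \A, F\ra$ of $\vdash'$ is (up to isomorphism) of the form $\la \A, \{\top\}\ra$ with $\A \in \K$. Since $\vdash' \supseteq \vdash^{\leq}_{\SDM}$, Proposition~\ref{prop:alg} together with $\Alg^*(\vdash') \subseteq \Alg(\vdash')$ and the fact (from~\cite[Lemma~3.8]{Ja06a}, as used in the proof of Lemma~\ref{lem:wishfu}) that reduced models of semilattice-based logics have lattice filters as their designated sets, gives that $F$ is a lattice filter of the algebra $\A$, and $\A \in \Alg(\vdash') \subseteq \SDM$; moreover, since $\vdash'$ extends $\vdash^{\leq}_{\K}$, the algebra reduct lies in $\K$ (here one uses Theorem~\ref{thm:ramon}, or more directly that $\Alg(\vdash^{\leq}_{\K}) = \K$ for varieties $\K$, which follows from the dual isomorphism). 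Then Lemma~\ref{lem:wishfu} applies verbatim to conclude $F = \{\top\}$. Hence the class of reduced models of $\vdash'$ coincides with $\{\la \A, \{\top\}\ra : \A \in \K\}$, and since every logic is complete with respect to its reduced models, $\vdash' = \vdash^{\top}_{\K}$.

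The one point that requires a little care is the claim that the algebra reduct of a reduced model of $\vdash'$ lies in $\K$ and not merely in $\SDM$. This is where I would invoke that $\vdash^{\leq}_{\K}$ is, by Theorem~\ref{thm:ramon} and the general theory of semilattice-based logics, the order-preserving logic of exactly the variety $\K$, so $\Alg(\vdash^{\leq}_{\K}) = \K$; since adding $\Ru_\top$ does not change the underlying class of algebras (the rules in $\Ru_\top$ do not correspond to inequalities, as remarked before Table~\ref{tab:sd2}, their role being only to force $F = \{\top\}$), we still have $\Alg(\vdash') = \K$. Alternatively, and perhaps more cleanly, one observes that a reduced model $\la \A, \{\top\}\ra$ of $\vdash'$ in particular is a model of $\vdash^{\leq}_{\K}$, and for an order-preserving logic this forces all the equations of $\K$ to hold in $\A$ (an inequality $\phi \leq \psi$ valid in $\K$ gives the rule $\phi \vdash^{\leq}_{\K} \psi$, and in the matrix $\la \A, \{\top\}\ra$ soundness of this rule together with $F$ being a lattice filter forces $\phi^{\A} \leq \psi^{\A}$). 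Thus $\A \in \K$, completing the argument.

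I expect the main obstacle to be nothing more than bookkeeping about which abstract algebraic logic facts (reduced models, $\Alg$ versus $\Alg^*$, the behavior of $\Leibniz$) may be cited directly; the mathematical content is essentially packaged in Proposition~\ref{prop:leibn} and Lemma~\ref{lem:wishfu}, so the proof of Theorem~\ref{thm:isubva} itself should be short, reducing to: (1) soundness of $\Ru_\top$ in the $\top$-matrices, (2) the structural observation that reduced models of the extended logic have the form $\la \A, \{\top\}\ra$ with $\A \in \K$, and (3) completeness with respect to reduced models.
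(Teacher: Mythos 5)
Your proposal is correct and follows essentially the same route as the paper: soundness is dispatched directly, and completeness is obtained by passing to a reduced matrix model of the strengthened logic, using Proposition~\ref{prop:alg} (together with $\Alg^*(\vdash)\subseteq\Alg(\vdash)$) to place its algebra reduct in $\SDM$, and then invoking Lemma~\ref{lem:wishfu} to force $F=\{\top\}$. The only difference is that you spell out in more detail why the algebra lies in $\K$ rather than merely in $\SDM$, a point the paper compresses into the identity $\Alg(\vdash^{\leq}_{\K})=\VV(\K)$.
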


\begin{proof}
Soundness is clear. For completeness, assume $\Gamma \not \vdash \phi $ where $\vdash$ is %calculus 
the strengthening of $\vdash^{\leq}_{\K}$ with $\mathsf{r_{WP}}$ and $\mathsf{r_{Q}}$.
%\eqref{R:WP}$+$\eqref{R:Q}.
Then there is a reduced matrix model $\la \A, F \ra$ of $\vdash$
%$\vdash^{\leq}_{\K}+\mathsf{r_{WP}}+\mathsf{r_{Q}}$.
%$\eqref{R:WP}$+$\eqref{R:Q}
witnessing this. Moreover, $\A \in \Alg (\vdash^{\leq}_{\K}) = \VV (\K) \subseteq \SDM$
(cf.~Proposition~\ref{prop:alg}). So we can invoke Lemma~\ref{lem:wishfu} to 
obtain $F = \{ \top \}$.
Hence, $\Gamma \not \vdash^{\top}_{\K} \phi$, as required.
\end{proof}

Taking into account Theorem~\ref{thm:compl}, the preceding Theorem immediately gives us the following.

\begin{corollary}
\label{cor:allfin}
For every  $\K \subseteq \SDM$, 
if $\vdash^{\leq}_{\K}$ is finitely based, then so is
$\vdash^{\top}_{\K}$.
\end{corollary}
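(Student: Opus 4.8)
The plan is to read this off directly from Theorem~\ref{thm:isubva} together with the trivial observation that $\Ru_\top$ is a finite set of rule schemata. First I would invoke the hypothesis: $\vdash^{\leq}_{\K}$ is finitely based, so it is axiomatized by some finite Hilbert calculus $\Ru$. Next, by Theorem~\ref{thm:isubva}, the logic $\vdash^{\top}_{\K}$ is axiomatized relatively to $\vdash^{\leq}_{\K}$ by $\Ru_\top$; unwinding what this means, a Hilbert calculus for $\vdash^{\top}_{\K}$ is obtained simply by adjoining the rules of $\Ru_\top$ to any Hilbert calculus for $\vdash^{\leq}_{\K}$. Hence $\Ru \cup \Ru_\top$ axiomatizes $\vdash^{\top}_{\K}$.

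The last step is to note that $\Ru_\top = \{\mathsf{r_{WP}}, \mathsf{r_Q}\}$ (Table~\ref{tab:sd2}) consists of exactly two rule schemata, so $\Ru \cup \Ru_\top$ is finite whenever $\Ru$ is. This shows $\vdash^{\top}_{\K}$ is finitely based, as required. I would also add a sentence pointing out that, combining this with Theorem~\ref{thm:compl} (and, where relevant, Theorem~\ref{th:subva}), one in fact gets an \emph{explicit} finite calculus for $\vdash^{\top}_{\K}$ whenever $\K$ is a finitely axiomatized subvariety of $\SDM$: take $\Ru^{\Eq}_{+}\cup \Ru_{\bullet}\cup \Ru_{\mathsf{F}}\cup \Ru_\top$ for a finite equational basis $\Eq$ of $\K$.

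I do not anticipate any genuine obstacle here, since all the substantive work has already been carried out in establishing Theorem~\ref{thm:isubva} (which rests on Proposition~\ref{prop:leibn} and Lemma~\ref{lem:wishfu}). The only point deserving a moment's care is the phrase ``axiomatized relatively to'': I would make explicit that relative axiomatizability by a \emph{finite} set of rules is preserved under passing from a finite base calculus to the union, so that finiteness is genuinely inherited. Everything else is immediate.
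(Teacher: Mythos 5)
Your proof is correct and follows exactly the paper's route: the paper likewise derives the corollary immediately from Theorem~\ref{thm:isubva} (relative axiomatization by the finite set $\Ru_\top$) together with the finite calculus provided by Theorem~\ref{thm:compl}. Nothing further is needed.
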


\section{Concluding remarks}
\label{sec:conc}

%{\ {blue} 
The present paper has been a contribution to improving our current  understanding of the expressivity of Hilbert calculi. 
As observed earlier, %when dealing with order-preserving logics that correspond to (finitely based) varieties of algebras,
Gentzen calculi allow one to impose directly the meta-properties  needed to ensure that the inter-derivability relation is a congruence
of the formula algebra.
By contrast, we have shown that  under certain conditions
this is beyond what Hilbert calculi can capture finitely. 
Our main
results
%concerning the finite axiomatizability of logics 
 are displayed in Table~\ref{tab:sum} below. % summarizes 

\begin{table}[!h]

\begin{center}
\begin{tabular}{ c | c | c | c }
Conditions on $\Eq$  & $\vdash^\leq_{\VV_\Eq}$ & $\vdash^\top_{\VV_\Eq}$ & Examples%\smallskip 
\smallskip\\ 
  \hline
%$\mathsf{bal}(\Eq)$
$\neg$-balanced, $\VV_\Eq\subseteq \DN$ %$\BA\subseteq\VV_\Eq\subseteq \DN$ 
and %exists $k<\omega$ such that 
$\neg^k p \,\not\vdash^{\leq}_{\VV_\Eq} \neg^k q$
%$\frac{\neg^k p}{\neg^k q}\,\notin\, \vdash^\leq_{\VV_\Eq}$ %and $\BA\subseteq\VV_\Eq\subseteq \DN$ 
 & ${\bf N}$ & ? & $\DN$, $\OC$\\  
 finite and $\VV_\Eq\subseteq \SDM$ & ${\bf Y}$ & ${\bf Y}$ & $\SDM$, $\PDL$   \\
  finite and $\VV_\Eq\subseteq \OC^m_n$ & ${\bf Y} $& ? &     $\OC^m_n$, $\DM$\\
\end{tabular}
\end{center}

 \caption{Finite axiomatizability results.} \label{tab:sum}
 \end{table}

On the front of positive results, we have identified certain subvarieties of $\DN$
for which Hilbert calculi are indeed able to reflect finitely the effect of 
 imposing extra equations on the algebras.
The well-known result that finitely-generated varieties of lattices
are finitely based~\cite[Cor.~V.4.18]{BuSa00} implies that our methods
may be successfully applied to every finite-valued order-preserving logic
that extends $\vdash^{\leq}_{\SDM}$. We believe it would be interesting to 
%study tighter 
take a closer look at the conditions that characterize this divide.

Yet another  approach to the axiomatization of logics, which is intermediate %in terms of expressive power
between Hilbert and Gentzen, is provided by multiple-conclusion calculi.
These are an extension of traditional (single-conclusion) Hilbert calculi 
where rules may have non-singleton sets of conclusions (which are read disjunctively).
%Unlike with Gentzen systems, 
With
multiple-conclusion calculi one gains a considerably greater expressive power without 
expanding the signature with metalinguistic symbols as happens with Gentzen systems.
For instance, it is known that  every finite-valued 
logic is finitely axiomatizable by multiple-conclusion calculi, and 
 desirable proof-theoretical properties (e.g.~analiticity, effective proof search) are more easily established for the latter
than for the their single-conclusion counterparts (see e.g.~\cite{ShSm78,CaMa19,CaMaXX}).
We speculate whether the logics we have shown to be non-finitely based (by means of single-conclusion Hilbert calculi)
might be axiomatizable by means of a finite multiple-conclusion calculus (as happens, for instance,
with the logic defined by Wro\'nski's three-element matrix: see \cite{Wr79,CaMaXX}).

% {\ {blue} HERE WE NEED TO SAY WHAT WE EXPECT FROM M-C CALCULI WRT SEMIDEMORGAN LOGICS}
%
A related question is whether logics of distributive lattices with negation that are not given by any
finite set of finite matrices
may admit some finite non-deterministic partial matrix semantics (see~\cite{Avron05,Baaz13,charfinval,soco,CaMaAXS}). 

A last research direction worth mentioning is the study of logics defined from classes  of distributive lattices
with negation through different choices of the designated elements. 
As we have seen earlier, one such choice yields $\top$-assertional logics associated to subvarieties of
$\DN$. 
In this respect, we speculate whether the finite axiomatizability result
obtained in Section~\ref{sec:top} for
$\vdash^{\top}_{\SDM}$ might be extended to
other logics (e.g.~$\vdash^{\top}_{\DN}, \vdash^{\top}_{\OC}, \vdash^{\top}_{\OC^m_n}$).

\end{document}